\def\N{\mathbb N}
\def\R{\mathbb R}
\def\RR{\mathcal{R}}
\def\CC{\mathcal{C}}
\def\mc#1{{\mathcal #1}}
\newcommand{\yy}{{\bf y}}
\newcommand{\zz}{{\bf z}}
\newcommand{\ww}{{\bf w}}
\newcommand{\wt}[1]{{\widetilde{#1}}}
\newcommand{\eps}{\varepsilon}
\newcommand{\g}{\gamma}
\newcommand{\Ga}{\Gamma}
\theoremstyle{plain}
\newtheorem{theorem}{Theorem}[section]
\newtheorem{lemma}[theorem]{Lemma}
\newtheorem{definition}[theorem]{Definition}
\newtheorem{corollary}[theorem]{Corollary}
\newtheorem{proposition}[theorem]{Proposition}
\theoremstyle{definition}
\newtheorem{remark}[theorem]{Remark}
\theoremstyle{plain}
\newtheorem{claim}[theorem]{Claim}
\newtheorem{example}[theorem]{Example}
\numberwithin{equation}{section}
\begin{document}

%------
% Insert the title of your paper and (if necessary)
% a short title for the running head.
%------
\title[Solutions of ODE with regular separation: interlacement vs Hardy]{Solutions of definable ODEs with regular separation and dichotomy interlacement versus Hardy}
%\titlemark{Non-interlaced solutions and Hardy Fields}

%------
% Insert full names of the authors.
% Add further authors as follows:
%  \emsauthor{2}{}{}
%  \emsauthor{3}{}{}
% etc.
% Abbreviate first names for the running head.
%------
\author{Olivier Le Gal}
\address{Laboratoire de Math{\'e}matiques, Universit{\'e} de Savoie,	B{\^a}timent Chablais, Campus Scientifique, 73376 Le Bourget-du-Lac	Cedex, France }\email{olivier.le-gal@univ-savoie.fr}
  \author{Micka\"el Matusinski}
 \address{Univ. Bordeaux, CNRS, Bordeaux INP, IMB, UMR 5251, F-33400 Talence, France} \email{mickael.matusinski@math.u-bordeaux.fr}
 \author{Fernando Sanz S\'anchez}
\address{Dept. \'{A}lgebra, An\'{a}lisis Matem\'{a}tico, Geometr\'{\i}a y Topolog\'{\i}a, Instituto de Matemáticas de la UVa (IMUVA), Universidad de Valladolid, Spain}\email{fsanz@agt.uva.es}
%------
% Use \authormark if the list of authors is too
% long for the running head: \authormark{A.~Doe et al.}
%------

%------
% Add one \emsaffil and one \email for each author.
%------

%------
% Add MSC 2020 codes according to www.ams.org/msc/msc2020.html.
% Secondary codes (in square brackets) are optional.
%------
\subjclass[2010]{34D05, 32B20, 14P15, 34C10, 03C64}

%------
% Add a list of keywords.
%------
%\subjclass[2010]{13J05, 13F25, 14J99  and 12Y99}
\keywords{solutions of ODEs, non-oscillating trajectories of vector fields, o-minimality, Hardy field, transcendental formal solutions}

%------
% Insert your abstract.
%------
\begin{abstract}
We introduce a notion of regular separation for solutions of systems of ODEs $y'=F(x,y)$, where $F$ is definable in a polynomially bounded o-minimal structure and $y=(y_1,y_2)$. Given a pair of solutions with flat contact, we prove that, if one of them has the property of regular separation, the pair is either interlaced or generates a Hardy field. We adapt this result to trajectories of three-dimensional vector fields with definable coefficients. In the particular case of real analytic vector fields, it improves the dichotomy interlaced/separated of certain integral pencils {obtained by F. Cano, R. Moussu and the third author}. In this
context, we show that the set of trajectories with the regular separation property and asymptotic to a formal invariant curve is never empty and it is represented by a subanalytic set of minimal dimension containing the curve. Finally, we show how to construct examples of formal invariant curves which are transcendental with respect to subanalytic sets, using the so-called (SAT) property {introduced by J.-P. Rolin, R. Shaefke and the third author}.
\end{abstract}

\maketitle

\section{Introduction.}\label{sec:preliminaries}

We consider a system of two ordinary differential equations
\begin{equation}\label{SF}\tag{${S}_F$}
\bigg\{	\begin{aligned}
	{y_1}'&=&f_1(x,y_1,y_2)\\
	{y_2}'&=&f_2(x,y_1,y_2)
	\end{aligned} 
\end{equation} 
%\[ (S_F)\ \ \ \ \left\{\begin{array}{lcl}
%	{y_1}'&=&f_1(x,y_1,y_2)\\
%	{y_2}'&=&f_2(x,y_1,y_2)
%\end{array}\right.\]
where $F=(f_1,f_2) : \Omega\to\mathbb R^2$ is a $C^1$ map
on some open set $\Omega\subset \mathbb R_+\times\mathbb R^2$ with $(0,0,0)\in\overline{\Omega}$. A \emph{solution at $0$ of} \eqref{SF} (sometimes called simply a  \emph{solution}) is a $C^1$ map $\gamma:(0,a)\to\mathbb R^2$, whose graph is contained in $\Omega$, and satisfying the system \eqref{SF}. We are interested in this article in the relative behavior of two distinct solutions of \eqref{SF}, following similar studies addressed in various contexts, in particular by M. Rosenlicht \cite{Ros} and M. Boshernitzan \cite{Bos} in the setting of Hardy fields, and by F. Cano, R. Moussu and F. Sanz for analytic vector fields \cite{Can-Mou-San1, Can-Mou-San2}.

The possible relative behaviors of the solutions strongly depend on the nature of the function $F$. When \eqref{SF} is an autonomous linear system, i.e. $F$ is a linear map in $(y_1,y_2)$ and does not depend on the variable $x$, C. Miller \cite{Mil:traj} and M. Tychonievic \cite{Tyc} give a complete classification of the solutions in model theoric terms, which refines the classical dichotomy ``spiraling'' versus ``non-oscillating'' for trajectories of planar vector fields.
If $F$ is a linear map in $(y_1,y_2)$ whose coefficients are functions of $x$ definable in an o-minimal structure, O. Le Gal, F. Sanz and P.  Speissegger \cite{Leg-San-Spe} show that either any {two given} solutions of \eqref{SF} {are interlaced}, or all solutions belong to a common o-minimal structure. 
This confirms the following heuristic statement \eqref{equH} (for ``tame'' differential systems) that can already be pulled out from \cite{Can-Mou-San1,Can-Mou-San2}: 
%, and that generalizes somehow the aforementioned dichotomy to the context of analytic vector fields in dimension 3:
%\[ \text{(H) 
	\begin{equation}\label{equH}
		\tag{H}\textsl{Relative oscillation of two solutions requires infinite twisting.}
	\end{equation}%\] 	
Here we focus on systems \eqref{SF} that we call \emph{definable systems}, where $F$ is a tame function in the broader sense of o--minimal geometry  \cite{vdd-mil, Cos}.  More precisely, $F$ is assumed to be a function definable in a given \emph{o-minimal and polynomially bounded expansion $\mathcal R$ of the field of real numbers} (see {the following} section for the specific properties that we will use here). {From now on, definability {will} always refer to definability with parameters in the structure $\mathcal R$}.  Let us mention that this comprises the aforementioned contexts of linear, semialgebraic or globally subanalytic functions. We introduce two notions in this context: regular separation -- inspired by a similar property developed by {S. \L{}ojasiewicz} in semi-analytic geometry, see for instance \cite{Loj} --, and flat contact.

\begin{definition}\label{def:regular-flat}~
	
	\begin{enumerate}
		\item {Let $\gamma $ be a  solution of  a definable system \eqref{SF}. We say that $\gamma$ has} \emph{regular separation property} (with respect to $\mathcal R$) if for any {map} $f:\R^3\to\R$ definable in $\RR$ whose domain contains the graph of $\gamma$, there exist{s} a {real number} $a>0$ such that {either 
			\[  \forall x\in (0,a),\, f(x,\gamma(x))=0 \]
			or  \[   \exists k\in\mathbb N, \forall x\in (0,a),\; |f(x,\gamma(x))|\geq x^k.\]}
		\item {Let $\gamma,\delta$ be two different  solutions of  a definable system \eqref{SF}, and let $\eps:=\delta-\g$.} $\gamma$ and $\delta$ are said to have {\em flat contact} if:\[ \forall k\in\mathbb N,\, \exists a >0,\, \forall x\in(0,a),\; \|\eps(x)\|\leq x^k.\]
	\end{enumerate}
\end{definition}
\noindent 

In order to give a precise statement, let us introduce, for a given pair of solutions, the notion of interlacement and the associated ring of definable germs.

\begin{definition}\label{def:interlaced-separated}
	Let $\gamma,\delta$ be two different  solutions of \eqref{SF}, and $\eps:=\delta-\g$.
	\begin{enumerate}
		\item We say that $\gamma$ and $\delta$ are \emph{interlaced} if the plane curve  $x\mapsto \eps(x)$, for $x\in(0,a)$ and $a>0$, spirals infinitely around the origin. In other terms, any continuous measure $\theta:(0,a)\to\mathbb R$ of the angle between $(1,0)$ and $\Theta(x):=\frac{\varepsilon(x)}{||\varepsilon(x)||}$ satisfies $\lim_{x\to 0} \theta(x)=+\infty\text{ or }-\infty$.
		\item We call \emph{ring of definable germs over $\gamma,\delta$} the ring $\mathcal F(x,\gamma,\delta)$ consisting of the germs at $0^+$ of  compositions $x\mapsto f(x,\gamma(x),\delta(x))$ with $f$ ranging over all definable functions from $\mathbb R^5$ to $\mathbb R$ whose domain contains the graph of $x\mapsto (\gamma(x),\delta(x))$.
		
	\end{enumerate}
\end{definition}

Let us recall that a Hardy field is a subring of the ring of germs of functions $h:\R\to\R$ at $0^+$, which is a field and is closed under the natural derivation \cite{Ros}.
The main result of the present paper is the following one. It provides another instance of the heuristic principle \eqref{equH} in the context of definable systems of differential equations.

\begin{theorem}\label{th:main}
	Let \eqref{SF} be a differential system {as above} where $F$ is definable in a polynomially bounded o-minimal structure $\mathcal R$ expanding $\R$, and let $\gamma$ be a solution at 0 of \eqref{SF} {which} has the regular separation property. Let $\delta$ be another solution of \eqref{SF} which has flat contact with $\gamma$. Then, either $\mathcal{F}(x,\gamma,\delta)$ is a Hardy field, or else $\gamma$ and $\delta$ are interlaced.
\end{theorem}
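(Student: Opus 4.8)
The plan is to reduce the comparison of $\gamma$ and $\delta$ to a scalar first--order differential equation for the angle of $\varepsilon:=\delta-\gamma$, with coefficients in a Hardy field attached to $\gamma$ alone and a \emph{flat} perturbation produced by flat contact. The first step is to check that $H:=\mathcal F(x,\gamma)$, the ring of germs $x\mapsto f(x,\gamma(x))$ with $f$ definable, is a polynomially bounded Hardy field. Closure under derivation is automatic, since $\gamma$ solves \eqref{SF}: $(f(x,\gamma(x)))'=(\partial_x f+\nabla_y f\cdot F)(x,\gamma(x))\in H$. Regular separation of $\gamma$ gives that a nonzero germ $f(x,\gamma(x))$ satisfies $|f(x,\gamma(x))|\ge x^k$ eventually, hence --- by continuity --- has a definite sign, so $H$ is a field, with $(1/f)(x,\gamma(x))$ inverting $f(x,\gamma(x))$; applied to the distance to the non-$C^1$ locus of $f$, together with a cell--decomposition argument for the case where the graph of $\gamma$ lies eventually in that locus (in which case $\gamma$, or a projection of the system, is itself definable and one concludes or descends), it also shows that every element of $H$ is a $C^1$ germ. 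Finally, $\mathcal R$ being polynomially bounded, each element of $H$ is eventually $\sim c\,x^r$; in particular $H$ is closed under square roots of eventually positive elements.

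Since $F$ is locally Lipschitz in $y$, uniqueness for \eqref{SF} forces $\varepsilon(x)\ne0$ throughout, so $\rho:=\|\varepsilon\|>0$, $\Theta:=\varepsilon/\rho\in\mathbb S^1$, and a $C^1$ determination $\theta$ of the angle of $\Theta$ are well defined --- and $\gamma,\delta$ are interlaced exactly when $\theta(x)\to\pm\infty$. From $\varepsilon'=F(x,\gamma+\varepsilon)-F(x,\gamma)$ and Taylor's formula, $\varepsilon'=M(x)\varepsilon$ with $M(x)=\int_0^1 D_yF(x,\gamma(x)+t\varepsilon(x))\,dt=A(x)+R(x)$, where $A(x)=D_yF(x,\gamma(x))$ has entries in $H$ and, by flat contact together with uniform continuity of $D_yF$ near the graph of $\gamma$ (a definable modulus of continuity vanishing at $0$ is bounded by a positive power of its argument; the graph of $\gamma$ may be assumed to lie at polynomial distance from the non-smooth locus of $F$ after the reductions above, else one descends), the remainder $R$ is flat: $\|R(x)\|\le x^k$ eventually, for all $k$. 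Projecting $\varepsilon'=M\varepsilon$ onto $\Theta$ and $\Theta^\perp$ gives
\[
\theta'=\eta+\alpha\sin 2\theta+\beta\cos 2\theta+\wt R_1,
\]
together with an analogous equation for $\rho'/\rho$ with flat remainder, where $\eta,\alpha,\beta\in H$ are built from the entries of $A$. Put $\Delta:=\eta^2-\alpha^2-\beta^2\in H$, eventually of definite sign.

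The main step --- and the main obstacle --- is the dichotomy for $\theta$: \emph{if $\gamma,\delta$ are not interlaced, then $\theta$ has a finite limit $\theta_0$ and $\theta-\theta_0$ has a definite sign.} If $\Delta>0$ eventually (rotation dominant), then $|\eta|>\sqrt{\alpha^2+\beta^2}$, so $\theta'$ has the constant sign of $\eta$ --- the flat $\wt R_1$ being dominated by $|\eta|-\sqrt{\alpha^2+\beta^2}\sim c\,x^r$ --- hence $\theta$ is eventually monotone with a limit in $\R\cup\{\pm\infty\}$, and non-interlacement excludes $\pm\infty$. If $\Delta\le0$ eventually, the equilibria of $\Phi\mapsto\eta+\alpha\sin 2\Phi+\beta\cos 2\Phi$ split (mod $\pi$) into two families whose tangents are the roots of a quadratic over $H$ --- hence they lie in a Hardy field extension $\wh H\supseteq H$ --- one family attracting and one repelling as $x\to 0^+$, the linearization along each being $\sim c'x^{r'}$ of definite sign; a barrier/trapping argument, in which this definite--sign linearization dominates both the flat perturbation $\wt R_1$ and the drift $\phi'$ of the equilibria, shows that $\theta$ crosses the attracting family only finitely often, is thereafter confined to a band of width $<\pi$, and converges there; in particular $\theta$ is bounded, so $\gamma,\delta$ are not interlaced and $\theta-\theta_0$ has a definite sign. (Degenerate cases --- a double root $\Delta\equiv0$ of the quadratic, or an equilibrium family at $\theta\equiv\pi/2\ (\mathrm{mod}\ \pi)$ --- require minor separate normalizations.) This step is the nonlinear, flatly perturbed analogue of the angular analysis for linear definable systems, the delicate point being to rule out bounded oscillation of $\theta$ while keeping its approach to $\theta_0$ tame.

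Assume now $\gamma,\delta$ not interlaced. By the previous step $\theta\to\theta_0$ with $\theta-\theta_0$ of definite sign; integrating the $\rho$--equation against the now--controlled $\theta$ does the same for $\log\rho$, and one verifies that $\theta$, $\log\rho$ together with $H$ (and $\wh H$ when $\Delta\le0$) generate a Hardy field $H_1$, each newly produced germ having leading behaviour in a Hardy field and flat remainder, hence a definite sign. Finally, any element of $\mathcal F(x,\gamma,\delta)$ is a germ $\tilde f\big(x,\gamma(x),\rho(x)\cos\theta(x),\rho(x)\sin\theta(x)\big)$ with $\tilde f$ definable; expanding $\tilde f$ in the flat variable $\varepsilon$ about $\varepsilon=0$, the constant term lies in $H$ and, if nonzero, is $\ge x^k$ by regular separation of $\gamma$, hence dominates the flat higher--order terms and the germ has a definite sign; if the constant term vanishes identically one passes to the first non-vanishing homogeneous part $\rho^{\,j}P_j(x,\gamma;\cos\theta,\sin\theta)$ and uses $\rho>0$, the convergence of $\theta$, and membership of the relevant coefficients in $H_1$ to conclude likewise. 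Hence $\mathcal F(x,\gamma,\delta)\subseteq H_1$ is a field of $C^1$ germs closed under derivation --- a Hardy field --- which completes the proof.
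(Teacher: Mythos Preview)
Your strategy---reducing to a Riccati--type angular equation $\theta'=\eta+\alpha\sin 2\theta+\beta\cos 2\theta+(\text{flat})$ with coefficients in the Hardy field $H=\mathcal F(x,\gamma)$---is genuinely different from the paper's and closer in spirit to the linear case of \cite{Leg-San-Spe}. The paper instead proves the contrapositive directly via a $C^1$ cell decomposition of $\R^5$ adapted to the sign of a given germ $h$: regular separation and flat contact pin the curve $x\mapsto(x,\gamma(x),\varepsilon(x))$ to a thin tube of cells around the set $\{z=0\}$ over the graph of $\gamma$, and a Rolle argument (Proposition~\ref{prop:0}) applied to the definable ``walls'' $z_2=\psi(x,y,z_1)$ of that tube shows that if the germ changes sign infinitely often then so does $\varepsilon_1$, which then forces $\theta\to\pm\infty$. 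No Taylor expansion is used, and no angular ODE is analyzed beyond this last step.

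There is, however, a genuine gap in your final step. To show that every germ $\tilde f(x,\gamma,\varepsilon)$ has an ultimate sign you Taylor--expand in the flat variable $\varepsilon$ and pass to a ``first non-vanishing homogeneous part'' $\rho^{\,j}P_j(x,\gamma;\cos\theta,\sin\theta)$. But the hypotheses only guarantee $C^1$ cell decomposition, so $\tilde f$ need not be $C^j$ for any $j\ge 2$ near the relevant curve; even granting smooth cells, $P_j$ is a trigonometric polynomial in $\theta$ with coefficients in $H$, and when its value at $\theta_0$ vanishes you are thrown into a further, uncontrolled recursion in $\theta-\theta_0$. The stated inclusion $\mathcal F(x,\gamma,\delta)\subseteq H_1$ is in any case too strong: a general definable five--variable function composed with $(x,\gamma,\rho\cos\theta,\rho\sin\theta)$ has no reason to land in the Hardy field generated by $H$, $\theta$ and $\log\rho$. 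A secondary gap sits in your trapping argument for $\Delta\le 0$: since $\phi'=-\partial_x g/\partial_\Phi g$ and the denominator $\partial_\Phi g$ is precisely your linearization $\sim c'x^{r'}$, the drift $\phi'$ is typically \emph{larger} than the linearization, not dominated by it; a Rolle argument at crossings $\theta=\phi$ (where $\theta'$ is flat but $\phi'$ has definite sign) can still give finitely many crossings, but the convergence of $\theta$ and the definite sign of $\theta-\theta_0$ in the degenerate sub-cases you flag are not ``minor normalizations''. The paper sidesteps all of this by never expanding $\tilde f$ and by proving only that each germ has an ultimate sign, cell by cell.
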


We prove Theorem \ref{th:main} in Section \ref{sec:proof-main}.

In Section \ref{sec:trajectories}, we adapt Theorem \ref{th:main} to trajectories of three--dimensional definable vector fields. {In this way, we partially generalize the results about} non-oscillating trajectories of analytic vector fields by F. Cano, R. Moussu and the third author \cite{Can-Mou-San1,Can-Mou-San2}. The main result is Theorem \ref{th:defvf}{. Under the hypothesis that one of the trajectories has regular separation property, it provides} a more precise version of the dichotomy ``enlac\'e'' (our {present} notion of interlacement) versus ``s\'epar\'e'' established in \cite{Can-Mou-San2} for a pair of trajectories having flat contact.

In Section \ref{sec:integral-pencils}, we focus on the family $\mathcal{P}_{\mathcal{C}}$ of trajectories of an analytic vector field which are asymptotic to a given formal curve $\mathcal{C}$. Our objective is to describe the subset $\mathcal T$ of trajectories in $\mathcal{P}_{\mathcal{C}}$ which have regular separation {property}. We {realize} {$\mathcal T$ by determining} a subanalytic set $S$ which contains a representative of a trajectory if and only if it belongs to $\mathcal T$, and prove  that $\mathcal T$ is never empty (Theorem \ref{pro:formal-axis}).
The dimension of $S$ measures the transcendence of $\mathcal{C}$ with respect to subanalytic sets, and we give some implications of our result in the two cases where dim$(S)=1$ (analytic axis), and dim$(S)=3$ (transcendent axis).

{Accordingly, analytic vector fields having a transcendent formal invariant curve  provide integral pencils of maximal dimension for which all members satisfy the hypothesis of Theorem \ref{th:main}. The purpose of the last Section \ref{sec:examples} is to exhibit examples of this kind.} 
{Even if} this case is probably generic, it is not {easy} to prove that a particular formal curve is  transcendent with respect to subanalytic sets. We derive {this property} from a strong analytic transcendence condition (SAT) introduced and studied in \cite{Rol-San-Sch}. %we give several examples that show different situations for regular separation, and how our theorem applies. Notably...

%--------------------------------------------- Section 2 : Preuve

\section{Proof of Theorem~\ref{th:main}.}\label{sec:proof-main}

%-------------2.1 Prelim

\subsection{Preliminary properties.}

Since we  deal with properties of germs, we often need to restrict to smaller domains. In {order to ease the reading}, we interchangeably use the expressions ``ultimately'',  ``for small $x$'', ``for $x$ close to $0$'', that must be understood as ``there exists a positive {real} $a$ such that, for $x\in(0,a)$''. {Likewise}, a curve $c:(0,a)\to M$ ``meets infinitely many times'' a subset $X\subset M$ if there is an infinite sequence $(t_n) \in (0,a)^{\mathbb N}$ {tending to} $0$ such that $c(t_n)\in X$.

We assume the reader to be familiar with the basic notions and properties of o-minimal geometry as presented e.g. in \cite{vdd-mil,Cos}. We consider a polynomially bounded o-minimal structure $\mathcal R$ expanding the field of real number $\R$. Recall that a function is called definable if its graph is a definable set. 
For instance, the {characteristic} function of a definable set $X$, this is, the map $\mathds{1}_X$
defined by $\mathds{1}_X(x)=1$ if $x\in X$, $\mathds{1}_X(x)=0$ if $x\notin X$, is a definable function. 
The sign function $\mathbb R\ni x\mapsto \mathrm{sign}(x)\in\{+,-,0\}$ is {also} definable, up to identifying $\{+,-,0\}$ with $\{1,-1,0\}$ in the obvious way.
We will {repeatedly} use the fact that various operations {like} taking the supremum, passing to the limit or differentiating, produce definable functions as soon as their entries are definable. 
The main tool of o-minimal geometry we use is the existence of a \emph{cell decomposition} for definable sets.
We refer the reader to \cite{vdD} for details, and to \cite[Theorem 6.7]{Cos} for the particular version of the {Cell Decomposition Theorem that} we use here. 
We denote by $||\cdot||$ the Euclidean norm on $\mathbb R^n$.\\

The following lemma shows that regular separation implies {non-oscillation} with respect to definable sets:  ultimately, a curve that has regular separation {property} is either included in a definable set $X$ or else does not meet $X$.
\begin{lemma}\label{lem:1}
	Let $\gamma$ be a solution of a definable system \eqref{SF} which has the regular separation property, and $X$ a definable subset of $\mathbb R^3$. Suppose that there exists an infinite sequence $(x_n)\to 0$ such that $(x_n,\gamma(x_n))\in X$. Then there exists $a>0$, such that $X$ contains the graph of $\gamma$ for $x<a$.
\end{lemma}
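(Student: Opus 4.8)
The plan is to exploit the regular separation property applied to a cleverly chosen definable function built from $X$, rather than to $X$ itself directly. Set $f := \mathrm{dist}(\,\cdot\,, X)$, the Euclidean distance to the set $X$; since $X$ is definable, $f : \R^3 \to \R$ is a definable function whose domain (all of $\R^3$) contains the graph of $\gamma$. By the regular separation property applied to $f$ and $\gamma$, there is $a>0$ such that either $f(x,\gamma(x)) = 0$ for all $x\in(0,a)$, or there is $k\in\mathbb N$ with $|f(x,\gamma(x))| \ge x^k$ for all $x\in(0,a)$.

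In the first case, $f(x,\gamma(x)) = \mathrm{dist}((x,\gamma(x)),X) = 0$ for all $x \in (0,a)$; since $X$ may not be closed, this gives that $(x,\gamma(x)) \in \overline X$ for $x<a$. To get $(x,\gamma(x)) \in X$ itself, I would replace this single application with a two-step argument: first run the above with $X$ to land ultimately in $\overline{X}$, then run it again with the definable set $\overline{X}\setminus X$ (which has dimension strictly smaller than $\dim \overline X \le 3$); since the curve does not meet the smaller set near $0$ unless it is contained in it, a downward induction on the dimension of the ``bad'' set $\overline X \setminus X$ terminates and yields $(x,\gamma(x)) \in X$ for small $x$, as desired. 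The base case $\dim = -1$ (empty set) is trivial. This is the conclusion we want.

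So the real work is to rule out the second alternative. Suppose for contradiction that $|f(x,\gamma(x))| = \mathrm{dist}((x,\gamma(x)),X) \ge x^k$ for all $x\in(0,a)$. But by hypothesis there is a sequence $x_n \to 0$ with $(x_n,\gamma(x_n)) \in X$, hence $\mathrm{dist}((x_n,\gamma(x_n)),X) = 0$ for all $n$. Choosing $n$ large enough that $x_n < a$ gives $0 = \mathrm{dist}((x_n,\gamma(x_n)),X) \ge x_n^k > 0$, a contradiction. Hence the second alternative is impossible, and we are necessarily in the first case.

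The main obstacle, as flagged above, is the gap between $\overline X$ and $X$: regular separation as stated only controls the vanishing locus of definable functions, and $\mathrm{dist}(\cdot,X)$ vanishes on all of $\overline X$, not just on $X$. The induction on $\dim(\overline X \setminus X)$ handles this cleanly because the difference $\overline{X}\setminus X$ is again definable and of strictly smaller dimension (a standard fact from o-minimal geometry, e.g. via cell decomposition), so after finitely many steps the residual ``frontier'' set is empty. An alternative, perhaps slicker, route avoiding the induction is to apply regular separation directly to the characteristic function $\mathds 1_X$, which is definable: then either $\mathds 1_X(x,\gamma(x)) = 0$ for all small $x$ — impossible since $\mathds 1_X(x_n,\gamma(x_n)) = 1$ — or $|\mathds 1_X(x,\gamma(x))| \ge x^k$ for small $x$; since $\mathds 1_X$ takes only the values $0$ and $1$, for $x$ small enough that $x^k > 0$ this forces $\mathds 1_X(x,\gamma(x)) = 1$, i.e. $(x,\gamma(x)) \in X$ for all $x$ in some interval $(0,a)$. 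This second approach is the one I would write up, as it is immediate and sidesteps the $\overline X$ versus $X$ subtlety entirely.
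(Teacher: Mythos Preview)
Your proposal is correct, and the approach you ultimately settle on---applying regular separation directly to the characteristic function $\mathds{1}_X$---is essentially the paper's own proof: the paper applies regular separation to $g = 1 - \mathds{1}_X$, rules out the lower bound $g\ge x^k$ because $g(x_n)=0$, and concludes $g\equiv 0$, which is the same argument with $\mathds{1}_X$ replaced by its complement. Your first route via the distance function and dimension induction would also work but is needlessly indirect, as you yourself observe.
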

\begin{proof}
	Set $g(x):=1-\mathds{1}_X(x,\gamma(x))$, where $\mathds{1}_X$ is the {characteristic} function of $X$. Then $g$ is the composition of a definable function with $\gamma$. It is either bounded from below by a power of $x$ or identically $0$. Since it vanishes on $(x_n)$, it cannot be bounded from below. So $g=0$ for small $x$, this means, $X$ contains the graph of $\gamma$ for small $x$.
\end{proof}

\begin{lemma}\label{lem:2}
	Let $\gamma$ be a solution of a definable system \eqref{SF} which has regular separation property, and $S:\mathbb R^4\to\mathbb R$ be a definable map.
	Then there exists $a>0$, $m>0$ and a sign $\alpha\in\{-,0,+\}$ such that,
	\[ \forall x\in (0,a),\; \forall z\in\mathbb R,\; 0<z<x^m\Rightarrow \mathrm{sign}(S(x,\gamma(x),z))=\alpha.\]
\end{lemma}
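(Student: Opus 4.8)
The idea is to reduce the two-variable statement (in $z$, plus the parameter $x$) to a one-variable statement in $x$ to which the regular separation property of $\gamma$, together with Lemma~\ref{lem:1}, can be applied. First I would fix the definable map $S:\R^4\to\R$ and consider, for each $x>0$, the definable sets
\[
Z_x^+:=\{z\in\R: 0<z<x \text{ and } \operatorname{sign}(S(x,\gamma(x),z))=+\},
\]
and similarly $Z_x^0$, $Z_x^-$ with signs $0$ and $-$; actually it is cleaner to work with $S$ evaluated at $(x,\gamma(x),z)$ but to keep everything definable I introduce the definable subset $A\subset\R^3$, $A:=\{(x,p,z): 0<z<x,\ \operatorname{sign}(S(x,p,z))=+\}$, and likewise $B$, $C$ for the other two signs. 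By the Cell Decomposition Theorem applied to $S$ (or rather to the partition of $\R^4$ according to the sign of $S$ intersected with $\{0<z<x\}$), for fixed $(x,p)$ the fiber $A_{(x,p)}=\{z:(x,p,z)\in A\}$ is a finite union of points and open intervals, with uniformly bounded number of pieces; the same holds for $B_{(x,p)}$ and $C_{(x,p)}$.

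Next I would use o-minimality uniformly in $x$: applying cell decomposition to the definable family $\{(x,z): (x,\gamma(x),z)\in A,\ 0<z<x\}\subset\R^2$ — here I first replace the parameter $p$ by $\gamma(x)$, which is legitimate because the graph of $\gamma$ is definable, so the composed set is definable — I get that, after shrinking to some interval $(0,a_0)$, this planar definable set is a finite union of cells over the $x$-axis. Crucially, over $(0,a_0)$ the number of cells is constant, and each cell is either a graph $z=\xi_i(x)$ of a definable function or a band $\xi_i(x)<z<\xi_{i+1}(x)$ between two such (with possibly $\xi_0\equiv 0$). The key quantitative input is that each boundary function $\xi_i(x)$, being definable and with values in $(0,x)$ wherever it is defined and nonzero, either is ultimately identically $0$, or is ultimately positive; in the latter case, since $\gamma$ has regular separation, the germ $x\mapsto \xi_i(x)$ (a composition of a definable function with $\gamma$) is either ultimately $0$ or ultimately bounded below by some $x^{k_i}$. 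So by taking $m$ larger than all the finitely many exponents $k_i$ arising this way, I ensure that for $x\in(0,a)$ the whole interval $0<z<x^m$ lies strictly below every boundary function $\xi_i(x)$ that is not ultimately zero, and avoids those that are ultimately zero (which contribute only $z=0$, excluded by $0<z$). Hence for each fixed small $x$, the interval $(0,x^m)$ is contained in a single one of the innermost cells of the sign-partition of $\{0<z<x\}$ — i.e. $\operatorname{sign}(S(x,\gamma(x),z))$ is constant on $z\in(0,x^m)$, equal to the sign $\alpha_x$ associated to that innermost cell.

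It remains to see that $\alpha_x$ does not depend on $x$ (for small $x$). Here I invoke Lemma~\ref{lem:1}: the set $X:=\{(x,p)\in\R^2\times\R\ \text{(abusively)}\}$ — more precisely, consider the definable function $h(x):=\operatorname{sign}(S(x,\gamma(x),x^m))$, which by the previous paragraph equals $\alpha_x$ for small $x$; it is a composition of a definable function with $\gamma$, hence takes values in the finite set $\{-1,0,+1\}$ and is definable as a germ. Applying Lemma~\ref{lem:1} (or directly the regular separation property to $h$ and to $h\pm1$) shows $h$ is ultimately constant. Setting $\alpha$ equal to that constant value finishes the proof. The main obstacle, and the step deserving the most care, is the uniformity in $x$ of the cell decomposition of the planar set and the extraction of the finitely many exponents $k_i$: one must be sure that only finitely many definable boundary functions are involved and that regular separation is applied to each of the (finitely many) associated germs, so that a single $m$ works simultaneously.
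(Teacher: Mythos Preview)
Your argument has a genuine gap at the step where you pass to the planar set
\[
\{(x,z):(x,\gamma(x),z)\in A,\ 0<z<x\}\subset\R^2.
\]
You justify applying the Cell Decomposition Theorem to this set by asserting that ``the graph of $\gamma$ is definable, so the composed set is definable''. But $\gamma$ is \emph{not} assumed definable: it is a solution of a definable system, and the whole point of the regular separation hypothesis is to furnish a substitute for definability. So this planar set is in general not definable, and you cannot run a cell decomposition on it; in particular your boundary functions $\xi_i(x)$ do not arise as definable functions of $x$. (Later you write that $x\mapsto\xi_i(x)$ is ``a composition of a definable function with $\gamma$'', which is the right kind of object for regular separation---but that is not what a planar cell decomposition would hand you.)

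The fix is to stay in $\R^4$: take a cell decomposition adapted to the sign of $S$; by Lemma~\ref{lem:1} the curve $(x,\gamma(x))$ eventually lies in a single base cell $A\subset\R^3$, above which there are finitely many definable boundary functions $\xi_i:A\to\R$. Now $x\mapsto\xi_i(x,\gamma(x))$ really is a composition of a definable function with $\gamma$, and regular separation yields the exponents $k_i$; the sign on the innermost band over $A$ is constant because that band is a single cell of $\R^4$. This repaired version works, but note that the paper's proof is considerably shorter: it defines the limiting sign $s(x,y_1,y_2)=\lim_{z\to 0^+}\mathrm{sign}\,S(x,y_1,y_2,z)$ and the threshold $r(x,y_1,y_2)=\sup\{z\le 1:\mathrm{sign}\,S=\alpha\text{ on }(0,z)\}$, both definable in $(x,y_1,y_2)$, and applies regular separation just twice---once to $s$ to get $\alpha$, once to $r$ to get $m$---with no cell decomposition at all.
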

\begin{proof}
	Let $s(x,y_1,y_2)=\lim_{z\to 0, z>0} (\mathds 1_{\mathbb R^+}-\mathds 1_{\mathbb R^-}) \circ S(x,y_1,y_2,z)$. Then $s$ is definable, well defined
	and has a value in $\{-1,0,1\}$ depending on the sign of $S(x,y_1,y_2,z)$ for small positive $z$. {By the}  regular separation property, $s(x,\gamma(x))$ has ultimately constant value, then constant sign $\alpha$ for small $x$.
	
	Now, let $r(x,y_1,y_2)=\sup\{z\le 1;\;\forall z',\; 0<z'<z\Rightarrow \mathrm{sign}(S(x,y_1,y_2,z))=\alpha\}$. Again, $r$ is definable, and
	moreover, for small $x$, $r(x,\gamma(x))>0$ by definition of $\alpha$. So, from regular separation, there exists $a>0$, $m>0$, such that $x\in(0,a)\Rightarrow r(x,\gamma(x))>x^m$, which is the {statement} of the lemma.
\end{proof}

We will also use the following elementary result ``\`{a} la Rolle'', {whose proof is left to the reader}:
\begin{proposition}\label{prop:0}
	Let $f:\mathbb R\to\mathbb R$ be a differentiable map and $a< b$ be two consecutive zeros of $f$. Then $f'(a)f'(b)\le 0$.
\end{proposition}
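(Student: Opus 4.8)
The plan is to reduce everything to one-sided difference quotients at the two endpoints. First I would observe that, since $f$ is differentiable, it is continuous, and since $a<b$ are \emph{consecutive} zeros, $f$ does not vanish anywhere on the open interval $(a,b)$; by the intermediate value theorem, $f$ therefore has constant sign on $(a,b)$. Replacing $f$ by $-f$ if necessary --- which changes neither the hypotheses nor the sign of the product $f'(a)f'(b)$ --- I may assume $f(x)>0$ for all $x\in(a,b)$.

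Next I would evaluate the two derivatives as one-sided limits taken along $(a,b)$. At the left endpoint, for $x\in(a,b)$ one has $\frac{f(x)-f(a)}{x-a}=\frac{f(x)}{x-a}>0$, since both numerator and denominator are positive; letting $x\to a^{+}$ gives $f'(a)\ge 0$. At the right endpoint, for $x\in(a,b)$ one has $\frac{f(x)-f(b)}{x-b}=\frac{f(x)}{x-b}<0$, since the numerator is positive and the denominator negative; letting $x\to b^{-}$ gives $f'(b)\le 0$. Multiplying the two inequalities yields $f'(a)f'(b)\le 0$, which is the assertion.

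The argument is entirely routine, which is why the paper leaves it to the reader; there is no genuine obstacle. The only two points that deserve a moment's attention are the appeal to the intermediate value theorem to deduce constant sign on $(a,b)$ --- this is exactly where the hypothesis that $a$ and $b$ are \emph{consecutive} zeros is used, and it is false without it --- and keeping track of the sign of $x-b<0$ when passing to the limit from the left at $b$, so that the inequality on $f'(b)$ comes out with the correct direction.
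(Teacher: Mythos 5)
Your proof is correct. The paper itself gives no proof of this proposition (it is explicitly ``left to the reader''), and your argument via one-sided difference quotients, after using the intermediate value theorem to fix a sign on $(a,b)$, is the standard and natural way to establish it.
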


\subsection{The proof.}

We first reduce the proof to that of the following seemingly weaker property.

\begin{proposition}\label{prop:sign}
	With the same hypothesis as {in} Theorem \ref{th:main}, either any germ in $\mathcal{F}(x,\gamma,\delta)$ has an ultimate sign, or else $\g$ and $\delta$ are interlaced.
\end{proposition}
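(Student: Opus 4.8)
The goal is to show that if every germ in $\mathcal{F}(x,\gamma,\delta)$ has an ultimate sign (and $\gamma,\delta$ are not interlaced), then $\mathcal{F}(x,\gamma,\delta)$ is actually a Hardy field. So assume from now on that $\gamma$ and $\delta$ are not interlaced. The first reduction is clean: if every germ has an ultimate sign, then $\mathcal{F}(x,\gamma,\delta)$ is a local ring whose residue field is $\mathbb{R}$ and, more importantly, every nonzero germ is ultimately nonvanishing, hence invertible \emph{as a germ}; but to get a \emph{field} we need the inverse to again lie in $\mathcal{F}(x,\gamma,\delta)$, which is automatic since $1/f(x,\gamma(x),\delta(x))$ is the composition of the definable function $1/f$ (defined off the zero set of $f$) with $(x,\gamma,\delta)$. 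Thus $\mathcal{F}(x,\gamma,\delta)$ is a field as soon as Proposition~\ref{prop:sign}'s first alternative holds. The real content is closure under differentiation.

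\textbf{Closure under derivation.} Let $\varphi(x) = f(x,\gamma(x),\delta(x)) \in \mathcal{F}(x,\gamma,\delta)$. Using the system $(S_F)$, both $\gamma' = F(x,\gamma)$ and $\delta' = F(x,\delta)$ are themselves compositions of definable maps with $(x,\gamma,\delta)$; hence, wherever $f$ is $C^1$, the chain rule gives
\[
\varphi'(x) = \partial_x f + \nabla_{y_1}f \cdot F_1(x,\gamma) + \nabla_{y_2}f\cdot F_2(x,\delta) \Big|_{(x,\gamma(x),\delta(x))},
\]
which is visibly again the composition of a definable function with $(x,\gamma,\delta)$, so $\varphi' \in \mathcal{F}(x,\gamma,\delta)$ \emph{provided} $f$ is differentiable along the curve. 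A general definable $f$ need not be $C^1$, but by the Cell Decomposition Theorem (in the $C^1$ version, \cite[Theorem 6.7]{Cos}) $\mathbb{R}^5$ decomposes into finitely many $C^1$ cells on each of which $f$ is $C^1$; by Lemma~\ref{lem:1} applied to the curve $x\mapsto(x,\gamma(x),\delta(x))$ in $\mathbb{R}^5$ and the (finitely many) cells, the curve ultimately lies in a single cell, on which $f$ agrees with a $C^1$ definable function. So $\varphi$ itself coincides ultimately with a $C^1$ germ and the formula above is valid for small $x$. This shows $\mathcal{F}(x,\gamma,\delta)$ is a differential ring, and combined with the field property it is a Hardy field.

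\textbf{Where the work is: deducing the ultimate sign of derivatives.} The subtlety I anticipate is that a Hardy field must consist of germs with ultimate signs \emph{and} whose derivatives have ultimate signs — i.e. germs that are ultimately monotone — and a priori the ``ultimate sign'' hypothesis of Proposition~\ref{prop:sign} is only assumed for the germs themselves, not obviously for their derivatives. But $\varphi'$ is again in $\mathcal{F}(x,\gamma,\delta)$ by the previous paragraph, so the hypothesis applies to it too: $\varphi'$ has an ultimate sign, so $\varphi$ is ultimately monotone, so every germ in the ring is ultimately monotone, which is exactly the remaining axiom for a Hardy field. Thus the only genuine reduction needed is Proposition~\ref{prop:sign} itself; once its first alternative is granted, the passage to ``Hardy field'' is the above bookkeeping. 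The place to be careful is that everything in sight — $\gamma'$, $\delta'$, partial derivatives of $f$, the cell-membership indicator functions — must be written honestly as definable functions of $(x,y_1,y_2,y_3,y_4)$ (not of $x$ alone) so that Lemma~\ref{lem:1} and the composition structure of $\mathcal{F}(x,\gamma,\delta)$ apply; I expect this to be routine but it is the part that must be spelled out with care.

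\textbf{Proof of Proposition~\ref{prop:sign} itself.} For this I would argue by contradiction: suppose some $\varphi(x)=f(x,\gamma(x),\delta(x))$ has no ultimate sign, i.e. changes sign infinitely often as $x\to 0$, giving an infinite sequence of zeros $x_n\to 0$. The idea is to convert a sign change of a \emph{scalar} definable germ along the curve into infinite spiralling of the \emph{planar} curve $\varepsilon=\delta-\gamma$. The flat-contact hypothesis forces $\varepsilon(x)\to 0$ faster than any power of $x$, so $f$ evaluated along $(x,\gamma(x),\delta(x))=(x,\gamma(x),\gamma(x)+\varepsilon(x))$ is, for the purposes of its sign, governed by the behavior of $f$ for $y_4$ infinitesimally close to $y_3$; this is precisely the regime controlled by Lemma~\ref{lem:2} (after reorganizing variables so that the ``small'' slot plays the role of $z$, or a suitable reparametrization thereof using $\|\varepsilon\|$ and the direction $\Theta(x)$). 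The plan is: write $f$ along the curve in ``polar'' form in $\varepsilon$, i.e. as $S(x,\gamma(x),\|\varepsilon(x)\|,\Theta(x))$ for a definable $S$; Lemma~\ref{lem:2} (suitably adapted to a two-dimensional direction parameter, or applied coordinate-wise) pins the sign of such expressions for $\|\varepsilon\|$ below a power of $x$ \emph{as a function only of the direction $\Theta$ and of $x$}; hence a sign change of $\varphi$ at consecutive zeros $x_{n+1}<x_n$ forces the direction $\Theta$ to sweep through a sign-changing locus of the relevant definable ``angular'' function, and iterating over all $n$ forces the total variation of the angle $\theta(x)$ to be infinite — that is, $\gamma$ and $\delta$ are interlaced, contradicting our standing assumption. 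The hard part of the whole proof is exactly this step: cleanly extracting, from ``$\varphi$ changes sign infinitely often,'' the infinite angular variation, using Lemma~\ref{lem:2} to reduce the sign of $\varphi$ to a function of the direction alone, and Proposition~\ref{prop:0} (the Rolle-type statement) to control the derivative at consecutive zeros and thereby the monotone progression of the angle. I would expect to need an auxiliary definable function encoding ``the angular sectors where $S(x,\gamma(x),z,\cdot)$ is positive/negative for $z$ small,'' combined again with Lemma~\ref{lem:1} to make its combinatorial type ultimately constant in $x$, so that the picture stabilizes and the spiralling is forced.
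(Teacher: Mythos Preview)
Your proposal conflates two distinct tasks. The first three sections (``The plan,'' ``Closure under derivation,'' ``Where the work is'') argue the implication Proposition~\ref{prop:sign} $\Rightarrow$ Theorem~\ref{th:main}, not Proposition~\ref{prop:sign} itself. That reduction is indeed carried out separately in the paper; the statement you were asked to prove is the proposition, and only your final paragraph addresses it.

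Even in the reduction there is a gap: you invoke Lemma~\ref{lem:1} to conclude that $x\mapsto(x,\gamma(x),\delta(x))$ ultimately lies in a single cell of $\mathbb{R}^5$. But Lemma~\ref{lem:1} concerns the regular separation property of $\gamma$, which by Definition~\ref{def:regular-flat} involves definable functions of \emph{three} variables; it says nothing about curves in $\mathbb{R}^5$, and $(\gamma,\delta)$ is not assumed to have any regular separation property. The paper closes this gap differently: it uses Proposition~\ref{prop:sign} itself (already granted at that point in the reduction) to conclude that the germ $1-\mathds{1}_C(x,\gamma(x),\delta(x))\in\mathcal{F}(x,\gamma,\delta)$ has an ultimate sign, hence vanishes identically once it vanishes infinitely often.

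As for the proposition itself, your final paragraph has the right overall shape --- turn infinitely many sign changes of $\varphi$ into infinite angular variation of $\varepsilon=\delta-\gamma$ --- but the mechanism you propose does not go through. Writing $\varphi$ in polar form $S(x,\gamma(x),\|\varepsilon\|,\Theta)$ and appealing to Lemma~\ref{lem:2} is not justified: that lemma controls the sign of $S(x,\gamma(x),z)$ for a \emph{single} scalar $z\to 0^+$, whereas here you have a small radial variable together with a free angular variable in $\mathbb{S}^1$ (equivalently, two small Cartesian variables $z_1,z_2$). The threshold $z<x^m$ would depend on $\Theta$, and obtaining uniformity in $\Theta$ is exactly the hard part; your phrase ``suitably adapted\ldots or applied coordinate-wise'' hides the entire difficulty. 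The paper avoids polar coordinates altogether and works in Cartesian $(z_1,z_2)$ via a $C^1$ cell decomposition of $\mathbb{R}^5$ adapted to the signs of $z_1$, $z_2$, and $h$. The argument then proceeds through layered claims: localize $(x,\gamma(x))$ in a unique $3$-cell (regular separation), then $(x,\gamma(x),\varepsilon_1(x))$ in the thin strips adjacent to $z_1=0$ (flat contact), then show (Claim~\ref{lm:sign-H}) that if $G$ has no ultimate sign, $\varepsilon_1$ must vanish infinitely often --- this step combines Lemma~\ref{lem:2}, Proposition~\ref{prop:0}, and crucially a \emph{uniqueness-of-solutions} argument for the ODE $z_2'=f_4(x,\gamma,z_1,z_2)$ to rule out the curve lying on a single graph $\psi_k^+$, a mechanism absent from your sketch --- and finally (Claim~\ref{cla:angle}) that at each zero of $\varepsilon_1$ the angular derivative $\theta'$ has a fixed nonzero sign (again via Lemma~\ref{lem:2} and ODE uniqueness), which with Proposition~\ref{prop:0} forces $\theta\to\pm\infty$. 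Your ``sweeping through sign-changing loci'' is a fair heuristic for the conclusion, but the actual engine --- the cell-by-cell analysis and the two appeals to ODE uniqueness --- is missing.
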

\begin{proof}[Proof of (Proposition \ref{prop:sign} $\; \Rightarrow$ Theorem \ref{th:main})]
	
	Suppose that $\g,\delta$
	are not interlaced. Since any germ $\varphi$ in $\mathcal{F}(x,\g,\delta)$ has an ultimate sign, one can define the germ of $1/\varphi$ as soon as the germ of $\varphi$ is not zero, i.e., as soon as $\varphi(x)\neq 0$ for any $x$ close enough to 0. So $\mathcal{F}(x,\gamma,\delta)$ is a field. It remains to show that this field is closed under derivation.
	
	For this, let $g:\mathbb R^5\to\mathbb R$ be a definable function whose domain contains the graph of $(\gamma,\delta)$ restricted to  $(0,a)$ for some $a>0$.
	By \cite[Cell Decomposition Theorem (2.11)]{vdD} and \cite[Theorem 6.7]{Cos}, there exists a $C^1$ cell decomposition $\mathcal C$ of $\mathbb R^5$ such that, for any cell
	$C\in\mathcal C$, the restriction $g_{|C}$ of $g$ to $C$ is $C^1$. Since there are finitely many cells in $\mathcal C$, the curve
	$x\mapsto(x,\gamma,\delta)$ meets at least one cell $C$ infinitely many times as $x$ goes to $0$. Denote by $\mathds 1_C:\mathbb
	R^5\to\mathbb R$ the {characteristic} function of $C$. Since $1-\mathds 1_C$ is a definable function and $\gamma$, $\delta$ are not
	interlaced, by Proposition \ref{prop:sign}, the function $x\mapsto 1-\mathds 1_C(x,\gamma(x),\delta(x))$ has an ultimate sign as $x$ goes to zero. Since $(x,\gamma(x),\delta(x))$ meets $C$ infinitely many times, $1-\mathds 1_C(x,\gamma(x),\delta(x))$ vanishes
	infinitely many times, then identically. Hence, the graph of $(\gamma,\delta)$ is contained in $C$ for small $x$.
	
	Now, $x\mapsto (x,\gamma,\delta)$ is a $C^1$ curve, whose image is included in $C$ and the restriction of $g$ to $C$ is $C^1$. So the composition $g\circ (x,\gamma,\delta)$ is $C^1$. Moreover, its derivative is the composition of the differential of $g_{|C}$ with the derivative of $x\mapsto(x,\gamma,\delta)$. Since $\gamma,\delta$ are solutions of \eqref{SF}, this derivative is itself the composition of a definable function with $(x,\gamma,\delta)$. Summarizing, $g(x,\gamma,\delta)$ is ultimately $C^1$ and the germ of its derivative belongs to $\mathcal{F}(x,\gamma,\delta)$, which concludes the proof.
\end{proof}

\begin{proof}[Proof of Proposition \ref{prop:sign}]
	
	Let \eqref{SF} be a definable system  and $\gamma$, $\delta$ be solutions at 0 of \eqref{SF} that satisfy the hypothesis of Theorem \ref{th:main}. We prove Proposition \ref{prop:sign} by showing that if a germ in $\mathcal F(x,\gamma,\delta)$ does not have constant sign, then $\gamma, \delta$ are interlaced. Let $g:\R^5\to \R$ be definable in $\RR$, and whose domain contains the graph of $(\gamma,\delta)$. Set $G(x):=g(x,\g_1(x),\g_2(x),\delta_1(x),\delta_2(x))$. We shall conclude that $\gamma, \delta$ are interlaced if $G$ does not have an ultimate sign as $x$ goes to $0$.
	
	Denote as before $\varepsilon(x)=(\eps_1(x),\eps_2(x)):=\delta(x)-\gamma(x)$. Define
	\[
	h(x,y_1,y_2,z_1,z_2)=g(x,y_1,y_2,y_1+z_1,y_2+z_2),
	 \]
	so that $h$ is a definable function whose domain contains the graph of $(\gamma,\varepsilon)$, and $h(x,\gamma(x),\varepsilon(x))=G(x)$.
	Note that $(\gamma,\varepsilon)$ is a solution at $0$ of the differential system $\eqref{tildeSF}$:
\begin{equation}\label{tildeSF}\tag{$\tilde{S}_F$}
\left\{\begin{aligned}
		{y_1}'&=&f_1(x,y_1,y_2)\\
		{y_2}'&=&f_2(x,y_1,y_2)\\
		{z_1}'&=&f_3(x,y_1,y_2,z_1,z_2)\\
		{z_2}'&=&f_4(x,y_1,y_2,z_1,z_2)
		\end{aligned}\right. 
	\end{equation} 
%\[ (\tilde{S}_F)\ \ \ \ \left\{\begin{array}{lcl}
%		{y_1}'&=&f_1(x,y_1,y_2)\\
%		{y_2}'&=&f_2(x,y_1,y_2)\\
%		{z_1}'&=&f_3(x,y_1,y_2,z_1,z_2)\\
%		{z_2}'&=&f_4(x,y_1,y_2,z_1,z_2)
%	\end{array}\right.\]
	where $f_3$ and $f_4$ are the definable functions given by %\[
	\begin{align*}%{ll}
		f_3(x,y_1,y_2,z_1,z_2):=f_1(x,y_1+z_1,y_2+z_2)-f_1(x,y_1,y_2);\notag \\ f_4(x,y_1,y_2,z_1,z_2):=f_2(x,y_1+z_1,y_2+z_2)-f_2(x,y_1,y_2).\notag\end{align*}%\]
	We set $\tilde F =(f_1,f_2,f_3,f_4)$ and  $c:\R\rightarrow \R^5,\ x\mapsto (x,\gamma(x),\varepsilon(x))$.
	
	Up to extending $h$ by zero, we assume that its domain is $\mathbb R^5$. {By the   already cited  Cell Decomposition Theorem}, there exists a cell decomposition $\mathcal{C}$ of $\R^5$ into cells of class $C^1$ adapted to the {signs} of $z_1$, $z_2$ and $h$. This means that, for each cell $C$ in $\mathcal C$, the restrictions $z_1{}_{|C}$, $z_2{}_{|C}$ and $h_{|C}$ have constant sign.
	
	In the following claims, we describe the intersection of the cells of $\mathcal C$ with the image of $c$. Denote by $\pi_0, \pi_1$ the linear projections given by
	\begin{equation*}\begin{aligned}
		\pi_0:\R^5\to\R^3,\;\;\pi_0:(x,y_1,y_2,z_1,z_2)\mapsto (x,y_1,y_2), \\
		\pi_1:\R^5\to\R^4,\;\;\pi_1:(x,y_1,y_2,z_1,z_2)\mapsto(x,y_1,y_2,z_1).
	\end{aligned}
\end{equation*}
	We introduce $\mathcal A$ and $\mathcal B$, respectively the cell decompositions of $\mathbb R^3$ and $\mathbb R^4$ induced by $\mathcal C$: the cells of $\mathcal A$ and $\mathcal B$ are respectively the images $\pi_0(C)$ and $\pi_1(C)$ of cells $C\in\mathcal C$.
	
	\begin{claim}\label{cla:3cell}
		There exists a unique $A\in\mathcal A$ such that $\pi_0(c(x))\in A$ for small $x$.
	\end{claim}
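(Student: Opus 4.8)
The plan is to deduce the claim from two facts: that $\mathcal A$ is an honest cell decomposition of $\R^3$ — a finite partition into pairwise disjoint cells — and that $\gamma$ does not oscillate with respect to definable sets, i.e.\ Lemma~\ref{lem:1}. This is exactly the pattern already used above to place the graph of $(\gamma,\delta)$ inside a single $5$-dimensional cell, specialized here to $\gamma$ and $\R^3$. First I would record that, since $\mathcal C$ is built compatibly with the coordinate order $(x,y_1,y_2,z_1,z_2)$, the projections $\pi_0(C)$ for $C\in\mathcal C$ are cells of $\R^3$ that are pairwise either equal or disjoint; hence the distinct ones constitute a cell decomposition $\mathcal A$ covering $\R^3$, so every point of $\R^3$ lies in exactly one cell of $\mathcal A$.

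Next, note that $\pi_0(c(x))=(x,\gamma(x))$, so the relevant set is, for small $x>0$, a piece of the graph of $\gamma$, contained in $\R_+\times\R^2\subset\R^3$. Since $\mathcal A$ is finite and its cells partition $\R^3$, the pigeonhole principle yields a cell $A\in\mathcal A$ and an infinite sequence $x_n\to 0$ with $(x_n,\gamma(x_n))\in A$. Applying Lemma~\ref{lem:1} with $X=A$ then provides $a>0$ such that $A$ contains the graph of $\gamma$ on $(0,a)$, that is, $\pi_0(c(x))\in A$ for all $x\in(0,a)$; this gives existence. For uniqueness, if $A'\in\mathcal A$ also contained $\pi_0(c(x))$ for all small $x$, then $A\cap A'\ne\emptyset$, which forces $A'=A$ because distinct cells of $\mathcal A$ are disjoint.

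The single point I would take care to state explicitly, rather than leave tacit, is that the family of projections $\{\pi_0(C):C\in\mathcal C\}$ genuinely forms a cell decomposition of $\R^3$ — in particular that two distinct such projections are disjoint — which is the standard compatibility of cell decompositions with coordinate projections, already implicitly invoked when $\mathcal A$ and $\mathcal B$ were introduced. Apart from this bookkeeping there is no real obstacle here: the claim is just the ``$\R^3$-shadow'' of the $\R^5$ cell decomposition, with the regular separation of $\gamma$ upgrading ``$\pi_0(c(x_n))\in A$ for some sequence $x_n\to 0$'' to ``$\pi_0(c(x))\in A$ ultimately''.
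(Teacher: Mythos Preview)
Your proof is correct and follows essentially the same approach as the paper: use that $\mathcal A$ is a finite partition of $\R^3$ to find a cell $A$ meeting the curve $(x,\gamma(x))$ along a sequence $x_n\to 0$, apply Lemma~\ref{lem:1} to upgrade this to ultimate containment, and deduce uniqueness from disjointness of the cells. The only difference is cosmetic --- you spell out why the $\pi_0$-projections of the cells of $\mathcal C$ form a cell decomposition of $\R^3$, which the paper simply takes for granted.
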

	{\em Proof.}
	$\mathcal A$ is a finite partition of $\mathbb R^3$,  then at least one cell $A\in\mathcal A$ contains infinitely many points of the curve $x\mapsto \pi_0(c(x))$  when $x$ goes to $0$. Since $\pi_0(c(x))=(x,\gamma(x))$ and $\gamma$ has the regular separation property, Lemma \ref{lem:1} shows that $\pi_0(c(x))$ belongs to $A$ for small $x$. Finally, $\mathcal A$ is a partition, so $x\mapsto \pi_0(c(x))$ cannot ultimately meet any other cell in $\mathcal A$ but $A$, so $A$ is unique.
	\qed
	
	\strut
	
	The decomposition of $A\times \mathbb{R}\subset \R^4$ induced by $\mathcal{B}$ is given by:
	\begin{itemize}
		\item finitely many graphs of definable maps over $A$, totally ordered, including the null function (since the partition is adapted to $z_1=0$). We denote these functions by $\phi_{-r}<\dots<\phi_{-1}<\phi_0:=0<\phi_1<\dots<\phi_{\ell}$;
		\item the strips between these graphs, given and denoted by
		\[ 
		(\phi_{j-1},\phi_j):=\{(x,y_1,y_2,z_1);\;(x,y_1,y_2)\in A,
		\phi_{j-1}(x,y_1,y_2)<z_1<\phi_{j}(x,y_1,y_2)\},
		\] for $j=-r,\dots,\ell+1$, where we put $\phi_{-r-1}\equiv -\infty$ and $\phi_{\ell+1}\equiv+\infty$.
	\end{itemize}
	
	We set $B^-:=(\phi_{-1},0)$, $B^0:=A\times\{0\}$ and $B^+:=(0,\phi_{1})$.
	
	\begin{claim}\label{cla:4cell}
		For small $x$, $\pi_1(c(x))\in B^-\cup B^0\cup B^+$.
	\end{claim}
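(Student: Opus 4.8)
The plan is to control the two innermost cell boundaries $\phi_{-1}$ and $\phi_1$ along the curve $x\mapsto\pi_0(c(x))=(x,\g(x))$ by means of the regular separation property of $\g$, and then to squeeze the first component $\eps_1$ of $\eps$ strictly between them using the flat contact hypothesis. By Claim~\ref{cla:3cell}, $(x,\g(x))\in A$ for small $x$, hence $\pi_1(c(x))=(x,\g(x),\eps_1(x))\in A\times\R$ for small $x$, so it suffices to locate $\eps_1(x)$ relative to the graphs $\phi_j$ over $A$. Since $\phi_0\equiv 0$, the union $B^-\cup B^0\cup B^+$ is precisely the subset of $A\times\R$ cut out by $\phi_{-1}(x,y_1,y_2)<z_1<\phi_1(x,y_1,y_2)$; thus the claim reduces to proving $\phi_{-1}(x,\g(x))<\eps_1(x)<\phi_1(x,\g(x))$ for $x$ close to $0$.

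To control the boundaries, I would extend $\phi_1$ and $\phi_{-1}$ by $0$ outside $A$, turning them into definable maps $\R^3\to\R$ whose domains contain the graph of $\g$. Because $\phi_{-1}<\phi_0=0<\phi_1$ on $A$ and $(x,\g(x))\in A$ for small $x$, neither germ $x\mapsto\phi_1(x,\g(x))$ nor $x\mapsto\phi_{-1}(x,\g(x))$ is identically zero near $0$; hence the regular separation property of $\g$ (Definition~\ref{def:regular-flat}(1)) provides $k\in\N$ and $a>0$ with $\phi_1(x,\g(x))\ge x^k$ and $-\phi_{-1}(x,\g(x))\ge x^k$ for all $x\in(0,a)$ (take the maximum of the two exponents supplied by the definition). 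On the other side, flat contact yields $a'\in(0,a)$ with $|\eps_1(x)|\le\|\eps(x)\|\le x^{k+1}$ on $(0,a')$; since $x^{k+1}<x^k$ on $(0,1)$, we obtain $\phi_{-1}(x,\g(x))\le -x^k<\eps_1(x)<x^k\le\phi_1(x,\g(x))$ for $x\in(0,a')$, which is exactly the asserted inclusion.

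I do not expect a genuine obstacle here. The only point needing a little care is that $\phi_{\pm 1}$ are a priori defined only over $A$, so they must be extended to $\R^3$ before regular separation is invoked, and one must observe that the corresponding germs along $\g$ are nonzero, so that the effective alternative in the definition of regular separation is the lower bound ``$\ge x^k$'' and not the ``identically $0$'' one; this is guaranteed by the strict ordering $\phi_{-1}<\phi_0<\phi_1$ on $A$. (In the degenerate cases $\ell=0$ or $r=0$ one has $\phi_1\equiv+\infty$ or $\phi_{-1}\equiv-\infty$, and the corresponding inequality is then automatic.) Everything else is the elementary squeezing estimate above.
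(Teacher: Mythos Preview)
Your proposal is correct and follows essentially the same approach as the paper's proof: both use regular separation of $\gamma$ to bound $|\phi_{\pm 1}(x,\gamma(x))|$ from below by a power of $x$, then use flat contact to bound $|\varepsilon_1(x)|$ from above by a smaller power, and conclude by squeezing. Your version is slightly more detailed (extending $\phi_{\pm 1}$ by $0$ outside $A$, explicitly noting the signs of $\phi_{\pm 1}$ on $A$, and handling the degenerate cases $\ell=0$ or $r=0$), but the argument is the same.
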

	
	{\em Proof.} Since $\gamma$ has regular separation property and $\phi_{-1}$ and $\phi_1$ do not vanish over $A$, which contains the graph of $\gamma$ by Claim \ref{cla:3cell}, there exist natural numbers $n,\, m$ such that, ultimately, $|\phi_{-1}(x,\gamma(x))|>x^{n_{}}$ and $|\phi_1(x,\gamma(x))|>x^{m}$. On the other hand, since $\gamma$ and $\delta$ have flat contact, $|\varepsilon_1(x)|<x^{\max(n_{},m)}$ for small $x$. So, $\phi_{-1}(x,\gamma(x))<\varepsilon_1(x)<\phi_{1}(x,\gamma(x))$ for small $x$, qed.
	\qed
	
	\strut
	
	For $\alpha\in\{-,0,+\}$, the decomposition of $B^\alpha\times\R$ induced by $\mathcal{C}$ {consists} of finitely many graphs of definable functions
	$\psi_{-r_{\alpha}}^\alpha<\dots<\psi^{\alpha}_0:=0<\dots<\psi_{\ell_{\alpha}}^\alpha$ over $B^\alpha\subset \R^4$ and  strips $(\psi^\alpha_{j-1},\psi^\alpha_{j})$ for $j=-r_{\alpha},...,\ell_{\alpha}+1$ (where, as above, $\psi_{-r_{\alpha}-1}^\alpha := -\infty$ and $\psi_{\ell_{\alpha}+1}^\alpha :=+\infty$).
	We denote by $C^-=(\psi^0_{-1},0)$ and $C^+=(0,\psi^0_{1})$ the two strips over $B^0$ adjacent to $B^0\times \{0\}=A\times{(0,0)}$. As  in Claim \ref{cla:4cell}, we get that:
	\begin{claim}\label{cla:5cell}
		For small $x$, $\pi_1(c(x))\in B^0\Rightarrow c(x)\in C^+\cup C^-$.
	\end{claim}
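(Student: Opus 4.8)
The plan is to run the same argument as in Claim~\ref{cla:4cell}, one dimension higher. First I would translate the hypothesis: since $B^0=A\times\{0\}$ and $\pi_1(c(x))=(x,\gamma(x),\eps_1(x))$, the condition $\pi_1(c(x))\in B^0$ means exactly that $\eps_1(x)=0$; together with Claim~\ref{cla:3cell} (which gives $(x,\gamma(x))\in A$ for small $x$) this forces $c(x)=(x,\gamma(x),0,\eps_2(x))$ with $(x,\gamma(x),0)\in B^0$. So the claim amounts to showing that, for small such $x$, one has $\psi^0_{-1}(x,\gamma(x),0)<\eps_2(x)<\psi^0_1(x,\gamma(x),0)$ and $\eps_2(x)\neq 0$.

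For the two-sided estimate I would argue as in Claim~\ref{cla:4cell}. The graphs $\psi^0_{-r_0}<\dots<\psi^0_0\equiv 0<\dots<\psi^0_{\ell_0}$ are strictly ordered on all of $B^0$, so $\psi^0_{-1}<0<\psi^0_1$ on $B^0$; in particular $\psi^0_{-1}$ and $\psi^0_1$ do not vanish on $B^0$. Restricting to the graph of $\gamma$, the germs $x\mapsto\psi^0_{\pm 1}(x,\gamma(x),0)$ are compositions of definable maps with $\gamma$ and are ultimately non-zero, so by the regular separation property of $\gamma$ there is $k\in\N$ with $|\psi^0_{-1}(x,\gamma(x),0)|>x^k$ and $|\psi^0_1(x,\gamma(x),0)|>x^k$ for small $x$. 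Since $\gamma$ and $\delta$ have flat contact, $|\eps_2(x)|<x^k$ for small $x$, and the desired inequalities follow.

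Finally I would rule out $\eps_2(x)=0$ (which would place $c(x)$ on the cell $B^0\times\{0\}$ rather than on one of its two adjacent strips $C^\pm$). This is where a genuinely new input enters, though a very mild one: since $F$ is $C^1$ on $\Omega$ it is locally Lipschitz, so two solutions of \eqref{SF} that agree at a point agree on a neighbourhood of it, hence on their whole (connected) common interval of definition by a standard open--closed argument; as $\gamma\neq\delta$, this yields $\eps(x)\neq 0$ for every $x$, so $\eps_1(x)=0$ implies $\eps_2(x)\neq 0$. Putting the pieces together, whenever $\pi_1(c(x))\in B^0$ for small $x$ we get $c(x)\in C^+$ if $\eps_2(x)>0$ and $c(x)\in C^-$ if $\eps_2(x)<0$. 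I do not expect any real obstacle here: the proof is a transcription of Claim~\ref{cla:4cell} with $(z_2,B^0,\psi^0_{\pm 1},C^\pm)$ in the roles of $(z_1,A,\phi_{\pm 1},B^\pm)$, the only point to be careful about being the non-vanishing of $\eps$.
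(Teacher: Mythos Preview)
Your proposal is correct and follows essentially the same approach as the paper: both use regular separation of $\gamma$ together with flatness of $\varepsilon_2$ to trap $\varepsilon_2(x)$ strictly between $\psi^0_{-1}(x,\gamma(x),0)$ and $\psi^0_1(x,\gamma(x),0)$, and then invoke uniqueness of solutions of \eqref{SF} (from $F\in C^1$) to rule out $\varepsilon(x)=(0,0)$. Your write-up is simply more explicit than the paper's, which just says ``following the same proof as in Claim~\ref{cla:4cell}'' and ``since $\gamma$ and $\delta$ are different solutions of \eqref{SF}, $\varepsilon(x)\neq(0,0)$ for small $x$''.
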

	{\em Proof.}
	Following the same proof as  in Claim \ref{cla:4cell}, the regular separation of $\gamma$ and the flatness of $\varepsilon_2$ give that $\psi^0_{-1}(x,\gamma(x),0)< \varepsilon_2(x)<\psi^0_{1}(x,\gamma(x),0)$ for small $x$. Now, since $\gamma$ and $\delta$ are different solutions of \eqref{SF}, $\varepsilon(x)\neq (0,0)$ for small $x$. Hence, ultimately, if $\pi_1(c(x))\in B^0$, then $\varepsilon_1(x)=0$, so $\varepsilon_2(x)\neq 0$. We obtain that $c(x)\notin B^0\times\{0\}$, so $c(x)\in C^+\cup C^-$ as required.
	\qed

	\begin{claim}\label{lm:sign-H}
		If $G$ has not an ultimate sign, then the parameterized curve $x\mapsto\pi_1(c(x))$ meets $B^0$ infinitely many times as $x$ goes to $0$.
	\end{claim}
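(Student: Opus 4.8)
The plan is to argue by contraposition: assume that $x\mapsto\pi_1(c(x))$ meets $B^0$ only finitely many times, and show that then $G$ has an ultimate sign. If $\pi_1(c(x))\notin B^0$ for small $x$, then by Claim \ref{cla:4cell} we have $\pi_1(c(x))\in B^-\cup B^+$ ultimately; since $B^-$ and $B^+$ are disjoint cells and $x\mapsto\pi_1(c(x))$ is continuous, it must ultimately stay in exactly one of them, say $B^\alpha$ with $\alpha\in\{-,+\}$. Thus $\varepsilon_1(x)$ has a fixed sign for small $x$, and $\pi_1(c(x))$ lives in a single cell of $\mathcal B$.

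Next I would look at what happens one dimension up. The cell $B^\alpha$ is decomposed by $\mathcal C$ into the graphs $\psi^\alpha_j$ and the open strips $(\psi^\alpha_{j-1},\psi^\alpha_j)$. The same regular-separation-plus-flatness argument used in Claims \ref{cla:4cell} and \ref{cla:5cell} applies: the functions $\psi^\alpha_j(x,\gamma(x),\varepsilon_1(x))$ — which are definable composed with the solution $(\gamma,\varepsilon_1)$ of an auxiliary system, hence subject to the regular separation property via the argument establishing that the relevant nonvanishing graphs are bounded below by powers of $x$ — either vanish identically or dominate a power of $x$, while $\varepsilon_2(x)$ is flat. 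Hence $\varepsilon_2(x)=c(x)_5$ is ultimately trapped strictly between two consecutive $\psi^\alpha_j$'s, or equals one of them identically; in either case $c(x)$ ultimately lies in a single cell $C\in\mathcal C$. But $\mathcal C$ is adapted to the sign of $h$, so $h_{|C}$ has constant sign, and therefore $G(x)=h(c(x))$ has constant sign for small $x$. This contradicts the hypothesis that $G$ has no ultimate sign, completing the contrapositive.

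One point that needs care is the assertion that $x\mapsto\psi^\alpha_j(x,\gamma(x),\varepsilon_1(x))$ is governed by the regular separation property: strictly speaking regular separation is a property of $\gamma$ and functions of $(x,\gamma(x))$, not of $(x,\gamma(x),\varepsilon_1(x))$. I would handle this exactly as in Claim \ref{cla:4cell}: over $B^\alpha$, evaluate the $\psi^\alpha_j$ along the section $z_1=\varepsilon_1(x)$ but estimate them by first restricting attention to $z_1=0$, i.e.\ compare with $\psi^\alpha_j(x,\gamma(x),0)$ using that the $\psi$'s are continuous definable functions and $\varepsilon_1$ is flat, or more robustly, note that $(\gamma,\varepsilon)$ is a solution of the auxiliary system $\eqref{tildeSF}$ and that Lemma \ref{lem:1} together with the already-established containment in $A$ and $B^\alpha$ lets us invoke regular separation of $\gamma$ for each nonvanishing $\psi^\alpha_j(\cdot,\gamma(\cdot),0)$. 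The boundedness-below of the relevant graph functions by a power of $x$, against the flatness of $\varepsilon_2$, is precisely the mechanism of Claim \ref{cla:5cell}.

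The main obstacle — and the reason this claim is stated separately — is organizing the two-step descent cleanly: first pinning $\pi_1(c(x))$ inside a single $4$-cell using flatness of $\varepsilon_1$, then pinning $c(x)$ inside a single $5$-cell using flatness of $\varepsilon_2$, while keeping track that at each stage the graph functions that bound the strips are the correct definable functions whose nonvanishing (over the cell containing the graph of $\gamma$) can be fed into the regular separation property. Once that bookkeeping is in place, the conclusion — constancy of the sign of $h_{|C}$ forcing an ultimate sign for $G$ — is immediate. I would therefore present the proof as: reduce to $\pi_1(c(x))$ ultimately in one $B^\alpha$; apply the Claim \ref{cla:5cell}-style argument in $B^\alpha$ to get $c(x)$ ultimately in one cell $C$; invoke the sign-adaptation of $\mathcal C$ to conclude $G=h\circ c$ has constant sign, contradicting the hypothesis.
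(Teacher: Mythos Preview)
Your contrapositive reduction to $\pi_1(c(x))\in B^\alpha$ for a fixed $\alpha\in\{+,-\}$ is fine, and matches the paper's first step. The gap is in the second step, where you claim the mechanism of Claims~\ref{cla:4cell}--\ref{cla:5cell} carries over to trap $c(x)$ in a single $5$-cell over $B^\alpha$. It does not. In Claim~\ref{cla:4cell} the walls $\phi_{\pm 1}(x,\gamma(x))$ are functions of the three variables $(x,y_1,y_2)$ evaluated along $\gamma$, so regular separation of $\gamma$ gives $|\phi_{\pm1}(x,\gamma(x))|\ge x^k$. In Claim~\ref{cla:5cell} the walls $\psi^0_{\pm1}(x,\gamma(x),0)$ are again functions of three variables evaluated along $\gamma$, because the fourth argument is the constant $0$. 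Over $B^\alpha$ with $\alpha\neq 0$, however, the walls are $\psi^\alpha_j(x,\gamma(x),\varepsilon_1(x))$ with $\varepsilon_1(x)\neq 0$, and regular separation of $\gamma$ says nothing about such a composite. Your proposed fix of restricting to $z_1=0$ is unavailable: the domain of $\psi^\alpha_j$ is the open cell $B^\alpha$, which excludes $z_1=0$. Even if one extended $\psi^\alpha_j$ continuously, the extension could vanish on $B^0$; for instance nothing prevents $\psi^+_1(x,y_1,y_2,z_1)=z_1$, in which case $\psi^+_1(x,\gamma(x),\varepsilon_1(x))=\varepsilon_1(x)$ is itself flat and gives no barrier against the flat $\varepsilon_2(x)$. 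Whether $\varepsilon_2(x)$ lies above or below $\varepsilon_1(x)$ is then undetermined --- and indeed must be, since in the interlaced case this comparison oscillates.

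The paper's proof is accordingly more delicate and uses the ODE structure essentially. Rather than showing $c(x)$ sits in a single $5$-cell, it allows $c(x)$ to cross some graph $\{z_2=\psi^+_k\}$ infinitely often, sets $w=z_2-\psi^+_k$, and applies Lemma~\ref{lem:2} (not regular separation directly) to the definable function $S(x,y_1,y_2,z_1)=dw(1,\tilde F)$ restricted to that graph. This yields a constant sign for $(w\circ c)'$ at zeros of $w\circ c$: a nonzero sign contradicts Rolle (Proposition~\ref{prop:0}), while sign zero forces $\psi^+_k(x,\gamma(x),\varepsilon_1(x))$ to satisfy the same ODE as $\varepsilon_2$, whence they coincide by uniqueness, contradicting the assumption that $h\circ c$ changes sign. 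The appeal to uniqueness of ODE solutions is the missing ingredient that your cell-trapping approach cannot supply.
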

	
	\begin{proof}
		We proceed by contradiction. 
		Suppose that $\pi_1(c(x))$ does not meet $B^0$ infinitely many times as $x$ goes to $0$. Then, $\varepsilon_1$ has an ultimate sign,  say $\varepsilon_1(x)>0$ for small $x$ (the case $\varepsilon_1(x)<0$ is similar). So, from Claim \ref{cla:4cell}, $c(x)\in B^+\times \mathbb R$ ultimately.
		
		Note that $h$ has a constant sign on each connected component of the union \[ \displaystyle U=\bigcup_{j=r_+-1}^{\ell_+} (\psi^+_j,\psi^+_{j+1})\] of the strips of $\mathcal C$ over $B^+$. Since  $G=h\circ c$ has not an ultimate sign and the curve $c$ is continuous, its image meets  infinitely many times the complement of $U$ in $B^+\times \mathbb R$, that is,
		the graphs of the $\psi^+_j$. %Being finitely many  $\psi^+_j$'s,
		Hence, there is an index $k$ such that the curve $c$ meets  infinitely many times the graph of $\psi^+_k$.
		
		Let $w(x,y_1,y_2,z_1,z_2):=z_2-\psi_k^+(x,y_1,y_2,z_1)$, so $w\circ c(x)$ vanishes whenever $c(x)$ belongs to the graph $\psi_k^+$. We first prove that ultimately, the sign of $(w\circ c)'(x)$ is constant, in fact equal to 0,  when $w(c(x))=0$. For this, remark that \[ (w\circ c)'(x)=dw(c(x))(c'(x)) = dw(c(x))\left(1,\tilde{F}(c(x))\right),\]
		and set \[ S(x,y_1,y_2,z_1)= dw(x,y_1,y_2,z_1,\psi^+_k(x,y_1,y_2,z_1))\left(1,\tilde{F}(x,y_1,y_2,z_1,\psi^+_k(x,y_1,y_2,z_1))\right).\]
		Then $S$ is definable, so, by Lemma \ref{lem:2}, there exists a sign $\alpha\in\{-,0,+\}$ and $m>0$ such that, ultimately,
		%\begin{equation*} 
			\[ 0<z_1<x^m\Rightarrow \mathrm{sign}(S(x,\gamma(x),z_1))=\alpha.\]
		%\end{equation*}
		In particular, for small $x$, $0<\varepsilon_1(x)<x^m$. So, if $w(c(x))=0$, then \[ \mathrm{sign}((w\circ c)'(x))=\mathrm{sign}(S(x,\gamma(x),\varepsilon_1(x)))=\alpha.\]
		
		This is impossible if $\alpha\neq 0$: the differentiable map $w\circ c$ vanishes infinitely many times, but its derivative would  have a constant nonzero sign when $w\circ c(x)=0$, contradicting Proposition \ref{prop:0}.
		So $\alpha=0$, i.e. $(w\circ c)'(x)=0$, whenever $w\circ c(x)=0$ for small $x$. 
		
		We get from equation (1) and system \eqref{tildeSF} 
		 that, for small $x$ and $0<z_1<x^m$:
		\[f_4(v)=d\psi_k^+(u)(1,f_1(v),f_2(v),f_3(v)) \textrm{ with } u:=(x,\gamma(x),z_1),\ v:=(u,\psi_k^+(u))\]
		But this leads again to a contradiction. Indeed, let $\zeta(x):=\psi_k^+(x,\gamma(x),\varepsilon_1(x))$. Then
		$\zeta'(x)=d\psi_k^+(x,\gamma(x),\varepsilon_1(x))(1,\gamma'(x),\varepsilon_1'(x))=f_4(x,\gamma(x),\varepsilon_1(x),\zeta(x))$, so
		$\zeta$ and $\varepsilon_2$ satisfy the same differential equation. By uniqueness of solutions,  $\zeta$ and $\varepsilon_2$ coincide since $\zeta(x)=\varepsilon_2(x)$ whenever $c(x)$ belongs to the graph of $\psi_k^+$. This means that $c(x)$ is included in the graph of $\psi_k^+$, which contradicts the fact that $h\circ c$ has no ultimate sign (recall that $h$ has constant sign over the graph of $\psi_k^+$). Claim 3.8 is proven.
	\end{proof}
	
	We now have all elements to finish the proof of Proposition \ref{prop:sign}. Let $\Theta(x):=\frac{\varepsilon(x)}{||\varepsilon(x)||}\in\mathbb S^1$. Choose a continuous function $\theta:(0,a)\to\mathbb R$ such that $\Theta(x)=(\cos \theta(x),\sin \theta(x))$. Remark that $\theta$ is in fact $C^1$ since $\Theta$ is.
	\begin{claim}\label{cla:angle}
		If $G$ has not an ultimate sign, the angle $\theta(x)$ diverges to infinity as $x$ {tends} to $0$.
	\end{claim}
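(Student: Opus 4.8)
The plan is to study the $C^1$ lift $\theta$ through its derivative and to show that the passages of $\Theta=\varepsilon/\|\varepsilon\|$ through the two vertical directions $\pm(0,1)$ --- which occur infinitely often by Claims \ref{lm:sign-H} and \ref{cla:5cell} --- all count with the same rotational sign. Since $\varepsilon$ satisfies the last two equations of \eqref{tildeSF}, a direct computation yields, for small $x$,
\[
\theta'(x)=\frac{\varepsilon_1(x)\,\varepsilon_2'(x)-\varepsilon_2(x)\,\varepsilon_1'(x)}{\|\varepsilon(x)\|^2}=\Phi(c(x)),
\]
where $\Phi(x,y_1,y_2,z_1,z_2):=\big(z_1 f_4(x,y_1,y_2,z_1,z_2)-z_2 f_3(x,y_1,y_2,z_1,z_2)\big)/(z_1^2+z_2^2)$, extended by $0$ on $\{z_1=z_2=0\}$, is definable. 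By Claim \ref{lm:sign-H}, since $G$ has no ultimate sign, $\varepsilon_1=\|\varepsilon\|\cos\theta$ vanishes along a sequence $x_n\downarrow 0$, and by Claim \ref{cla:5cell} one has $\varepsilon_2(x_n)\neq 0$; hence $\theta(x_n)\in\tfrac\pi2+\pi\mathbb{Z}$ and $\sin\theta(x_n)=\operatorname{sign}(\varepsilon_2(x_n))=\pm 1$.

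Next I would pin down $\operatorname{sign}(\theta'(x_n))$. Since $\varepsilon_1(x_n)=0$, evaluating $\Phi$ gives $\theta'(x_n)=-f_3\big(x_n,\gamma(x_n),0,\varepsilon_2(x_n)\big)/\varepsilon_2(x_n)$. Applying Lemma \ref{lem:2} to the two definable maps $(x,y_1,y_2,z)\mapsto -f_3(x,y_1,y_2,0,z)/z$ and $(x,y_1,y_2,z)\mapsto f_3(x,y_1,y_2,0,-z)/z$ produces $m>0$ and signs $\alpha^+,\alpha^-\in\{-,0,+\}$ such that, for small $x$ and $0<z<x^m$, the first has sign $\alpha^+$ and the second has sign $\alpha^-$ at $(x,\gamma(x),z)$. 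Since $\gamma$ and $\delta$ have flat contact, $|\varepsilon_2(x_n)|<x_n^m$ ultimately, so $\operatorname{sign}(\theta'(x_n))=\alpha^+$ when $\varepsilon_2(x_n)>0$ and $\operatorname{sign}(\theta'(x_n))=\alpha^-$ when $\varepsilon_2(x_n)<0$.

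The heart of the argument is then an analysis on the lift. The zeros of $\cos\theta$ are exactly the $x_n$; between two consecutive ones $\cos\theta$ keeps a constant nonzero sign, so $\theta$ stays in an open interval of length $\pi$ bounded by two consecutive points of $\tfrac\pi2+\pi\mathbb{Z}$, whose endpoints are $\theta(x_n)$ and $\theta(x_{n+1})$ (possibly equal). Thus $\theta$ diverges to $\pm\infty$ if and only if $(\theta(x_n))_n$ is unbounded, and the sign of $\theta'(x_n)$ determines on which side of $\theta(x_n)$ the curve lies just before and after $x_n$; Proposition \ref{prop:0} applied to $\cos\theta$ then forbids the ``turn-back'' patterns. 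When $\alpha^+=\alpha^-\neq 0$ one gets at once that $(\theta(x_n))_n$ is eventually strictly monotone with step $\pi$, hence $\theta\to\pm\infty$ and $\gamma,\delta$ are interlaced, as wanted. The remaining configurations --- $\alpha^+\neq\alpha^-$, or $\alpha^+\alpha^-=0$ --- must be excluded: in all of them the same bookkeeping, via Proposition \ref{prop:0}, forces $\theta'(x_n)=0$ (a local extremum of $\cos\theta$, equivalently $\varepsilon_1'(x_n)=0$) for infinitely many $n$. If $\alpha^+$ and $\alpha^-$ are both nonzero, this already contradicts $\operatorname{sign}(\theta'(x_n))\in\{\alpha^+,\alpha^-\}$. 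Otherwise one of $\alpha^+,\alpha^-$ vanishes; say (after relabeling) $\alpha^+=0$, while the bookkeeping moreover provides infinitely many zeros $x_n$ of $\varepsilon_1$ with $\varepsilon_2(x_n)>0$. Then $f_3(x,\gamma(x),0,z)=0$ for all small $x$ and all small $z>0$; using flat contact and that $z_2\equiv 0$ solves $z_2'=f_4(x,\gamma(x),0,z_2)$, the uniqueness theorem for \eqref{tildeSF} forces $\varepsilon_1\equiv 0$ near $0$, exactly as in the final step of the proof of Claim \ref{lm:sign-H}. But then $\theta$ is constant and $c(x)=(x,\gamma(x),0,\varepsilon_2(x))$ lies ultimately in a single cell of $\mathcal C$ --- in fact in one of the strips $C^\pm$, since $\varepsilon_2$ has a constant sign by uniqueness --- so that $G=h\circ c$ has an ultimate sign, contradicting our hypothesis.

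The step I expect to be the main obstacle is this last one: organising the case distinction on $(\alpha^+,\alpha^-)$, making the Proposition \ref{prop:0}/lift bookkeeping watertight when the zeros of $\cos\theta$ need not be simple, and verifying that every degenerate configuration reduces either to an immediate sign contradiction or to the uniqueness argument of Claim \ref{lm:sign-H}. Once Claim \ref{cla:angle} is proved, Proposition \ref{prop:sign} --- and with it Theorem \ref{th:main} --- follows at once from the definition of interlacement.
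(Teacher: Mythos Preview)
Your strategy is sound and can be completed, but you are making life harder than necessary by tracking \emph{all} zeros of $\cos\theta$ and hence two signs $\alpha^+,\alpha^-$. The paper short--circuits the whole case analysis by one observation: since by Claims~\ref{lm:sign-H} and~\ref{cla:5cell} the set $\{x:\varepsilon_1(x)=0\}$ meets $C^+\cup C^-$ infinitely often, at least \emph{one} of $C^+,C^-$ --- say $C^+$ --- is hit infinitely often, and it then works exclusively with the preimage $\theta^{-1}\big(\tfrac{\pi}{2}+2\pi\mathbb Z\big)$ (the $C^+$--crossings only). A single application of Lemma~\ref{lem:2} to $(x,y_1,y_2,z_2)\mapsto f_3(x,y_1,y_2,0,z_2)$ then yields one sign $\alpha$; the degenerate case $\alpha=0$ is excluded exactly by the uniqueness argument you cite (zero solves $z_1'=f_3(x,\gamma(x),z_1,\varepsilon_2(x))$), and for $\alpha\neq 0$ the sequence $(x_n)=\theta^{-1}(\tfrac\pi2+2\pi\mathbb Z)$ is discrete with $\theta(x_{n+1})-\theta(x_n)\in\{-2\pi,0,2\pi\}$. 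Ruling out $-2\pi$ uses only $\operatorname{sign}\theta'(x_{n+1})=-\alpha$; ruling out $0$ is Proposition~\ref{prop:0} applied to $\theta-\theta(x_n)$. No pairing of $(\alpha^+,\alpha^-)$, no worry about non--simple zeros, no separate bookkeeping.

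Two local issues in your write--up. First, your uniqueness step is misstated: it is not ``$z_2\equiv 0$ solves $z_2'=f_4(x,\gamma(x),0,z_2)$'' but rather that $z_1\equiv 0$ solves $z_1'=f_3(x,\gamma(x),z_1,\varepsilon_2(x))$ near any point where $\varepsilon_1=0$ and $\varepsilon_2>0$, which forces $\varepsilon_1\equiv 0$ and hence $c$ into a single cell. Second, your sentence ``the bookkeeping forces $\theta'(x_n)=0$ for infinitely many $n$'' is not what actually happens when $\alpha^+,\alpha^-$ are both nonzero and distinct: in that case $\varepsilon_1'$ has the \emph{same} nonzero sign at every zero of $\varepsilon_1$ (namely $-\alpha^+=\alpha^-$), so Proposition~\ref{prop:0} gives a direct contradiction with no intermediate $\theta'=0$. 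Your plan can be patched along these lines, but the paper's reduction to a single half--axis $C^+$ is both cleaner and what makes the argument short.
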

	\begin{proof}
		From Claim \ref{lm:sign-H}, if $G$ has not an ultimate sign, the function  $x\mapsto\varepsilon_1(x)$ vanishes infinitely many times as $x$ goes to $0$. From Claim \ref{cla:5cell}, $\varepsilon_1(x)=0 \Rightarrow c(x)\in C^+\cup C^-$, so at least one of these two cells, say $C^+$ (the proof for $C^-$ being analogous), {intersects} infinitely many times the curve $x\mapsto c(x)$ as $x$ {tends} to $0$.
		
		We first show that, for small $x$, the derivative $\theta'(x)$ has a constant nonzero sign when $c(x)\in C^+$.
		Indeed, if $\varepsilon_1(x)=0$ and $\varepsilon_2(x)>0$, then $\theta(x)=\frac{\pi}{2}\text{ mod }2\pi$, so $\theta'(x)$ has a   sign opposite to the one of $\varepsilon_1'(x)$, because $\varepsilon_1'(x)=-||\varepsilon(x)||\theta'(x)$ at such a value. So  $\theta'(x)$ has a sign opposite to that of
		$f_3(x,\gamma(x),0,\varepsilon_2(x))$ when $\varepsilon_1(x)=0$. We apply Lemma \ref{lem:2} to  the map \[(x,y_1,y_2,z_2)\mapsto f_3(x,y_1,y_2,0,z_2).\] There exists $m>0$ and a sign $\alpha\in\{-,0,+\}$ such that, for small $x$,
		%\begin{equation}
		 \[0<z_2<x^{m}\Rightarrow\mathrm{sign}(f_3(x,\gamma(x),0,z_2))=\alpha.\]
	%\end{equation}
		For small $x$, $|\varepsilon_2(x)|<x^m$, so, $\mathrm{sign}(\theta'(x))=-\alpha$ whenever $c(x)\in C^+$.
		Moreover, $\alpha\neq 0$. Otherwise, the null function would be a solution of the same differential equation $z_1'=f_3(x,\gamma(x),z_1,\varepsilon_2(x))$ as the one for $\varepsilon_1$, and, locally at any value $x$ for which  $\varepsilon_1(x)=0$, this solution coincides with $\varepsilon_1$, then everywhere by uniqueness. It would imply that $c(x)\in C^+$ ultimately, while $h$ has constant sign on $C^+$, a contradiction.
		
		For simplicity, suppose $\alpha=+$ (the proof for $\alpha=-$ is similar). Since $\theta'(x)\neq 0$ if $\theta(x)=\frac{\pi}{2}\text{ mod }2\pi$, $\theta^{-1}(\frac{\pi}{2}+2\pi\mathbb Z)$ is discrete. Let $(x_n)\to 0$ be the infinite decreasing sequence formed by the elements of $\theta^{-1}(\frac{\pi}{2}+2\pi\mathbb Z)$. By the definition of this sequence $(x_n)$ and the continuity of $\theta$, we have that $\theta(x_{n+1})-\theta(x_n)=2\pi s_n$ with $s_n\in\{-1,0,1\}$. Since $\theta'(x_{n+1})<0$, we cannot have $s_n=-1$, and, since  $\theta'(x_{n+1})\theta'(x_n)> 0$, we cannot have $s_n=0$ either.
		Hence, $\theta(x_{n+1})=\theta(x_n)+2\pi$ for all $n$, and $\lim_{n\to\infty} \theta(x_n)=+\infty$. Moreover, since $\theta(x_{n+1})>\theta(x)> \theta(x_n)$ for any $x\in(x_{n+1},x_n)$, we get: \[\lim_{x\to 0} \theta(x) = \lim_{n\to\infty}\theta(x_n)=+\infty,\]
		which ends the proof.
	\end{proof}
	
	{To conclude the proof of Proposition \ref{prop:sign}, consider $\gamma$ and $\delta$ which satisfy the  hypothesis.} Then, either any germ in $\mathcal F(x,\gamma,\delta)$ has an ultimate sign, {which is the first alternative of the proposition}, or there exists $G\in \mathcal F(x,\gamma,\delta)$ which has not an ultimate sign. Then, Claim \ref{cla:angle} shows that any continuous measure $\theta(x)$ of the angle between $\Theta(x)$ and $(1,0)$ diverges to $\pm\infty$ as $x$ {tends to} $0$. In other words, $\gamma$ and $\delta$ are interlaced, {that is to say, the second alternative of the proposition holds}.
\end{proof}

\section{Dichotomy ``interlaced or separated'' for trajectories of definable vector fields.}\label{sec:trajectories}

We consider a  vector field $\xi$ of class $C^1$ in a neighborhood of $0\in\R^3$, definable in some polynomially bounded o-minimal structure $\mathcal{R}$, and such that $\xi(0)=0$.
A \emph{trajectory} {(at zero)} of $\xi$ is the image $\Gamma=c((0,a))$ of an \emph{integral curve} $c$ of $\xi$, that is, of a parameterized $C^1$ curve $c:(0,a)\to\R^3\setminus\{0\}$, $a >0$, with $\lim_{t\to 0^{+}}c(t)=0$, $(\xi(c))^{-1}(0)=\emptyset$ and $\xi(c) \wedge c' = 0$. We {might also denote $\Ga$ by $|c|$} and say that $c$ is a parameterization of $\Ga$.
We are only interested in the behavior of $\Gamma$ near $0$, and we identify trajectories as soon as they have the same germ at $0$.
% -- 3.1 traduction
\subsection{Adapted charts and regular separation}

% cartes et param\~{A}{\copyright}trisation par une coordonn\~{A}{\copyright}e
A \emph{definable chart} $C=(V,X)$ at $0$ (a chart for short) consists of an open definable set $V\subset \mathbb R^3$ such that  $0\in\overline{V}$, and a definable mapping $X : V\to \mathbb R^3$ {which} is a diffeomorphism onto its image such that $\lim_{V\ni p\to 0}X(p) = 0$.
A chart $C=(V,(x,y_1,y_2))$ is said to be \emph{adapted to {a} trajectory $\Gamma$} if $V$ contains a representative of the germ at $0$ of $\Gamma$ and the restriction of $dx(\xi)$ to this representative is positive. 
In this situation, $\Gamma \cap V$ can be parameterized by $x$: by the Inverse Function Theorem, there is a unique parameterization $c:(0,a)\to V$ of $\Gamma\cap V$ such that $x(c(t))=t$.
We denote $\gamma_C(x)=(\gamma_{C,1}(x),\gamma_{C,2}(x)):=(y_1(c(x)),y_2(c(x)))$. Accordingly, if $C$ is adapted to {a} trajectory $\Delta$, we define $\delta_C(x)$ by $(x,\delta_C(x))\in \Delta$, and $\lim_{x\to 0}\delta_C(x)=0$. Note that, if $(V,(x,y_1,y_2))$ is adapted to $\Gamma$, then so is any other definable chart of the form $(V,(x,z_1,z_2))$.
For $p\in V\setminus (dx(\xi))^{-1}(0)$, set
\[F_C(p):=\left(f_{1C}(p) := \frac{dy_1(\xi)}{dx(\xi)}(p), f_{2C}(p):=\frac{dy_2(\xi)}{dx(\xi)}(p)\right).\]
So $\gamma_C$ and $\delta_C$ are both solutions of the differential system $(S_{F_C})$ in the sense of the previous sections.

%separation reguliere, contact plat
\begin{definition}\label{def:separatedtrajectories}~
	\begin{itemize} \item {Let $\Gamma$  be a trajectory of $\xi$. We will say that} $\Gamma$ has the \emph{regular separation property} if there exists a definable chart $C$ adapted to $\Gamma$ such that:
		\begin{itemize}
			\item[(i)] The map $\gamma_C$ has the regular separation property in the sense of Definition \ref{def:regular-flat};
			\item[(ii)] For any definable function $f:\mathbb R^3\to\mathbb R$, if $\lim_{x\to 0}f(x,\gamma_C(x))= 0$, then $\exists k\in\mathbb N,\; \exists a>0,$ such that $|f(x,\gamma_C(x))|<x^{\frac{1}{k}}$ for all $x\in(0,a)$.
		\end{itemize}
		\item {Let $\Gamma$, $\Delta$ be trajectories of $\xi$. We will say that} $\Gamma,\Delta$ have \emph{flat contact} if they admit a common adapted chart $C$ such that $\gamma_C$ and $\delta_C$ have flat contact in the sense of Definition \ref{def:regular-flat}.\end{itemize}
\end{definition}

{Notice that our definition of flat contact requires the existence of a common adapted chart.} {This condition is not always fulfilled, as shown by \cite[Example 21]{San},
	which gives a family of trajectories of an analytic vector field, all included in a common ``flat horn'', but none of these trajectories can be parametrized by any analytic coordinate}. 
Condition (ii) above {might seem superfluous, since it} is not needed to apply Theorem \ref{th:main}. However, we will see that (ii) insures that $\gamma_{C}$ has the regular separation property independently of the chart $C$. Moreover, the flat contact between $\gamma_C$ and $\delta_C$ will be shown to be independent of the adapted chart $C$ if $\Gamma$ has the regular separation property.
The need for condition (ii) appears clearly in the {following example}. 

\begin{example}
	{\em	Consider the vector field
		\[ \xi =x^2\frac{\partial}{\partial x}+y^2 x\frac{\partial}{\partial y}+z\frac{\partial}{\partial z}.\]
		If $\Gamma$ is a trajectory of $\xi$ at the origin contained in $\{x>0\}\cap\{y>0\}$, 
		then the charts $C=(\R^3,(x,y,z))$ and $C'=(\R^3,(y,x,z))$ are both adapted to $\Gamma$.
		{One} can describe {these} trajectories as a family $(\Gamma_{\alpha,\beta})$ indexed by parameters $(\alpha,\beta)\in \mathbb R^*_+\times\mathbb R$, where
		$\Gamma_{\alpha,\beta}$ has respective parameterizations in the charts $C$ and $C'$ given by:
		 \begin{align*}
			(x,\gamma_C^{\alpha,\beta}(x)) 
			 =  \left(x,\left(\log\frac{\alpha}{x}\right)^{-1}, \beta e^{-\frac{1}{x}}\right),\\
			(y,\gamma_{C'}^{\alpha,\beta}(y))
			= \left(y,\alpha e^{-\frac{1}{y}}, \beta e^{-\frac{1}{\alpha} e^{\frac{1}{y}}}\right).
		\end{align*}
		
		For any $\alpha>0$, the map $\gamma_C^{\alpha,0}$ satisfies the {regular separation property} 
		(Definition \ref{def:regular-flat}), so 
		$\Gamma_{\alpha,0}$ satisfies the condition (i) for regular separation of trajectories  (Definition \ref{def:separatedtrajectories}). But the condition (ii) is not satisfied by $\gamma_C^{\alpha,0}$  (with $f(x,y,z)=y$ for instance).
		This does not {yet show that $\Gamma_{\alpha,0}$ has not} regular separation property 
		({since} Definition \ref{def:separatedtrajectories} {only requires} one chart), but it will follow from 
		Proposition \ref{propo:reg-sep-cdv} below.
		
		{In} the chart $C'$, {note} that none among the functions $\gamma_{C'}^{\alpha,0}$ has regular separation property
		since their first coordinate is neither zero nor bounded {from} below by a power of $y$. {This} 
		shows that condition (i) might depend on the chart $C$.
		
		{The present example also shows} that flat contact between $\gamma_C$ and $\delta_C$ 
		depends on the chart $C$. 
		{In} the chart $C'$, if $\alpha\neq\alpha'$, the maps  $\gamma^{\alpha,\beta}_{C'}$ and $\gamma^{\alpha',\beta'}_{C'}$ 
		have flat contact, while the corresponding $\gamma^{\alpha,\beta}_C$ and $\gamma^{\alpha',\beta'}_C$
		do not have flat contact (the difference {between} their first coordinate is not flat with respect to $x$). 
		However, the trajectories $\Gamma_{\alpha,\beta}$ and $\Gamma_{\alpha',\beta'}$ have 
		flat contact according to our definition \ref{def:separatedtrajectories}, since their parameterizations have flat contact in at least one chart.}
\end{example}

\begin{proposition}\label{propo:reg-sep-cdv}
	{Assume that $\Gamma$ has regular separation property. If}  $C$ is a chart adapted to $\Gamma$, then $\gamma_{C}$ satisfies (i) and (ii) of Definition \ref{def:separatedtrajectories}.
	%  has the regular separation property. In fact, 
\end{proposition}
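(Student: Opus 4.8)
The plan is to show that condition (i) and condition (ii) of Definition~\ref{def:separatedtrajectories} are intrinsic properties of the trajectory $\Gamma$, independent of the particular adapted chart used. By hypothesis there is one adapted chart $C_0=(V_0,(x,y_1,y_2))$ witnessing the regular separation property of $\Gamma$, i.e.\ $\gamma_{C_0}$ satisfies (i) and (ii). Given another adapted chart $C=(V,(w,u_1,u_2))$, we must transport these properties. The key point is that passing from $C_0$ to $C$ amounts to composing with a definable diffeomorphism: on the overlap $V_0\cap V$ (which still contains a representative of $\Gamma$ near $0$, since both charts are adapted), the change of coordinates $\Phi=X\circ X_0^{-1}$ is definable, and the new ``time'' coordinate is $w=w(x,\gamma_{C_0,1}(x),\gamma_{C_0,2}(x))=:\omega(x)$, where $w:\R^3\to\R$ is definable. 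First I would establish that $\omega$ is a definable germ in $\mathcal F(x,\gamma_{C_0})$ with $\omega(x)\to 0$, and that, because $C$ is adapted to $\Gamma$, $\omega$ is strictly increasing near $0$; by the regular separation property of $\gamma_{C_0}$, $\omega(x)$ is bounded below by some power $x^N$ and, by condition (ii) applied to $w$, also satisfies $\omega(x)<x^{1/M}$ ultimately. Hence $\omega$ is a genuine reparameterization comparable to $x$ both above and below by powers.

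Next I would verify (i) for $\gamma_C$. Let $f:\R^3\to\R$ be definable with domain containing the graph of $\gamma_C$. Pulling back through $\Phi$, the germ $f(w,\gamma_C(w))$ evaluated along $\Gamma$ equals $\tilde f(x,\gamma_{C_0}(x))$ for a definable $\tilde f$ (namely $\tilde f=f\circ\Phi$ composed with the projection, defined near the graph of $\gamma_{C_0}$). By regular separation of $\gamma_{C_0}$, the germ $\tilde f(x,\gamma_{C_0}(x))$ is either identically zero or bounded below by $x^k$ for some $k$. In the first case $f(w,\gamma_C(w))\equiv 0$ as a germ. In the second case, using the double power comparison $x^N\le \omega(x)<x^{1/M}$, we get $|f(w,\gamma_C(w))|=|\tilde f(x,\gamma_{C_0}(x))|\ge x^k$, and rewriting $x$ in terms of $w=\omega(x)$ via $x>w^{1/?}$ type inequalities (again from the power comparison, after noting $\omega$ is invertible with definable inverse whose growth is polynomially controlled) yields $|f(w,\gamma_C(w))|\ge w^{k'}$ for a suitable $k'$, which is exactly (i) for $\gamma_C$. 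Condition (ii) for $\gamma_C$ is argued identically: if $f(w,\gamma_C(w))\to 0$, then $\tilde f(x,\gamma_{C_0}(x))\to 0$, so by (ii) for $\gamma_{C_0}$ it is $<x^{1/k}$ ultimately, and the power comparison between $x$ and $w$ converts this to $<w^{1/k'}$.

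The main obstacle is bookkeeping the two-sided polynomial comparison between the old parameter $x$ and the new parameter $w=\omega(x)$, and making sure this comparison is preserved when inverting $\omega$ — this is precisely where condition (ii) is indispensable, since without the upper bound $\omega(x)<x^{1/M}$ one cannot bound $x$ from below by a power of $w$, and the argument collapses (as the worked example with $\omega(x)=(\log(\alpha/x))^{-1}$ versus $\omega(x)=\alpha e^{-1/x}$ illustrates: the first fails (ii) and indeed behaves badly under the chart change, the second is fine). I would also need the elementary fact, provable by o-minimal monotonicity, that a definable strictly monotone germ $\omega$ with $x^N\le\omega(x)\le x^{1/M}$ has a definable inverse $\omega^{-1}$ satisfying a comparison of the same shape, so that composition with $\omega^{-1}$ maps germs bounded below by a power of $x$ to germs bounded below by a power of $w$, and similarly for the ``$<x^{1/k}$'' estimates. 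Once these comparisons are in place, conditions (i) and (ii) transfer mechanically, completing the proof.
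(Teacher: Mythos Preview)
Your proposal is correct and follows essentially the same route as the paper: fix a reference chart $C_0$ witnessing (i) and (ii), observe that along $\Gamma$ the value of any definable function is the same in both charts, and use (i) and (ii) in $C_0$ applied to the new time coordinate $w$ to obtain the two-sided polynomial comparison $x^N\le \omega(x)\le x^{1/M}$, from which both conditions transfer by simple substitution. One small inaccuracy: you call $\omega$ a ``definable strictly monotone germ'' and invoke o-minimal monotonicity for its inverse, but $\omega(x)=w(x,\gamma_{C_0}(x))$ need not be definable, since $\gamma_{C_0}$ is not in general; fortunately you do not actually need this---the inversion of the bounds $x^N\le w\le x^{1/M}$ to $w^M\le x\le w^{1/N}$ is elementary real analysis and requires no definability, so the argument goes through unchanged once you drop that clause.
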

\begin{proof}
	Let $C=(V,X=(x,y_1,y_2))$ be {a chart adapted to} $\Gamma$ and $C'=(V',X'=(x',y_1',y_2'))$ be a chart satisfying the conditions (i) and (ii) of Definition \ref{def:separatedtrajectories}. Let $f$ be a definable function whose domain contains $\Gamma$, $f_C=f\circ X^{-1}$, $f_{C'}=f\circ X'^{-1}$. {Note} that
	$f_C(t,\gamma_C(t))=f_{C'}(t',\gamma_{C'}(t'))$ with $t'=x'(X^{-1}(t,\gamma_C(t)))$, or equivalently, $t = x( X'^{-1}(t',\gamma_{C'}(t')))$. So $t$ and $t'$ are simultaneously positive and $t$ {tends to} $0$ if and only if $t'$ {does so}.
	
	In particular, if $f_{C'}(t',\gamma_{C'}(t'))=0$ for small $t'$, then $f_C(t,\gamma_C(t))=0$ for small $t$.
	Otherwise, from condition (i), there is an $n$ such that $|f_C(t,\gamma_C(t))|> t'^n$ for small $t'$.
	But, from condition (ii), since $x\circ X'^{-1}$ is a definable function whose domain contains $\Gamma$, there is a $k$ such that
	$t < t'^{1/k}$, so $t'>t^k$. Finally,
	if $f_C(t,\gamma_C(t))$ doesn't vanish identically, $|f_C(t,\gamma_C(t))|> t^{kn}$, so $\gamma_C$ has regular separation {property} (Definition \ref{def:regular-flat}).
	
	Moreover, by (i), there exists $N>0$ such that $t>t'^N$. If  $\lim_{t\to 0}f(t,\gamma_C(t))= 0$, there exists $M>0$ such that $|f(t,\gamma_C(t))|<t'^{\frac{1}{M}}$, again by (ii). We {obtain} that $|f(t,\gamma_C(t))|<t^{\frac{1}{NM}}$ which shows property (ii) for $\gamma_C$. 
\end{proof}

\subsection{Flat contact and interlacement}
\begin{proposition}\label{prop:flat}
	{Assume that $\Gamma$ has regular separation property and that $\Delta$ has flat contact with $\Gamma$.} Then, for any chart $(V,T)$ adapted to $\Gamma$, there exists $V'\subset V$ such that $C':=(V',T)$ is adapted to $\Gamma$ and $\Delta$, and $\gamma_{C'}$ and  $\delta_{C'}$ have flat contact.
\end{proposition}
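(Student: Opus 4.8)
The plan is to fix, besides the given chart $(V,T)$ with $T=(x,y_1,y_2)$ ($C^1$ diffeomorphism), a \emph{common} adapted chart $C_0=(V_0,X_0)$, $X_0=(x^0,y_1^0,y_2^0)$, witnessing the flat contact of $\Delta$ with $\Gamma$, so that $\gamma_{C_0}-\delta_{C_0}$ is flat in the sense of Definition~\ref{def:regular-flat}. By Proposition~\ref{propo:reg-sep-cdv} applied to $(V,T)$ and to $C_0$, both $\gamma_C$ and $\gamma_{C_0}$ satisfy (i) and (ii) of Definition~\ref{def:separatedtrajectories}; in particular each has the regular separation property. The workhorse throughout will be the following elementary consequence of regular separation together with polynomial boundedness, used over and over: \emph{if $h:\R^3\to\R$ is definable with domain containing the graph of $\gamma_C$ (resp.\ $\gamma_{C_0}$) and $h\circ\gamma_C$ does not vanish near $0$, then $t^{q}\le|h(t,\gamma_C(t))|\le t^{-p}$ for small $t$ and suitable $p\le q$ in $\N$; and if moreover $h(t,\gamma_C(t))\to 0$, then $|h(t,\gamma_C(t))|\le t^{1/p}$.} (The lower bound is regular separation, the bound $t^{-p}$ is regular separation applied to $1/h$ on the set where it is defined, and the fractional bound is condition (ii).) A first batch of applications: the reparametrization $\phi$ sending the $T$-coordinate of a point of $\Gamma$ to its $x^0$-coordinate is definable in $t$ through $\gamma_C$, positive, and tends to $0$, hence $t^{N}\le\phi(t)\le t^{1/N'}$ and, inverting, $u^{N'}\le\phi^{-1}(u)\le u^{1/N}$; and, parametrizing $\Gamma$ by its $x^0$-coordinate $s$, i.e.\ writing $P_\Gamma(s):=X_0^{-1}(s,\gamma_{C_0}(s))$, \emph{every} quantity definable in $s$ through $\gamma_{C_0}$ and positive lies between two powers of $s$ (a negative power if it diverges): the distances from $P_\Gamma(s)$ to $\R^3\setminus V$, to $\R^3\setminus V_0$, to $\overline{\{dx(\xi)\le 0\}}$; the supremum of $\|DX_0^{-1}\|$ over the ball about $X_0(P_\Gamma(s))$ of radius half the distance of that point to $\R^3\setminus X_0(V_0)$, and the analogous supremum of $\|DT\|$ about $P_\Gamma(s)$; the speeds of $\gamma_C$ and of the curve $s\mapsto P_\Gamma(s)$; and so on.

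The crux is to transfer the flatness of $\gamma_{C_0}-\delta_{C_0}$ out of the chart $C_0$. Put $P_\Delta(s):=X_0^{-1}(s,\delta_{C_0}(s))$, so that $X_0(P_\Gamma(s))=(s,\gamma_{C_0}(s))$ and $X_0(P_\Delta(s))=(s,\delta_{C_0}(s))$ are two points with the same $x^0$-coordinate $s$ whose Euclidean distance $\|\gamma_{C_0}(s)-\delta_{C_0}(s)\|$ is smaller than any power of $s$, while the distance of the first to $\R^3\setminus X_0(V_0)$ is, by the workhorse, bounded below by a fixed power of $s$. Hence, for $s$ small, the whole segment joining these two points stays inside $X_0(V_0)$ and inside a ball on which $\|DX_0^{-1}\|$ is at most a negative power of $s$; the mean value inequality then gives $\|P_\Delta(s)-P_\Gamma(s)\|\le s^{-M}\|\gamma_{C_0}(s)-\delta_{C_0}(s)\|$, again smaller than any power of $s$. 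This is the step I expect to be the main obstacle: it amounts to controlling the modulus of continuity of the chart transition near the origin along $\Gamma$, and the entire purpose of conditions (i)+(ii) of regular separation — via the workhorse — is precisely that such control is available on \emph{definable} tubes around $\Gamma$ whose radius still dominates the flat-contact distance.

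The remainder is bookkeeping with the same tools. First I claim $V':=V\cap\{dx(\xi)>0\}$ (open and definable) works: since $d(\,\cdot\,,\R^3\setminus V)$ and $d(\,\cdot\,,\overline{\{dx(\xi)\le0\}})$ are $1$-Lipschitz and definable, their values at $P_\Delta(s)$ exceed their values at $P_\Gamma(s)$ minus $\|P_\Delta(s)-P_\Gamma(s)\|$, hence remain positive for small $s$; so $\Delta$ lies in $V'$ near $0$ (the $1$-Lipschitz distance functions are what allow us to dodge the mere continuity of $dx(\xi)$), and $C':=(V',T)$ is a definable chart adapted to both $\Gamma$ and $\Delta$, with $\gamma_{C'}=\gamma_C$. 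Finally, for the flat contact of $\gamma_{C'}$ and $\delta_{C'}$: write $\sigma(s):=x(P_\Delta(s))$, so that $x(P_\Gamma(s))=\phi^{-1}(s)$ and
\[
|\sigma(s)-\phi^{-1}(s)|\le\|T(P_\Delta(s))-T(P_\Gamma(s))\|\le\big(\sup\|DT\|\big)\,\|P_\Delta(s)-P_\Gamma(s)\|,
\]
which is flat in $s$; combined with $u^{N'}\le\phi^{-1}(u)\le u^{1/N}$ this forces $\sigma$ — an increasing bijection near $0$, since $x$ and $x^0$ both increase along the oriented trajectory $\Delta$ — to be polynomially comparable to the identity, so that ``flat in $s$'' and ``flat in $x=\sigma(s)$'' are equivalent. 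Then, for $T$-coordinate $x=\sigma(s)$,
\[
\|\gamma_{C'}(x)-\delta_{C'}(x)\|=\|T(P_\Gamma(\phi(x)))-T(P_\Delta(s))\|\le\|T(P_\Gamma(\phi(x)))-T(P_\Gamma(s))\|+\|T(P_\Gamma(s))-T(P_\Delta(s))\|;
\]
the last summand is flat in $s$ as just noted, while the first equals $\|(x-\phi^{-1}(s),\ \gamma_C(x)-\gamma_C(\phi^{-1}(s)))\|$, which is flat because $|x-\phi^{-1}(s)|$ is flat and $\gamma_C$ has speed bounded by a negative power of its argument. Hence $\|\gamma_{C'}(x)-\delta_{C'}(x)\|$ is smaller than any power of $x$, which is exactly the asserted flat contact.
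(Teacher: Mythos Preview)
Your proof is correct and follows essentially the same strategy as the paper: the same triangle-inequality decomposition (insert the point of $\Gamma$ sharing the ``other chart'' parameter with the point of $\Delta$), and the same systematic use of regular separation to bound every definable positive quantity along $\Gamma$ by powers of the parameter. The only cosmetic difference is that where you bound $\|DX_0^{-1}\|$ and $\|DT\|$ explicitly (via definable suprema over balls) and then apply the mean value inequality, the paper instead packages each needed estimate directly into a definable ``radius'' function such as $\rho(p)=\sup\{r>0:\forall q\in B_C(p,r),\ \|T(q)-T(p)\|<t(q)^n\}$ and applies regular separation to that; this avoids ever mentioning derivatives of the chart transitions, but the content is the same.
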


\begin{proof}
	Let $C=(W, X=(x,y_1,y_2))$ be a chart adapted to both $\Gamma$ and $\Delta$, such that $\gamma_C$ and $\delta_C$ have flat contact. Let $(V,T=(t,z_1,z_2))$ be a chart adapted to $\Gamma$. We may assume that  $V$ is bounded.
	
	For $p\in V\cap W$ and $r>0$, we set
	\[ 
	%\begin{align}%{c}
		B_C(p,r)=\{q\in V\cap W;\; ||X(q)-X(p)|| < r\}.
	%\end{align}
	\]
	Let $V'=\{p\in V\cap W;\; dt(\xi)(p)>0\}$, and $r(p)=\sup\{r>0, B_C(p,r)\subset V'\}$. Then $r$ is definable and positive over $\Gamma\cap V\cap W$, so, by Proposition \ref{propo:reg-sep-cdv}, $r(X^{-1}(x,\gamma_C(x)))>x^N$ for some $N>0$ and for all sufficiently small $x$. On the other hand, by flat contact, $||\delta_C(x)-\gamma_C(x)||<x^N$ for sufficiently small $x$. So, $X^{-1}(x,\delta_C(x))$ ultimately belongs to $V'$, which proves that $V'$ contains a representative of the germ at $0$ of $\Delta$. Hence, $C'=(V',(t,z_1,z_2))$ is adapted to $\Delta$.

	Let $n\ge 1$. Let us show that $||\gamma_{C'}(t)-\delta_{C'}(t)||/t^n$ is bounded for small $t$.
	Given $t$, we set $h_t=T^{-1}(t,\delta_{C'}(t))\in \Delta$, $X(h_t)=(x,\delta_{C}(x))$ and  $g_x=X^{-1}(x,\gamma_C(x))\in\Gamma$,  $g_t=T^{-1}(t,\gamma_{C'}(t))\in \Gamma$. Moreover, we set $t'=t(g_x)$ (so $T(g_x)=(t',\gamma_{C'}(t')))$. Note that $x$, $g_x$, $g_t$, $t'$ all depend on the given data $t$. 
	In what follows, we let $t$, $t'$ and $x$ vary, but keeping the previous relations between them. %\textcolor{red}{picture ??}
	
	We have
	$||\gamma_{C'}(t)-\delta_{C'}(t)||=||T(g_t)-T(h_t)|| \le ||T(g_t)-T(g_x)|| +||T(g_x)-T(h_t)||$. We get a  bound for the two terms
	$ ||T(g_t)-T(g_x)||$ and $||T(g_x)-T(h_t)||$
	independently.
	
	{\bf Bound for $||T(g_x)-T(h_t)||$.} For $p\in V'$, let \[ \rho(p) := \sup\{r>0;\; \forall q\in B_C(p,r),\ ||T(q)-T(p)||<t(q)^n\}.\]
	The map $\rho$ is definable, and positive over $\Gamma$, so, ultimately, $\rho(g_x)>x^m$ for some $m$  by regular separation of $\Gamma$, while $||X(g_x)-X(h_t)||< x^m$, i.e. $g_x\in B_C(h_t,x^m)$, by flat contact between $\gamma_C$ and $\delta_{C}$. Hence for small $x$, $||T(g_x)-T(h_t)||<t^n$,
	and, in particular, $||T(g_x)-T(h_t)||/t^n$ is bounded as $t$ tends to $0$.

	{\bf Bound for $ ||T(g_t)-T(g_x)||$.}
	Let
	\[ s := \displaystyle \sqrt{\frac{dz_1(\xi)^2+dz_2(\xi)^2}{dt(\xi)^2}}\]
	so $||{(\gamma_{C'}})'(t)||=s(g_t)$.
	The function $\frac{1}{s}$ is definable and positive over $\Gamma$, so, by regular separation and Proposition \ref{propo:reg-sep-cdv}, there exists
	$\alpha >0$ such that $\frac{1}{s(g_t)}>t^{\alpha}$, i.e. $s(g_t)<t^{-\alpha}$, for small $t$.
	We set
	\[ R(p) := \sup\{r>0;\; \forall q\in B_C(p,r),\ |t(p)-t(q)|<t(q)^{n+\alpha}\}.\]
	Again, the map $R$ is definable, and positive over $\Gamma$, so, by regular separation,
	there exists $m>0$ such that $R(g_x)>x^m$ for small $x$, while, by flat contact, $||\delta_{C}(x)-\gamma_{C}(x)||<x^m$, i.e.  $||X(h_t)-X(g_x)||<R(g_x)$. In other words, $h_t\in B(g_x,R(g_x))$, 
	%Then we have $g_x\in B_C(p,x^m)$,
	so, for small $x$ (hence small $t$),
	$|t(g_x)-t(h_t)|<t(h_t)^{n+\alpha}$, i.e. $|t'-t|<t^{n+\alpha}$. But	 
	\begin{align}
		||T(g_x)-T(g_t)||^2 & = & |t'-t|^2+||\gamma_{C'}(t')-\gamma_{C'}(t)||^2\notag \\
		& \le & |t'-t|^2+ \left(\sup_{\tau\in(t,t')} ||{(\gamma_{C'}})'(\tau)||\cdot |t'-t|\right)^2\notag \\
		& \le & \left(1+ \max(t^{-\alpha},t'^{-\alpha})^2\right)|t'-t|^2.\notag
	\end{align}	
	For small $t$, $|t-t'|<t^{n+\alpha}$ and $1+\max(t^{-\alpha},t'^{-\alpha})^2<Mt^{-2\alpha}$ for some $M\in\mathbb R$, because $|t'-t|<t^{n+\alpha}$ also implies $t/t'$ bounded. So we finally obtain that
	\[ ||T(g_x)-T(g_t)||\le Mt^{-\alpha}t^{n+\alpha}\le Mt^n.\]
	In particular, $||T(g_x)-T(g_t)||/t^n$ is bounded as $t$ tends to $0$, which {concludes} the proof that $\gamma_{C'}$ and $\delta_{C'}$ have flat contact.
\end{proof}

\begin{proposition}\label{prop:interlacement}
	{Assume that $\Gamma$ has regular separation property and that $\Delta$ has flat contact with $\Gamma$.} Then the property of interlacement of $\gamma_C$ and $\delta_C$ does not depend on the choice of a chart $C$ adapted to $\Gamma$.
\end{proposition}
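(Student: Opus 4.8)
The plan is to reduce to finitely many \emph{elementary} changes of adapted chart and to show each preserves interlacement. Since $\Gamma$ has the regular separation property, Proposition~\ref{propo:reg-sep-cdv} gives that $\gamma_C$ has it for every adapted chart $C$, and Lemma~\ref{lem:1} (applied to the definable sets $\{f<c\}$, $c\in\Q$) then shows that every germ of the form $f(x,\gamma_C(x))$ with $f$ definable — I will call these the \emph{germs over $\Gamma$}; they form a ring stable under definable operations — has an ultimate sign and a limit in $\R\cup\{\pm\infty\}$. By Proposition~\ref{prop:flat} we may assume all the charts we use are adapted to both $\Gamma$ and $\Delta$ with flat contact. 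Given two such charts $C=(\cdot,(x,y_1,y_2))$ and $C'=(\cdot,(t,z_1,z_2))$, I factor the change as $C\to(\cdot,(t,y_1,y_2))\to(\cdot,(t,z_1,z_2))=C'$, the first step keeping the last two coordinates and the second keeping the first; the intermediate datum is a legitimate chart when $\partial t/\partial x$ does not vanish along $\Gamma$, and since this is a germ over $\Gamma$ (hence has a limit) I can, if necessary, insert one more step $\to(\cdot,(x+ct,y_1,y_2))\to(\cdot,(t,y_1,y_2))$ for a generic constant $c>0$; each chart so produced is again adapted to $\Gamma$ and $\Delta$ with flat contact (this follows from the estimate of the next paragraph). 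So it suffices to treat an elementary change $C\to C'$ that modifies either only the first coordinate or only the last two.

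\textbf{The key relation.} Let $\Psi$ be the transition diffeomorphism of an elementary change, and let $u(x)$, $v(x)$ be the (increasing, definable-in-$\gamma_C$ resp.\ $\delta_C$) functions expressing the new first coordinate along $\Gamma$, resp.\ $\Delta$, in terms of $x$. Expanding $\Psi$ to first order along the vertical segment from $(x,\gamma_C(x))$ to $(x,\delta_C(x))$, together with the corresponding expansion of the reparametrization $u^{-1}\circ v$, one obtains
\[
\varepsilon_{C'}(u(x))\;=\;M_0(x)\,\varepsilon_C(x)\;+\;\theta(x),
\]
where $M_0(x)$ is a $2\times2$ matrix with entries germs over $\Gamma$ — for a change of the last two coordinates it is the lower right block of $D\Psi$ along $\Gamma$ and $u(x)=x$, while for a change of the first coordinate it is $I$ minus an explicit rank-one term built from $F_C$, $u'$ and the partial derivatives of $t$. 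The point is that every error that appears — moduli of continuity of definable maps sampled at scale $\|\varepsilon_C(x)\|$, the gap $v(x)-u(x)$, the discrepancy $u^{-1}(v(x))-x$, the variation of $\gamma_C,\delta_C,u',F_C$ over intervals of that length — is bounded by $\|\varepsilon_C(x)\|$ (or by $1$, for moduli of continuity) times a definable positive germ over $\Gamma$: flat contact makes $\|\varepsilon_C(x)\|$ smaller than any power of $x$, while regular separation forces each of those germs over $\Gamma$ to be at most polynomial in $1/x$; hence $\|\theta(x)\|$ is a flat multiple of $\|\varepsilon_C(x)\|$. The same dichotomy for $\det M_0$ shows it is ultimately of constant sign or identically zero, and it is nonzero: for a change of the last two coordinates $\det M_0=\det(\text{block of }D\Psi)\neq0$ automatically, and for a change of the first coordinate $\det M_0=(\partial t/\partial x)/u'$, nonzero precisely because $(\cdot,(t,y_1,y_2))$ is a chart. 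Therefore $M_0(x)$ is invertible for small $x$ and $\sigma_{\min}(M_0(x))=1/\|M_0(x)^{-1}\|$, a definable positive germ over $\Gamma$, is bounded below by a power of $x$; consequently $\|\theta(x)\|/\bigl(\sigma_{\min}(M_0(x))\|\varepsilon_C(x)\|\bigr)\to0$, i.e.\ the angle between $\varepsilon_{C'}(u(x))$ and $M_0(x)\varepsilon_C(x)$ tends to $0$.

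\textbf{Angular comparison.} Write a polar decomposition $M_0(x)=R(x)P(x)$ with $R$ orthogonal and $P$ symmetric positive definite, and diagonalize $P(x)=Q(x)\Lambda(x)Q(x)^{\mathsf T}$ with $\Lambda(x)=\diag(\lambda_1(x),\lambda_2(x))$, $\lambda_i(x)>0$; the entries of $R,Q,\Lambda$ are definable functions of those of $M_0$, hence germs over $\Gamma$, hence convergent, so $R(x)\to R_\infty$ and $Q(x)\to Q_\infty$ in the orthogonal group and the continuous lifts of their rotation angles have finite limits. For a $C^1$ curve $c(x)\neq0$, the continuous angular function of $x\mapsto M_0(x)c(x)$ equals that of $c$ plus the (convergent) rotation angles of $R(x)$ and $Q(x)^{\mathsf T}$ plus the angular distortion of $\Lambda(x)$, the latter always of modulus $<\pi/2$ since a positive diagonal matrix preserves the four closed quadrants. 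Hence $x\mapsto M_0(x)c(x)$ has angular function tending to $\pm\infty$ iff $c$ does; taking $c=\varepsilon_C$, using the previous paragraph and the fact that the increasing reparametrization $x\mapsto u(x)$ does not affect this property, we get that $\gamma_{C'},\delta_{C'}$ are interlaced iff $\gamma_C,\delta_C$ are. Composing over the finitely many elementary moves proves the proposition.

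\textbf{Main obstacle.} The heart is the second paragraph: making precise that the passage from $\varepsilon_C$ to $\varepsilon_{C'}$ is, up to a genuinely negligible perturbation, the application of a single matrix $M_0(x)$ that stays uniformly invertible along $\Gamma$. Both hypotheses are indispensable there — flat contact is what makes every error flat, and regular separation is what keeps all the interfering definable quantities, and in particular $1/\sigma_{\min}(M_0)$, merely polynomial in $1/x$, so that the "flat $\times$ polynomial" errors still tend to $0$ after division by $\|\varepsilon_C(x)\|$. The bookkeeping of these error terms (and, for a change of the first coordinate, the exact identification of the rank-one correction and of $\det M_0$) is the only substantial computation.
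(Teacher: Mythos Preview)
Your route is quite different from the paper's, and considerably heavier. The paper simply invokes Theorem~\ref{th:main}: if $\gamma_C,\delta_C$ are interlaced, some coordinate difference (e.g.\ $\gamma_{C,2}-\delta_{C,2}$) vanishes on a sequence $(t_{2n})\to 0$ but not on another sequence $(t_{2n+1})\to 0$; expressing that coordinate through the transition map exhibits an element of $\mathcal F(x,\gamma_{C'},\delta_{C'})$ with no ultimate sign, so by Theorem~\ref{th:main} the pair $\gamma_{C'},\delta_{C'}$ must be interlaced. The only work is a short chain of charts (the paper's $C_0\to\cdots\to C_6$) factoring an arbitrary transition through moves that either keep the first coordinate fixed (Claim~\ref{claim:intx}) or swap the first two (Claim~\ref{claim:intxy}); each link is three lines. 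What your direct angular analysis buys is independence from Theorem~\ref{th:main}; what it costs is the whole of your second paragraph.

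Two genuine repairs are needed. First, your ``insert $x+ct$'' fix does not work: if $\partial t/\partial x\equiv 0$ along $\Gamma$ then $(t,y_1,y_2)$ is simply not a chart near $\Gamma$, and one computes $\partial t/\partial(x+ct)=(\partial t/\partial x)/(1+c\,\partial t/\partial x)$, still zero, so the inserted step changes nothing. In fact you do not need the factorization at all: your relation $\varepsilon_{C'}(u(x))=M_0(x)\varepsilon_C(x)+\theta(x)$ holds for an arbitrary transition $\Psi$, with $M_0$ the last-two-columns block of $D_{23}\Psi-F_{C'}\cdot D_1\Psi$, and the identity $D\Psi\cdot(1,F_C)=u'(1,F_{C'})$ along $\Gamma$ shows $\ker M_0=0$ whenever $D\Psi$ is invertible --- so run your second and third paragraphs once, for the full $\Psi$. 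Second, your bound on $\theta$ is not justified as written: the modulus of continuity of $D\Psi$ at scale $\|\varepsilon_C(x)\|$ is \emph{not} a germ over $\Gamma$ alone, and ``$\le 1\times$ (germ over $\Gamma$)'' yields only $\|\theta\|\le C\|\varepsilon_C\|$, which is insufficient after dividing by $\sigma_{\min}(M_0)\sim x^k$. What actually works is a Lemma~\ref{lem:2}-style trick (as in the proof of Proposition~\ref{prop:flat}): the definable function $r_k(p)=\sup\{r>0:\|D\Psi(q)-D\Psi(p)\|<x(p)^k\text{ for all }q\in B(p,r)\}$ is positive along $\Gamma$, hence $\ge x^{m_k}$ by regular separation, and flatness of $\|\varepsilon_C\|$ then forces the modulus below $x^k$ for every $k$. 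With these two fixes your argument goes through.
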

Our proof is based on the two following claims, similar to Lemmas 1.7 and 1.8 in \cite{Can-Mou-San2}. For both of them, we fix a chart $C=(V,(x,y_1,y_2))$ adapted to $\Gamma$, which we assume -- according to Proposition \ref{prop:flat} 
-- to be also adapted to $\Delta$.  
\begin{claim}\label{claim:intx} Let $C'=(V',(x,z_1,z_2))$ be another chart adapted to both $\Gamma$ and  $\Delta$. If  $\gamma_C$ and $\delta_C$ are interlaced, then so are $\gamma_{C'}$ and $\delta_{C'}$.
\end{claim}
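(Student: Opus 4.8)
Recall that, by hypothesis, $C=(V,(x,y_1,y_2))$ is adapted to both $\Gamma$ and $\Delta$ with $\gamma_C,\delta_C$ having flat contact, and $C'=(V',(x,z_1,z_2))$ is adapted to both. Since $C$ and $C'$ share the same first coordinate function $x$, the transition diffeomorphism between them has the form $X'\circ X^{-1}(x,y_1,y_2)=(x,\phi(x,y_1,y_2))$ for a definable map $\phi$ whose partial maps $\phi(x,\cdot)$ are $C^1$ diffeomorphisms onto their images; in particular $A(x):=D_y\phi(x,\gamma_C(x))\in GL_2(\R)$ is defined for small $x$. Writing $\eps_C:=\delta_C-\gamma_C$, $\eps_{C'}:=\delta_{C'}-\gamma_{C'}$ and $\psi(x,z):=\phi(x,\gamma_C(x)+z)-\phi(x,\gamma_C(x))$, one has $\eps_{C'}(x)=\psi(x,\eps_C(x))$ and $D_z\psi(x,0)=A(x)$. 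The plan is to show that $\phi$ distorts the angular behaviour of $\eps_C$ only by a bounded amount, so that spiralling of $\eps_C$ forces spiralling of $\eps_{C'}$.

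\emph{Step 1: the direction of $\eps_{C'}(x)$ stays close to that of $A(x)\Theta_C(x)$, where $\Theta_C(x):=\eps_C(x)/\|\eps_C(x)\|$.} As $r\to0^+$, the mean value inequality and the continuity of $D_y\phi$ give $\psi(x,r\Theta)/r\to A(x)\Theta$ uniformly in $\Theta\in\SSS^1$, so the definable function $\mu(x,r):=\sup_{\Theta\in\SSS^1}\angle\big(\psi(x,r\Theta),\,A(x)\Theta\big)$ tends to $0$ as $r\to0$ for each fixed small $x$. Let $\nu(x)$ be the supremum of those $r\le1$ such that $\psi(x,\cdot)$ does not vanish on $\{0<\|z\|\le r\}$ and $\mu(x,\cdot)\le\pi/4$ on $(0,r]$; then $\nu$ is definable and positive along $\gamma_C$. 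By Proposition~\ref{propo:reg-sep-cdv}, $\gamma_C$ has the regular separation property of Definition~\ref{def:regular-flat}, so $\nu(x)>x^m$ ultimately for some $m$, while flat contact of $\gamma_C,\delta_C$ yields $\|\eps_C(x)\|<x^m$ ultimately. Hence $\eps_{C'}(x)=\psi(x,\|\eps_C(x)\|\,\Theta_C(x))$ is nonzero and its direction lies within angle $\pi/4$ of that of $A(x)\Theta_C(x)$ for all small $x$.

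\emph{Step 2: the linear part does not destroy divergence of the angle.} The sign $\sigma\in\{-1,+1\}$ of $\det D_y\phi(x,\gamma_C(x))$ is definable and nonvanishing along $\gamma_C$, hence ultimately constant; accordingly $\Theta\mapsto A(x)\Theta/\|A(x)\Theta\|$ is, for small $x$, a degree-$\sigma$ homeomorphism of $\SSS^1$. Choosing a jointly continuous lift $\tilde g:(0,a)\times\R\to\R$ of $(x,\Theta)\mapsto A(x)\Theta/\|A(x)\Theta\|$, one checks directly that $|\tilde g(x,\theta)-\sigma\theta-\tilde g(x,0)|\le2\pi$ for all $(x,\theta)$. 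Now $e^{i\tilde g(x,0)}$ is the direction of the first column of $A(x)$, which equals $\Psi(x,\gamma_C(x))$ for a suitable definable $\SSS^1$-valued map $\Psi$; by the non-oscillation property that Lemma~\ref{lem:1} derives from regular separation, this $\SSS^1$-valued curve cannot wind infinitely as $x\to0$, so $\tilde g(x,0)$ stays bounded. Therefore, if $\theta_C$ is a continuous measure of the angle of $\eps_C(x)$, then $\theta_{C'}^\sharp(x):=\tilde g(x,\theta_C(x))$ is a continuous measure of the angle of $A(x)\Theta_C(x)$ satisfying $\theta_{C'}^\sharp(x)=\sigma\,\theta_C(x)+O(1)$; and by Step~1 any continuous measure $\theta_{C'}$ of the angle of $\eps_{C'}(x)$ differs from $\theta_{C'}^\sharp$ by a bounded amount. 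Since $\gamma_C,\delta_C$ are interlaced, $\theta_C(x)\to\pm\infty$, hence $\theta_{C'}(x)\to\pm\infty$, i.e. $\gamma_{C'}$ and $\delta_{C'}$ are interlaced.

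The main obstacle is that $\gamma_C$ is not a definable germ, so none of the quantitative controls used above --- the rate at which $\mu(x,r)\to0$, the eventual constancy of the sign of $\det D_y\phi$, and the absence of infinite winding of the first column of $A(x)$ --- is available a priori; each must be encoded as a genuinely definable function of $(x,y_1,y_2)$ and then transferred along $\gamma_C$ by means of the regular separation property (Lemma~\ref{lem:1} and Proposition~\ref{propo:reg-sep-cdv}) together with the flatness of $\eps_C$. The subtlest point is the uniformity in $\Theta$ of the linearization estimate of Step~1, which is exactly why $\mu$ must be defined as a supremum over $\SSS^1$.
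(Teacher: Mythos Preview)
Your argument is correct, but it takes a very different route from the paper's. The paper's proof is a three-line application of the main dichotomy (Theorem~\ref{th:main}): since $\gamma_C,\delta_C$ are interlaced, the function $t\mapsto \gamma_{C,2}(t)-\delta_{C,2}(t)$ vanishes along a sequence $(t_{2n})\to0$ and is nonzero along another sequence $(t_{2n+1})\to0$; rewriting this function as $D(t,\gamma_{C'}(t),\delta_{C'}(t))$ for the definable map $D(x,z,z')=y_2(Z^{-1}(x,z))-y_2(Z^{-1}(x,z'))$ exhibits an element of $\mathcal F(x,\gamma_{C'},\delta_{C'})$ with no ultimate sign, so this ring is not a Hardy field, and Theorem~\ref{th:main} (together with Propositions~\ref{propo:reg-sep-cdv} and~\ref{prop:flat}) forces interlacement in $C'$. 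Your proof, by contrast, is a direct angular computation: you control how the transition map $\phi$ distorts the direction of $\eps_C$, first by linearizing (Step~1) and then by showing the linear part $A(x)$ contributes only bounded rotation (Step~2), with regular separation supplying the uniformity each step needs. What your approach buys is independence from Theorem~\ref{th:main}: you never invoke the Hardy-field alternative, so the claim becomes a self-contained geometric fact about chart changes sharing the first coordinate. What the paper's approach buys is brevity and a clean logical structure---once the dichotomy is proved, chart-independence of interlacement falls out almost for free, with no estimates at all. Your final paragraph correctly identifies where the work lies in your route; it would read better as a remark preceding the proof rather than a closing caveat.
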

\begin{proof} 
	Set $Y=(x,y_1,y_2)$ and $Z=(x,z_1,z_2)$.
	Since $\gamma_C$ and $\delta_C$ are interlaced, there exists an infinite decreasing sequence $(t_n)$ {tending to} $0$ such that $\gamma_{C,2}(t_{2n})-\delta_{C,2}(t_{2n})=0$ (every time $\theta\equiv 0\,\mathrm{mod }[\pi]$ in Definition \ref{def:interlaced-separated}) while $\gamma_{C,2}(t_{2n+1})-\delta_{C,2}(t_{2n+1})\neq 0$  (e.g. every time $\theta\equiv \pi/2 \,\mathrm{mod }[\pi]$). Set
	\[D(x,z_1,z_2,z'_1,z'_2)=y_2(Z^{-1}(x,z_1,z_2))-y_2(Z^{-1}(x,z'_1,z'_2)).\] 
	Then $\phi:t\mapsto D(t,\gamma_{C'}(t),\delta_{C'}(t))$ vanishes over
	$(t_{2n})_{n\in\mathbb N}$, but vanishes at no point of $(t_{2n+1})_{n\in\mathbb N}$. Since $\phi$ is the composition of a definable function with \\ $t\mapsto(t,\gamma_{C'}(t),\delta_{C'}(t))$, this implies that the ring $\mathcal F(x,\gamma_{C'}(x),\delta_{C'}(x))$ is not a Hardy field. So {by} Theorem \ref{th:main} and Propositions \ref{propo:reg-sep-cdv} and \ref{prop:flat}, $\gamma_{C'},\delta_{C'}$ are interlaced.
\end{proof}

\begin{claim}\label{claim:intxy} Suppose that $C'=(V,(y_1,x,y_2))$ is adapted to $\Gamma$ and $\Delta$.  If $\gamma_C$ and $\delta_C$ are interlaced, then so are $\gamma_{C'}$ and $\delta_{C'}$.
\end{claim}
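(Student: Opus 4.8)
The plan is to reduce this case --- where the roles of the first coordinate $x$ and the transverse coordinate $y_1$ are swapped --- to the already-established Claim~\ref{claim:intx} together with Theorem~\ref{th:main}, exactly as in Claim~\ref{claim:intx}: I will exhibit a single definable function of $(x,\gamma_{C'},\delta_{C'})$ that vanishes infinitely often without vanishing identically, so that $\mathcal{F}(x,\gamma_{C'},\delta_{C'})$ fails to be a Hardy field; then Theorem~\ref{th:main}, applied via Propositions~\ref{propo:reg-sep-cdv} and~\ref{prop:flat} (which guarantee that $\gamma_{C'}$ has regular separation and that $\gamma_{C'},\delta_{C'}$ have flat contact), forces interlacement of $\gamma_{C'}$ and $\delta_{C'}$. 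The witness function is the natural analogue of $D$ from Claim~\ref{claim:intx}: writing $Y=(x,y_1,y_2)$ and $Y'=(y_1,x,y_2)$ for the two coordinate systems on $V$, set
\[
D(u,v_1,v_2,v'_1,v'_2):=y_2\bigl(Y'^{-1}(u,v_1,v_2)\bigr)-y_2\bigl(Y'^{-1}(u,v'_1,v'_2)\bigr),
\]
so that $\phi(x):=D(x,\gamma_{C'}(x),\delta_{C'}(x))=\gamma_{C,2}(x_{\mathrm{pt}})-\delta_{C,2}(x_{\mathrm{pt}})$ evaluated at the common point of the two trajectories above parameter~$x$ in the chart $C'$. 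The point is that the $y_2$-coordinate of a point on $\Gamma$ (resp. $\Delta$) is intrinsic to the point, independent of which of the two charts is used to name it.

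The key steps, in order. First, observe that since $C'=(V,(y_1,x,y_2))$ is adapted to $\Gamma$ and $\Delta$, the function $dy_1(\xi)$ is positive along (representatives of) both trajectories, hence $x\mapsto\gamma_{C,1}(x)$ is strictly monotone for small $x$, so the reparametrization $x\leftrightarrow y_1$ along $\Gamma$ is a well-defined definable homeomorphism between two intervals $(0,a)$, $(0,a')$ with $0\leftrightarrow 0$; likewise along $\Delta$. Second, transport the interlacement hypothesis: $\gamma_C$ and $\delta_C$ interlaced means $\varepsilon^C:=\delta_C-\gamma_C$ spirals, so in particular $\varepsilon^C_2=\gamma_{C,2}-\delta_{C,2}$ has infinitely many zeros accumulating at~$0$ and also infinitely many parameters where it is nonzero (take the values where the angle $\theta$ is $0\bmod\pi$, resp. $\pi/2\bmod\pi$). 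Third, pull these two infinite sequences back through the coordinate change to sequences of $y_1$-parameters --- here the monotonicity from the first step is what makes ``accumulating at $0$'' be preserved, and one must be slightly careful because $\Gamma$ and $\Delta$ sit over different $y_1$-intervals; this is where the flat contact between $\gamma_{C'}$ and $\delta_{C'}$ (Proposition~\ref{prop:flat}) enters, to ensure the two $y_1$-parametrizations are comparable near $0$ so that $\phi$ is defined on a common interval and inherits both infinitely many zeros and infinitely many nonzeros. Fourth, conclude as in Claim~\ref{claim:intx}: $\phi\in\mathcal{F}(x,\gamma_{C'},\delta_{C'})$ vanishes infinitely often but not identically, so that ring is not a Hardy field; Theorem~\ref{th:main} then yields that $\gamma_{C'}$ and $\delta_{C'}$ are interlaced.

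The main obstacle I anticipate is \emph{not} the algebra but the bookkeeping in the third step: when the roles of $x$ and $y_1$ are exchanged, the vanishing locus of $\varepsilon^C_2$ in the $x$-parameter must be matched with the vanishing locus of the corresponding transverse component in the $y_1$-parameter along \emph{each} trajectory separately, and a priori $\Gamma$ and $\Delta$ are parametrized by $y_1$ over different intervals, so one needs to argue that, near $0$, the point of $\Delta$ with a given $y_1$-value and the point of $\Gamma$ with the same $y_1$-value are genuinely ``at the same height'' in a way that makes $\phi$ capture $\varepsilon^C_2$. Regular separation of $\Gamma$ (through Proposition~\ref{propo:reg-sep-cdv}) and flat contact (through Proposition~\ref{prop:flat}) are precisely the tools that control this, and a brief invocation of them --- rather than a recomputation --- should suffice; I would also note that once both Claim~\ref{claim:intx} and Claim~\ref{claim:intxy} are in hand, Proposition~\ref{prop:interlacement} follows because any two definable charts adapted to $\Gamma$ and $\Delta$ can be connected by a finite sequence of coordinate permutations of $\{x,y_1,y_2\}$ (to put the ``time'' coordinate in the first slot) and relabelings of the two transverse coordinates, to which Claims~\ref{claim:intx} and~\ref{claim:intxy} apply.
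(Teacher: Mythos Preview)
Your overall strategy---produce a definable germ in $\mathcal{F}(x,\gamma_{C'},\delta_{C'})$ that vanishes infinitely often without vanishing identically, then invoke Theorem~\ref{th:main}---is exactly the paper's. The gap is in your choice of witness. Your function $D$ collapses to $\phi(t)=\gamma_{C',2}(t)-\delta_{C',2}(t)$, the difference of $y_2$-coordinates of the points on $\Gamma$ and $\Delta$ sharing the $y_1$-level $t$. A zero of $\varepsilon^C_2$ at an $x$-parameter $x_0$, however, compares the $y_2$-coordinates of points sharing the $x$-level $x_0$; since in general $\gamma_{C,1}(x_0)\neq\delta_{C,1}(x_0)$, those two points do not share a $y_1$-level and give no information about $\phi$. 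Your equation $\phi(x)=\gamma_{C,2}(x_{\mathrm{pt}})-\delta_{C,2}(x_{\mathrm{pt}})$ is therefore not correct (there is no common $x_{\mathrm{pt}}$), and the ``bookkeeping'' you flag in step three is a genuine obstruction rather than something flat contact and regular separation dispatch by invocation.

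The paper sidesteps this entirely by switching the witness to the \emph{first} component, i.e.\ $\gamma_{C',1}-\delta_{C',1}$, and correspondingly using the zeros of $\varepsilon^C_1$ rather than of $\varepsilon^C_2$. The point is that a zero of $\varepsilon^C_1$ at $x_n$ means $\gamma_{C,1}(x_n)=\delta_{C,1}(x_n)=:t_n$, so both trajectories reach the same $y_1$-level $t_n$ at the same $x$-level $x_n$; hence $\gamma_{C',1}(t_n)=\delta_{C',1}(t_n)=x_n$ automatically. For the nonzeros one takes $x_{2n+1}$ with $\gamma_{C,1}(x_{2n+1})\neq\delta_{C,1}(x_{2n+1})$ and sets $t_{2n+1}:=\gamma_{C,1}(x_{2n+1})$; then $\gamma_{C',1}(t_{2n+1})=x_{2n+1}$, while $\delta_{C',1}(t_{2n+1})=x_{2n+1}$ would force $\delta_{C,1}(x_{2n+1})=t_{2n+1}=\gamma_{C,1}(x_{2n+1})$, a contradiction. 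The transport is thus purely combinatorial---no appeal to flat contact or regular separation is needed beyond what is already required to apply Theorem~\ref{th:main}. Replacing $y_2$ by $x$ in your $D$ (equivalently, taking $\gamma_{C',1}-\delta_{C',1}$ directly) fixes your argument with no further changes.
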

\begin{proof}
	Since $\gamma_C$ and $\delta_C$ are interlaced, there is an infinite decreasing sequence $(x_n)_{n\in\mathbb N}$ tending to $0$ such that
	\[ \gamma_{C,1}(x_{2n})=\delta_{C,1}(x_{2n}) \text{ and }
	\gamma_{C,1}(x_{2n+1})\neq\delta_{C,1}(x_{2n+1}).\]
	Let $t_n:=\gamma_{C,1}(x_{n})$, so $x_n=\gamma_{C',1}(t_n)$.
	Then $x_{2n}=\gamma_{C',1}(t_{2n})=\delta_{C',1}(t_{2n})$,
	while $x_{2n+1}=\gamma_{C',1}(t_{2n+1})\neq\delta_{C',1}(t_{2n+1})$:
	otherwise, $\delta_{C,1}(x_{2n+1})=t_{2n+1}=\gamma_{C,1}(x_{2n+1})$,
	which is false.
	Then $\gamma_{C',1}-\delta_{C',1}$ vanishes over the sequence $(t_{2n})_{n\in\mathbb N}$ that tends to $0$,
	while it vanishes at no point of the sequence $(t_{2n+1})_{n\in\mathbb N}$,
	that tends to $0$ too.
	Then $\mathcal F(x,\gamma_{C'}(x),\delta_{C'}(x))$ is not a Hardy field,
	so from Theorem \ref{th:main}, $\gamma_{C'}$ and $\delta_{C'}$ are interlaced.
\end{proof}

\begin{proof}[Proof of Proposition \ref{prop:interlacement}]
	Let $C_0=(V_0,(x,y_1,y_2))$ be a chart adapted to $\Gamma$ such that $\gamma_{C_0}$ and $\delta_{C_0}$ are interlaced, and let $C_1=(V_1,(t,z_1,z_2))$ be another chart adapted to $\Gamma$. There is a chart $C_2=(V_2,(x,y'_1,y'_2))$ that is adapted to $\Gamma$ and $\Delta$ and such that $dy'_1(\xi)>0$, $dy'_2(\xi)>0$: e.g., set  $y'_i= -y_i $ if $ dy_i(\xi)<0$, or $y'_i= y_i+ x$ if $ dy_i(\xi)\equiv 0$. Similarly, there is a chart $C_3=(V_3,(t,z'_1,z'_2))$, adapted to $\Gamma$ and such that  $dz'_1(\xi)>0$, $dz'_2(\xi)>0$. We may assume that $z_1',z_2'$ are chosen so that $C_4=(V_4,(x,z_1',z_2'))$ is a chart, adapted to $\Ga$ (for instance, we change $z_1'$ to $z_1'+\alpha x+\beta y_1'+\gamma y_2'$ with generic $\alpha,\beta,\gamma>0$ and use the fact that $(x,y_1',y_2')$ is a chart). According to Proposition \ref{prop:flat}, up to considering smaller domains, all the charts above are also assumed to be adapted to $\Delta$.  
	
	Set $W=V_0\cap V_1 \cap V_2 \cap V_3 \cap V_4$, and restrict the domains of $C_0, C_1, C_2, C_3, C_4$ to $W$ (keeping the same names).
	From Claim \ref{claim:intx}, we get that $\gamma_{C_2}$ and $\delta_{C_2}$ are interlaced. 
	From Claim \ref{claim:intx}, $\gamma_{C_4}$ and $\delta_{C_4}$ are interlaced.
	From Claim \ref{claim:intxy}, $C_5=(W,(z'_1,x,z'_2))$ is a chart adapted to $\Ga$ and $\Delta$, and  $\gamma_{C_5}$ and $\delta_{C_5}$ are interlaced. 
	From Claim \ref{claim:intx},  $C_6=(W,(z'_1,t,z'_2))$ is also a chart adapted to  $\Ga$ and $\Delta$, and  $\gamma_{C_6}$ and $\delta_{C_6}$ are interlaced.
	From Claim \ref{claim:intxy},  $\gamma_{C_3}$ and $\delta_{C_3}$ are interlaced for  $C_3=(W,(t,z'_1,z'_2))$.
	And finally, from Claim \ref{claim:intx}, $\gamma_{C_1}$ and $\delta_{C_1}$ are interlaced, which was to be proven.
\end{proof}

\subsection{Dichotomy Interlacement versus Separation by projection}

%Projection s\~{A}{\copyright}parante
\begin{definition}\label{def:enlace-separe-traj}
	Let $\Gamma$, $\Delta$ be trajectories {of $\xi$}. 
	\begin{enumerate}
		\item We say that $\Gamma$ and $\Delta$ are {\em interlaced} if there is a common adapted chart $C$ such that $\gamma_C$ and $\delta_C$ are {interlaced}.
		\item We say that $F$ is a \emph{projection adapted to $(\Gamma,\Delta)$}  if $F:U\subset\mathbb R^3\to\mathbb R^2$ is a definable submersion on an open domain $U$ with $\lim_{p\to 0}F(p)=0$, and $\Gamma\cap U$ (resp. $\Delta\cap U$) is a representative of the germ at $0$ of $\Gamma$ (resp. $\Delta$).
	\end{enumerate}
\end{definition}

By Proposition \ref{prop:interlacement}, the definition of interlacement does not depend on the chart $C$ if one of the trajectories has the property of regular separation and if they have flat contact. 

{In the following theorem, we provide a stronger version of the alternative  ``{interlaced} versus {separated}'' proven in  \cite[Th\'eor\`eme I]{Can-Mou-San2} for non-oscillating trajectories of analytic vector fields which  share the same iterated tangents  (see Section \ref{sec:integral-pencils} for details). More precisely, our version replaces an existential quantifier by a universal one, and is valid for vector fields definable in a polynomially bounded o-minimal expansion of $\mathbb R$. }

%The version of the heuristic \eqref{equH} proven in \cite{Can-Mou-San2} is the following: if $\Gamma$ and $\Delta$ are nonoscillating trajectories of analytic vector fields and share the same iterated tangents, then either $\Gamma$ and $\Delta$ are {interlaced}  (\emph{``enlac\'ees''}) or else they are \emph{separated} (``s\'epar\'ees"):  there exists a subanalytic projection $F$ adapted to $(\Gamma,\Delta)$ such that the germ at $0$ of $F(\Gamma)\cap F(\Delta)$ has a connected representative {(mostly $\emptyset$, except for the rare cases where $F(\Gamma)$ and $F(\Delta)$ coincide near $0$)}. From Theorem \ref{th:main}, we can get the following similar but stronger alternative. Our version replaces an existential quantifier by a universal one, and is valid for definable vector fields in a polynomially bounded o-minimal expansion of $\mathbb R$.

\begin{theorem}\label{th:defvf}
	Let $\Gamma, \Delta$ be trajectories of $\xi$ such that $\Gamma$ has regular separation property and $\Gamma, \Delta$ have flat contact. Then, either $\Gamma$ and $\Delta$ are interlaced, or for {any} definable projection $F$ adapted to $(\Gamma,\Delta)$, the germ at $0$ of $F(\Gamma)\cap F(\Delta)$ has a connected representative. {The two properties of this alternative are mutually exclusive.}
\end{theorem}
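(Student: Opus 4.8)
The plan is to fix a definable projection $F\colon U\to\R^2$ adapted to $(\Gamma,\Delta)$ and to establish the dichotomy for this $F$. The easy half is mutual exclusivity: if $\Gamma,\Delta$ are interlaced, we exhibit \emph{one} adapted projection whose associated intersection is not connected. By Definition~\ref{def:enlace-separe-traj} interlacement provides a common adapted chart $C=(V,(x,y_1,y_2))$ in which $\eps:=\delta_C-\gamma_C$ spirals around $0$. The map $F^{\star}\colon p\mapsto(y_1(p),x(p))$ is then the composition of the diffeomorphism $X=(x,y_1,y_2)$ with a linear surjection $\R^3\to\R^2$, hence a definable submersion tending to $0$ at $0$, and it is adapted to $(\Gamma,\Delta)$. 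A direct computation gives $F^{\star}(\Gamma)\cap F^{\star}(\Delta)=\{(\gamma_{C,1}(t),t)\;:\;t>0,\ \eps_1(t)=0\}$, which is homeomorphic, through the second coordinate, to $\{t>0:\eps_1(t)=0\}$; since $\eps_1=\|\eps\|\cos\theta$ changes sign infinitely often as $t\to 0$, this set has infinitely many components accumulating at $0$, so $F^{\star}(\Gamma)\cap F^{\star}(\Delta)$ is not connected near $0$. As one of the two alternatives of the theorem trivially holds, this shows they cannot hold simultaneously.

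For the substantial half we assume $\Gamma,\Delta$ are not interlaced (a chart-independent condition, by Proposition~\ref{prop:interlacement}) and prove that $F(\Gamma)\cap F(\Delta)$ has a connected representative. By Lemma~\ref{lem:1} (applied, in a chart adapted to $\Gamma$, to the definable set $\{p\in U:dF(\xi)(p)=0\}$), either $\Gamma$ meets $\{dF(\xi)=0\}$ only at isolated points near $0$, or $\Gamma$ is contained in it near $0$. In the second case $F$ is constant along $\Gamma$ near $0$, so $F(\Gamma)=\{0\}$ and $F(\Gamma)\cap F(\Delta)\subset\{0\}$ is connected; so henceforth we assume $dF(\xi)\neq 0$ along $\Gamma$ near $0$ and, permuting the components of $F$, that $dF_1(\xi)\neq 0$ there.

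The technical core is then to realize $F$ as two coordinates of a chart adapted to $\Gamma$. Because $\Gamma$ has the regular separation property, every definable germ restricted to $\Gamma$ has a limit in $\overline{\R}$ (apply the defining property to $f-r$, $r\in\Q$). Consequently the unit tangent to $\Gamma$ — which is proportional to $\xi/\|\xi\|$ along $\Gamma$ — has a limit direction $u_\infty\in\SSS^2$, and the fibre line $\ker dF_{c(x)}$ has a limit line $L_0$, so that the cotangent plane $\mathrm{span}(dF_1,dF_2)_{c(x)}$ tends to the fixed plane $\Pi_0=\{\omega:\omega|_{L_0}=0\}$. Choose a linear form $a$ on $\R^3$ with $\langle a,u_\infty\rangle>0$ and $a\notin\Pi_0$ (possible, the first condition cutting out an open half-space which is not contained in the plane $\Pi_0$). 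Then for small $x$ one has $\langle a,\xi(c(x))\rangle>0$ and $a\notin\mathrm{span}(dF_1,dF_2)_{c(x)}$, so, setting $w:=\langle a,\cdot\rangle$, the map $(w,F_1,F_2)$ has $dw(\xi)>0$ on $\Gamma$ near $0$ and is a local diffeomorphism along $\Gamma$; on a suitable definable neighbourhood $V$ of the germ of $\Gamma$ it is a diffeomorphism onto its image, and $C:=(V,(x,z_1,z_2))$ with $(x,z_1,z_2):=(w,F_1,F_2)$ is a definable chart adapted to $\Gamma$ with $F=(z_1,z_2)$ on $V$. Shrinking $V$ via Proposition~\ref{prop:flat} makes $C$ adapted to $\Delta$ as well, with $\gamma_C,\delta_C$ in flat contact; $\gamma_C$ then has regular separation (Proposition~\ref{propo:reg-sep-cdv}), and, $\Gamma,\Delta$ being not interlaced, Theorem~\ref{th:main} shows $\mathcal H:=\mathcal F(x,\gamma_C,\delta_C)$ is a Hardy field. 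Finally, a linear change in the $(z_1,z_2)$-plane — replacing $F$ by $L\circ F$, which changes nothing in the statement — lets us assume $\gamma_C'$ tends to a nonzero direction along the first axis, so that $\gamma_{C,1}$ is strictly monotone with $\gamma_{C,1}>0$ and $\gamma_{C,1}(x)\to0^+$ near $0$.

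It remains to analyse the plane curves $F(\Gamma)=\gamma_C((0,a))$ and $F(\Delta)=\delta_C((0,b))$. The germ $\delta_{C,1}\in\mathcal H$ is ultimately monotone; if it is ultimately $\le0$ (in particular if $\delta_{C,1}\equiv0$), then $F(\Gamma)$ and $F(\Delta)$ lie in the disjoint half-planes $\{z_1>0\}$ and $\{z_1\le0\}$ near $0$, and the intersection germ is empty. Otherwise $\delta_{C,1}>0$ is strictly decreasing to $0^+$, its compositional inverse is defined, and $\phi:=\delta_{C,1}^{\langle-1\rangle}\circ\gamma_{C,1}$ is a germ at $0^+$ with $\phi\to0^+$. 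A point $q$ near $0$ lies in $F(\Gamma)\cap F(\Delta)$ iff $q=\gamma_C(x)$ with $\gamma_C(x)=\delta_C(x')$ for some $x,x'>0$; equality of first coordinates forces $x'=\phi(x)$, so the condition is $D(x):=\gamma_{C,2}(x)-\delta_{C,2}(\phi(x))=0$, and since $\gamma_C$ is injective near $0$, $F(\Gamma)\cap F(\Delta)$ is, near $0$, homeomorphic to $\{x>0:D(x)=0\}$. Embedding $\mathcal H$ into a maximal Hardy field $\mathcal M$ and using that maximal Hardy fields are closed under composition and under compositional inversion of germs tending monotonically to $0^+$ (classical; see \cite{Ros,Bos}), one gets $\delta_{C,1}^{\langle-1\rangle}\in\mathcal M$, hence $\phi\in\mathcal M$, hence $\delta_{C,2}\circ\phi\in\mathcal M$, hence $D\in\mathcal M$. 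Thus $D$ is ultimately $\equiv0$ or ultimately nonvanishing, so $\{x>0:D(x)=0\}$ is a half-open interval $(0,\epsilon)$ or is empty near $0$, and accordingly $F(\Gamma)\cap F(\Delta)$ has a connected representative — the arc $\gamma_C((0,\epsilon))$, or $\emptyset$ — which completes the plan. The hard part is the chart construction of the third paragraph: extracting from the regular separation of $\Gamma$ enough control of the limit directions of $\Gamma$ and of the fibres of $F$ to exhibit the arbitrary projection $F$ inside a chart adapted to $\Gamma$; this is exactly where the universal quantifier ``for any $F$'' is paid for. A secondary point, needed in the last paragraph, is to justify that the reparametrized germ $\delta_{C,2}\circ\delta_{C,1}^{\langle-1\rangle}\circ\gamma_{C,1}$ lies in a Hardy field together with $\gamma_{C,2}$.
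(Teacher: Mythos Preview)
Your argument has a genuine gap in the last paragraph: the claim that maximal Hardy fields are closed under composition (and hence that $\delta_{C,2}\circ\delta_{C,1}^{\langle-1\rangle}\circ\gamma_{C,1}$ lies in a Hardy field together with $\gamma_{C,2}$) is \emph{not} a classical fact, and certainly not established in \cite{Ros} or \cite{Bos}. Closure of Hardy fields under compositional inverse can be arranged, but closure under composition $g\circ f$ of two Hardy field germs is a much deeper question; it is not available as a black box you can cite. Without this, your function $D(x)=\gamma_{C,2}(x)-\delta_{C,2}(\phi(x))$ is not known to have an ultimate sign, and the connectedness conclusion does not follow.

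The paper avoids this detour entirely by a smarter choice of chart: instead of taking an auxiliary linear form $w$ as the parametrizing coordinate, it takes $F_1$ itself as the first coordinate of the chart (after checking $dF_1(\xi)\neq 0$ along $\Gamma$ and completing with one of the coordinates $x_k$ of an existing adapted chart so that $(F_1,F_2,x_k)$ is a local diffeomorphism). In that chart, $F(\Gamma)$ and $F(\Delta)$ are both graphs over the \emph{same} parameter $t=F_1$, and the intersection is governed by the single germ $\gamma_{C',1}(t)-\delta_{C',1}(t)$, which lies directly in the Hardy field $\mathcal F(t,\gamma_{C'},\delta_{C'})$ furnished by Theorem~\ref{th:main}; no composition or inversion is needed. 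Your own construction already gives everything required for this fix: you verified $dF_1(\xi)\neq 0$ along $\Gamma$ and that $(w,F_1,F_2)$ is a local diffeomorphism there, so the reordering $(F_1,F_2,w)$ is equally an adapted chart, and applying Theorem~\ref{th:main} in \emph{that} chart yields the conclusion immediately. In short, the gap is not in the overall strategy but in the bookkeeping of which coordinate parametrizes the chart; once $F_1$ is placed first, the appeal to composition closure of Hardy fields disappears.
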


\begin{proof}%[Proof of Theorem \ref{cor:dichotomy}]
	We suppose that $\Gamma$ and $\Delta$ are not interlaced. Choose a chart $C=(V,X=(x_1,x_2,x_3))$ adapted to both $\Gamma$ and $\Delta$ and fix $F=(F_1,F_2):W\to \mathbb R^2$ a projection adapted to $(\Gamma, \Delta)$. We shall conclude that the germ at $0$ of $F(\Gamma)\cap F(\Delta)$ has a connected representative.
	
	First, suppose that $dF_1\wedge dF_2(X^{-1}(x,\gamma_C(x)))$ vanishes identically for small $x$.
	Since $F$ is a submersion, we get that $F_1$ and $F_2$ are both constant over $\Gamma$, and since $X^{-1}(x,\gamma_C(x))$ {tends to} $0$ when $x$ does,
	$F(x,\gamma_C(x) )=0$ for small $x$. Now $x\mapsto (F_1^2+F_2^2)(x,\delta_C(x))$ belongs to the Hardy field $\mathcal F(x,\gamma_C,\delta_C)$, then either vanishes identically or else does not vanish for small $x$. Depending whether it vanishes or not, $\{0\}$ or $\emptyset$ is a connected representative of the germ at $0$ of
	$F(\Gamma)\cap F(\Delta)$.
	
	Suppose now that $dF_1\wedge dF_2(X^{-1}(x,\gamma_C(x)))$ does not identically vanish for small $x$. From regular separation of $\Gamma$, ultimately, it never vanishes. Up to {replacing} $F_1$ by $-F_1$, we suppose that $dF_1(\xi)>0$ over $\Gamma$.
	For $i=1,2,3$, define
	\[ V_i:=\{p\in W\cap V;\; dF_1(p)\wedge dF_2(p)\wedge dx_i(p) \neq 0\}.\]  For a given $p$, since $(x_1,x_2,x_3)$ is a diffeomorphism, $(dx_1(p),dx_2(p),dx_3(p))$ is a basis of the dual of $T_p\mathbb{R}^3$. So at least one form among them, say $dx_k(p)$, is independent from $(dF_1(p),dF_2(p))$. Since $F$ is a submersion, $(dF_1(p),dF_2(p))$ has rank $2$ so $dF_1(p)\wedge dF_2(p)\wedge dx_k(p)\neq 0$. This shows that $V_1\cup V_2\cup V_3 =  W\cap V$.\\
	These sets $V_i$, $i=1,2,3$, are open and definable. By regular separation of $\Gamma$, there is an index $k$ such that $(x,\gamma_C(x))$ belongs to $V_k$ for small $x$. So $(F_1,F_2,x_k)$ is a local diffeomorphism on $V_k$, and $V_k$ contains a representative of the germ of $\Gamma$ at $0$. Up to shrinking $V_k$, we assume that $(F_1,F_2,x_k)$ is injective and $\Gamma$ has a connected representative contained in $V_k$.
	%(choix d\~{A}{\copyright}finissable pour rendre injectif?)
	So $C' = (V_k,(F_1,F_2,x_k))$ is a chart adapted to $\Gamma$.
	From Proposition \ref{prop:flat}, it is also adapted to $\Delta$ up to shrinking again $V_k$. From Proposition \ref{prop:interlacement} and Theorem \ref{th:main}, $F_2(t,\gamma_{C'}(t))-F_2(t,\delta_{C'}(t))$ belongs to a Hardy field. Then either it vanishes identically, and the germ at $0$ of $F(\Gamma)\cap F(\Delta)$ coincides with the germ at $0$ of $F(\Gamma)$, which has a connected representative, or ultimately it does not vanish, and $\emptyset$ is a connected representative of  $F(\Gamma)\cap F(\Delta)$.
	
	Finally, to prove that {the properties} are mutually exclusive, {assume that $\Ga,\Delta$ are interlaced and take $C=(V,(x,y_1,y_2))$ a chart adapted to $\Ga$ and $\Delta$ such that $\gamma_C,\delta_C$ are interlaced.} Then $\{(x,y_1)\,:\, x>0,\, y_1=\gamma_{C,1}(x)=\delta_{C,1}(x)\}$ is a discrete infinite sequence in $\R^2$ that approaches $(0,0)$. The map $F:V\rightarrow \R^2,\ F(x,y_1,y_2)=(x,y_1)$ is a definable projection adapted to $\Ga,\Delta$ such that the germ at 0 of $F(\Gamma)\cap F(\Delta)$ has no connected representative.\end{proof}

% ----------------------------------- Section 4

\section{Integral pencils of analytic vector fields.}\label{sec:integral-pencils}

Let $\xi$ be a real analytic vector field in a neighborhood of $0\in\R^3$ such that $\xi(0)=0$. Such a $\xi$ (up to restricting its domain) is definable in the o-minimal and polynomially bounded structure $\R_{an}$ of globally subanalytic sets \cite{vdD}.  Our purpose is to apply the results of the previous section to trajectories of $\xi$ asymptotic to a formal curve, and to clarify their  {links with} F. Cano et al.\,{works}  \cite{Can-Mou-San1,Can-Mou-San2}. 
{We first recall some notions of these papers.}

{A \emph{(real irreducible) formal curve} $\mathcal{C}$ at $(\R^3,0)$ is an equivalence class  of formal parameterizations $\mathcal{C}(t)\in(t\R[[t]])^3\setminus\{(0,0,0)\}$ (up to 
	{formal} reparameterization). One can bi-univocally associate to a formal curve its sequence $IT(\mathcal{C})=\{p_n\}_{n\ge 0}$ of {\em iterated tangents} (also called sequence of {\em infinitely near points}, see for instance \cite{Wal,Cas}).}
{The sequence $IT(\mathcal{C})$ is obtained recursively as follows: we set $p_0:=0$, $\mathcal{C}_0:=\mathcal{C}$, and for $j\ge 0$, if $\pi_j$ is the blow-up centered at $p_j$, $\mathcal C_{j+1}$ is the strict transform
	of $\mathcal{C}_j$ by $\pi_j$ and $p_{j+1}:=\mathcal C_{j+1}(0)$.}
{Replacing projective blow-ups by spherical ones (see \cite{dumortier:singul-vect-field-plane} or \cite{martin-rolin-sanz:local-monom-gener-funct}) in this construction, $\mathcal C$ provides two sequences $\mathcal{C}^+=\{p_n^+\}$, $\mathcal{C}^-=\{p_n^-\}$ of {\em oriented iterated tangents} for $\mathcal{C}$, each one {corresponding to} a (formal) {\em half-branch} of $\mathcal{C}$ {(determined by the sign of $t$)}.}
Given a half branch $\mathcal{C}^\epsilon$, $\epsilon \in \{+,-\}$, and a subanalytic set $A$ with $0\in\overline{A}$, we say that $\mathcal{C}^\epsilon$ {\em is contained in $A$} if 
{for all $j\ge 0$, $p^\epsilon_{j+1}\in A_{j+1}:=\overline{\rho_j^{-1}(A_{j}\setminus\{p_{j}^{\varepsilon}\})}$ where $\rho_j$ is the spherical blow-up at $p_{j}^{\varepsilon}$ and $A_0=A$.}
If $\mathcal{C}(t)$ is convergent,
{the set $C_{\delta}=\{\mathcal C(t);\; t\in (0,\delta)\}$ materializes one half branch $\mathcal C^{\epsilon}$, and $\mathcal C^{\epsilon}$ is contained in $A$}
{means that $C_{\delta}\subset A$ for small $\delta>0$}.

Let $\Ga$ be a trajectory {at $0$} of the vector field $\xi$. Following \cite[p. 287]{Can-Mou-San1}, we say that $\Ga$ has the {\em property of iterated (oriented) tangents} if we can associate a sequence $IT(\Ga)=\{q_n\}_{n\ge 0}$ to $\Ga$ by the {following process: set $q_0:=0$, $\Ga_0:=\Ga$ and for $j\ge 0$, the point $q_{j+1}$ is the unique accumulation point  
	of $\Ga_{j+1}:=\rho_{j}^{-1}(\Ga_{j})$ in $\rho^{-1}_{j}(q_{j})$, where $\rho_j$ is the spherical blow-up at $q_{j}$.}
{F}or instance, if $\Ga$ is \emph{non-oscillating} with respect to semi-analytic sets {(i.e. its intersection with any semi-analytic set has finitely many connected components)}, {\cite[Proposition 1.2]{Can-Mou-San1} shows that $\Ga$ has iterated tangents.
	
	\begin{definition}
		We say that a trajectory $\Ga$ is {\em asymptotic to a half-branch} $\mathcal{C}^\epsilon$ of a formal curve at $(\R^3,0)$ if $\Ga$ has the property of iterated tangents and $IT(\Ga)=\mathcal{C}^\epsilon$. The set $\mathcal{P}_{\mathcal{C}^\epsilon}$ composed of all trajectories of $\xi$ that are asymptotic to a given  half-branch $\mathcal{C}^\epsilon$ is called the \emph{integral pencil {of $\xi$} with (half)-axis} $\mathcal{C}^\epsilon$.
	\end{definition}
	A trajectory can be asymptotic to at most one half-branch $\mathcal{C}^\epsilon$ and thus different formal half-branches determine disjoint integral pencils.
	On the other hand, given  a formal curve $\mathcal{C}$, if $\mathcal{P}_{\mathcal{C}^\epsilon}\ne\emptyset$ for one of its half-branches $\mathcal{C}^\epsilon$, then $\mathcal{C}$ is {\em invariant} for the vector field $\xi$, i.e., being $\mathcal{C}(t)$ a parameterization, one has $\xi|_{\mathcal{C}(t)}=h(t)\frac{d\mathcal{C}(t)}{dt}$ for some $h(t)\in\R[[t]]$ ({the proof given in \cite[Prop. 2.1.1]{Can-Mou-San1} in the convergent case applies {to} formal curves}).
	{If} $\mathcal{C}$ is not contained in the singular locus of $\xi$, we say that the axis is {\em non-degenerated}. 
	
	\begin{lemma}\label{lm:flat-contact-pencil}
		Let $\mc P_{\mathcal{C}^+}$ be a pencil with a formal non-degenerated axis and  let $\Ga,\Delta$ be two trajectories in $\mc P_{\mathcal{C}^+}$. Then  $\Ga$ and $\Delta$ have flat contact.	
	\end{lemma}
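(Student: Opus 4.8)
The plan is to show that two trajectories $\Ga,\Delta$ asymptotic to the same formal half-branch $\mathcal C^+$ of a non-degenerated invariant curve have flat contact, i.e.\ they admit a common adapted chart $C$ in which $\gamma_C-\delta_C$ is flat. The natural chart to use is one built from a \emph{blow-up chart} at the level of $IT(\mathcal C^+)$, after enough blow-ups that the formal curve (hence the common iterated-tangent sequence of $\Ga$ and $\Delta$) becomes ``smooth and transverse'' to the exceptional divisor. Concretely, I would first choose $N$ large enough so that, in a suitable affine chart $(x,y_1,y_2)$ of the $N$-th spherical blow-up, the strict transform of $\mathcal C^+$ is tangent to the $x$-axis with the curve parameterized by $x$, and the pulled-back vector field $\xi_N$ satisfies $dx(\xi_N)>0$ along the strict transforms of $\Ga$ and $\Delta$ (this uses non-degeneracy: $\mathcal C$ is not in the singular locus, so after blowing up the vector field is nonzero and transverse to $\{x=0\}$ along the axis). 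Since blow-ups are real analytic and, after restricting, subanalytic, they are definable in $\R_{an}$, so this chart is a definable chart adapted to both $\Ga$ and $\Delta$ in the sense of Section~\ref{sec:trajectories}.

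**The key estimate** is then flatness of $\eps:=\delta_C-\gamma_C$. I would argue as follows: because $\Ga$ and $\Delta$ are asymptotic to the \emph{same} half-branch, after every further blow-up their strict transforms still share the same infinitely near point; equivalently, in the chart $C$ above, the two plane curves $x\mapsto\gamma_C(x)$ and $x\mapsto\delta_C(x)$ remain arbitrarily close after iterated blow-ups of the origin in the $(y_1,y_2)$-plane parameterized by $x$. A standard computation (as in \cite{Can-Mou-San1,Can-Mou-San2}) shows that "staying in the same blow-up chart tower at order $k$" forces $\|\eps(x)\|=o(x^{k})$ for every $k$, which is exactly flat contact. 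Alternatively, and perhaps more cleanly, I would use that in the adapted chart both $\gamma_C$ and $\delta_C$ are solutions of one and the same definable system $(S_{F_C})$, and that asymptoticity to $\mathcal C^+$ pins down the common formal (Taylor) expansion of $\gamma_C$ and $\delta_C$ to be the formal curve $\mathcal C^+$ to all orders; since $\mathcal C^+$ lies in $(x\R[[x]])^2$ and has a definite order, the difference of two solutions with the same asymptotic expansion to all orders is flat — here one invokes that the coefficients of $F_C$ are definable (hence, after restricting, the relevant comparison functions are polynomially bounded), so "having the same $k$-jet for all $k$" upgrades to the quantitative bound $\|\eps(x)\|\le x^k$ for small $x$.

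**The main obstacle** is the existence of a \emph{common} adapted chart: Definition~\ref{def:separatedtrajectories} of flat contact for trajectories insists on a single chart adapted to both $\Ga$ and $\Delta$ simultaneously, and — as the excerpt itself warns via \cite[Example 21]{San} — this is not automatic for arbitrary trajectories in a common flat horn. What saves us here is precisely the non-degeneracy of the axis together with the shared iterated-tangent sequence: after the finite blow-up process the axis becomes a smooth arc transverse to the exceptional divisor and transverse to $\{dx(\xi_N)=0\}$, so one honest affine blow-up chart works for the whole pencil near the axis. I would therefore spend the bulk of the argument carefully verifying (i) that finitely many spherical blow-ups suffice to reach the transverse-smooth situation, using that $\mathcal C$ is a genuine (irreducible) formal curve and invariant, and (ii) that in that chart $dx(\xi_N)$ is ultimately positive along the relevant trajectories. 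Once the common adapted chart is secured, the flatness estimate is the comparatively routine part, handled by the jet-comparison/polynomial-boundedness argument sketched above; I expect the write-up to reference the blow-up computations of \cite{Can-Mou-San1} rather than redo them.
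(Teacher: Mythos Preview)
Your proposal is correct and follows essentially the same strategy as the paper: use non-degeneracy of the axis together with finitely many blow-ups along $IT(\mathcal C^+)$ to obtain a common adapted analytic chart, then deduce flatness from the fact that both $\gamma_C$ and $\delta_C$ share the same formal (Puiseux) expansion $\mathcal C$. The paper's execution of the flatness step is slightly leaner than yours: rather than invoking polynomial boundedness or jet comparisons for $F_C$, it simply writes $\mathcal C(t)=(t^\nu,\theta(t))$, records that asymptoticity to $\mathcal C^+$ means $\|\gamma_B(t^\nu)-J_N\theta(t)\|=o(t^N)$ for all $N$ (and likewise for $\delta_B$), and concludes $\|\gamma_B-\delta_B\|\le x^n$ by the triangle inequality through $J_{n\nu}\theta$.
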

	\begin{proof}
		Let $\mathcal{C}$ be the formal curve at $(\R^3,0)$ {having $\mathcal{C}^+$ as} one of its half-branches.  Let $B=(U,(x,y,z))$ be an analytic chart at $0\in\R^3$ such that $\mathcal{C}^+$ is not contained in $x=0$. Since $\mathcal{C}$ is non-degenerated, $\mathcal{C}$ is not contained in the set $\{dx(\xi)=0\}$. Up to considering a finite composition of blow-ups at points in the sequence of iterated tangents of $\mathcal{C}^+$, we can assume that $dx(\xi)$ has constant nonzero sign over an open subanalytic set containing the germs of $\Ga$ and $\Delta$.  %Then, up to changing $x$ by $-x$, $B$ is adapted for both $\Ga$ and $\Delta$.
		This shows that the chart $B$ is adapted to $\Ga$ and $\Delta$, up to replacing $x$ by $-x$.
		
		Consider a parameterization of $\mathcal{C}$ of Puiseux type
		$\mathcal{C}(t)=(t^\nu,\theta(t))\in(t\R[[t]])^3$ in the chart $B$. From the definition of iterated tangents, we get the following property for $\g_B$: 	%
		\begin{equation}\label{eq:asymptotic}
			\forall N\in\N_{\ge 1},\ \; \|\g_B(t^\nu)-J_N\theta(t)\|=o(t^N),\;\mbox{ for }t>0,
		\end{equation}
		where $J_N\theta(t)$ denotes the truncation of $\theta(t)$ up to order ~$N$. We say that $\gamma_B$ has $\mathcal{C}$ as \emph{Puiseux expansion} if (\ref{eq:asymptotic}) holds.
		
		Similarly, $\delta_B$ has $\mathcal C$ as Puiseux expansion. Thus, given $n\in\N$, if $\varepsilon_n>0$ is such that
		\[ 
		\sup\{\|\g_B(t^\nu)-J_{n\nu}\theta(t)\|,\|\g_B(t^\nu)-J_{n\nu}\theta(t)\|\}\le \frac{1}{2}t^{n\nu},\;\mbox{ for }0<t<\varepsilon_n,
		\]
		then we obtain that $\|\g_B(x)-\delta_B(x)\|\le x^n$ for any $x<\varepsilon_n^{\nu}$. 
		The associated respective parameterizations $\g_B,\delta_B:(0,a)\to\R^2$ of $\Ga,\Delta$ have flat contact as required.
	\end{proof}
	\begin{remark}
		We notice that {for a trajectory, to have a given Puiseux expansion is necessary and sufficient to belong to the corresponding integral pencil. More precisely}, if $\Ga$ is a trajectory, $B=(U,(x,y,z))$ an adapted chart, $(x,\g_B(x))$ the associated parameterization and $\mathcal{C}^+$ the half-branch of $\mathcal C(t)=(t^\nu,\theta(t))$ contained in $x>0$, then $\Ga$ belongs to $\mc P_{\mathcal{C}^+}$ if and only if $\g_B$ satisfies (\ref{eq:asymptotic}).
		
		%\textcolor{red}{Also, it is worth noting that the condition that $\mathcal C$ is non-degenerate is necessary: the example \cite[Example 21]{San} shows an integral pencil $\mathcal P_{\mathcal C^+}$ with degenerate convergent axis for which any local chart $(U,(x,y,z))$, analytic at the origin and such that $x=0$ is transversal to $\mathcal C^+$, is not adapted for any member $\Gamma$ of $\mathcal P_{\mathcal C^+}$. At least for these charts, the definition of flat contact by means of common parametrization of two trajectories does not apply (cf. definition 3.1).}
		
	\end{remark}
	
	\strut
	
	Let us prove that an integral pencil with a non-degenerated axis has at least {one} trajectory with the regular separation property.
	
	\begin{theorem}\label{pro:formal-axis}
		Let $\mc P_{\mathcal{C}^+}$ be an integral pencil with a non-degenerated axis $\mc C^+$ of an analytic vector field $\xi$ at $(\R^3,0)$. Let $S$ be a subanalytic set such that $\mathcal{C}^+\subset S$ {(in the sense of the second paragraph of the present section)} and having minimal dimension among those subanalytic sets with this property. Then, the following holds:
		\begin{enumerate}%[(i)]
			\item There exists at least one trajectory in the pencil $\mc P_{\mathcal{C}^+}$ contained in $S$.
			\item {A trajectory $\Ga\in\mc P_{\mathcal{C}^+}$}
			%Given $\G\in\mc P_{\mathcal{C}^+}$, we have that $\Ga$ 
			has the regular separation property if and only if $\Ga\subset S$ {in a neighborhood of $0$}.
		\end{enumerate}
	\end{theorem}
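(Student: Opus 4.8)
The plan is to transport the problem, via blow-ups and a cell decomposition, to the study of a single cell $E$ containing the formal half-branch, to make $E$ invariant by a non-degeneracy argument, and then to combine the minimality of $\dim S$ with the non-oscillation of $\mathcal C^+$ with respect to subanalytic sets (as in \cite{Can-Mou-San1}). Concretely, I first use the construction in the proof of Lemma \ref{lm:flat-contact-pencil}: after finitely many spherical blow-ups along $IT(\mathcal C^+)$ I place myself in a chart $B=(U,(x,y_1,y_2))$ in which $dx(\xi)>0$ on $U$, every trajectory of $\mc P_{\mathcal C^+}$ is parameterized by $x$ as $(x,\gamma_B(x))$ solving $(S_{F_B})$, and all of them share the Puiseux expansion $\mathcal C(t)=(t^\nu,\theta(t))$. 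I replace $S$ by the union of its strict transforms restricted to $U$; this preserves minimality of $\dim S=:d$, since blowing down cannot produce a subanalytic set of strictly smaller dimension still containing $\mathcal C^+$. I fix a $C^1$ cell decomposition of $U$ adapted to $S$ and making $f_{1B},f_{2B}$ of class $C^1$ on each cell. Since $\mathcal C^+$ has iterated tangents it is non-oscillating with respect to subanalytic sets, hence it is contained in a single cell $E$; by minimality $\dim\overline E=d$, and as $\overline E$ is subanalytic and contains $\mathcal C^+$ I may replace $S$ by $\overline E$. Finally I make $E$ invariant: the tangency locus $\Lambda=\{p\in E:\xi(p)\in T_pE\}$ is subanalytic and contains $\mathcal C^+$ --- because $\mathcal C^+\subset E$ forces the defining equations of $E$ to vanish on $\mathcal C(t)$ to all orders, whence $\mathcal C'(t)\in T_{\mathcal C(t)}E$ formally, and non-degeneracy of the axis gives $\xi|_{\mathcal C}=h(t)\,\mathcal C'(t)$ with $h\not\equiv0$, so $\xi(\mathcal C(t))\in T_{\mathcal C(t)}E$ formally; by minimality $\dim\Lambda=d$, so, refining the decomposition to be adapted to $\Lambda$ and repeating the previous step, I may assume $E\subset\Lambda$, i.e. $E$ is invariant near $\mathcal C^+$.

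\emph{Proof of (1).} If $d=1$ then $E$ is a subanalytic arc containing $\mathcal C^+$, so $\theta$ is convergent, $E$ is the germ of the invariant, non-degenerate analytic curve $\{\mathcal C(t)\}$, and $E\setminus\{0\}$ is the trajectory sought. If $d\in\{2,3\}$, restricting $\xi$ to the invariant $C^1$ submanifold $E$ yields a $C^1$ vector field with subanalytic-type coefficients and ambient dimension $d-1\le2$, still admitting $\mathcal C$ as a formal invariant curve with the same iterated tangents; one then produces an integral curve of it asymptotic to $\mathcal C^+$, which, lying in $E\subset S$, proves (1). This last existence statement is the \textbf{main obstacle}. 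I expect to get it by a compactness/topological degree argument on the blow-up tower of $\xi|_E$, in the spirit of \cite{Can-Mou-San1}, after reducing the singularity of $\xi|_E$ along $\mathcal C$ (possible with finitely many blow-ups because $\mathcal C\subset S$ and $S$ is subanalytic), or else by an induction on the ambient dimension using the present theorem inside $E$, where $\mathcal C$ is necessarily ``transcendent'' as $E$ was already minimal in $\R^3$.

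\emph{Proof of (2).} $(\Rightarrow)$ Assume $\Ga$ has the regular separation property; by Proposition \ref{propo:reg-sep-cdv} we may use the chart $B$. The map $p\mapsto\mathrm{dist}(p,\overline E)$ is subanalytic and its domain contains the graph of $\gamma_B$; since $\mathcal C^+\subset\overline E$ and $\gamma_B$ has Puiseux expansion $\mathcal C$, we get $\mathrm{dist}((x,\gamma_B(x)),\overline E)=o(x^N)$ for every $N$. Regular separation of $\gamma_B$ then forces this function to vanish identically near $0$ (it cannot be bounded below by any $x^k$), i.e. $\Ga\subset\overline E=S$. $(\Leftarrow)$ Assume $\Ga\subset S=\overline E$. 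As $IT(\Ga)=\mathcal C^+$ is not contained in $\overline{\partial E}$ (which is subanalytic of dimension $<d$, contradicting minimality), $\Ga$ meets $\overline{\partial E}$ only finitely often, hence $\Ga$ is eventually contained in $E$. Let $f:\R^3\to\R$ be subanalytic with the graph of $\gamma_B$ in its domain; by non-oscillation of $\mathcal C^+$ the composition $f(\mathcal C(t))$ is a formal Puiseux series, so it either vanishes or has finite order. In the first case $\mathcal C^+\subset f^{-1}(0)\cap E$, which by minimality has dimension $d$; adapting the decomposition to $f^{-1}(0)$ yields, as before, a $d$-dimensional cell $E'\ni\mathcal C^+$ with $E'\subset f^{-1}(0)$, and $\Ga$ is eventually in $E'$, so $f(x,\gamma_B(x))\equiv0$. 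In the second case, of order $k_0$, one has $\mathcal C^+\subset\{p\in E:|f(p)|\ge|x(p)|^{k}\}$ for a suitable $k$, and the same minimality/cell argument places $\Ga$ eventually inside this set, i.e. $|f(x,\gamma_B(x))|\ge x^k$. This gives condition (i) of Definition \ref{def:separatedtrajectories} for $\gamma_B$; condition (ii) follows the same way with $x^k$ replaced by $x^{1/k}$. By Proposition \ref{propo:reg-sep-cdv} this does not depend on the chart, so $\Ga$ has the regular separation property.

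\emph{Remaining difficulties.} Apart from the existence step in (1), the delicate points are the bookkeeping needed to keep the minimality of $\dim S$ and the ``contained in a single cell'' property stable under the successive blow-ups and cell refinements, and the precise formulation --- to be borrowed from \cite{Can-Mou-San1,Can-Mou-San2} and \cite{Rol-San-Sch} --- of ``$f$ composed with the formal curve $\mathcal C$ is a Puiseux series'' and of the exclusive dichotomy ``$\mathcal C^+\subset X$ or $\mathcal C^+\subset\R^3\setminus X$'' for subanalytic $X$, on which the whole argument rests.
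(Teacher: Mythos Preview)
Your overall architecture --- replace $S$ by a minimal cell containing $\mathcal C^+$, force invariance via the tangency locus, then handle (1) by restriction and (2) by a distance/containment argument --- runs parallel to the paper's, and for part (1) the paper likewise reduces to an invariant smooth piece and then quotes known existence results (Seidenberg for $s=2$, Bonckaert \cite{bon} for $s=3$) rather than redoing a degree argument. So your instinct there is right, and you have correctly identified that step as the one requiring outside input.

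The genuine gap is in your proof of the $(\Leftarrow)$ direction of (2). You write that ``by non-oscillation of $\mathcal C^+$ the composition $f(\mathcal C(t))$ is a formal Puiseux series'', and you then split cases according to its order. But $f$ is only subanalytic, and composition of a subanalytic function with a \emph{divergent} formal series is not defined; there is no Puiseux series $f(\mathcal C(t))$ to speak of, and ``non-oscillation'' is a property of the actual trajectory $\Gamma$, not of the formal half-branch. Consequently the sets $\{|f|\ge |x|^k\}$ need not contain $\mathcal C^+$ in the iterated-tangents sense, and your cell-refinement argument has nothing to bite on. The paper proceeds differently: it first invokes the non-oscillation of $\Gamma$ (from \cite{Can-Mou-San1}) to get a sign for $h(x,\gamma_B(x))$, uses Lion's Preparation Theorem \cite{Lio} for condition (ii) of Definition~\ref{def:separatedtrajectories}, and for condition (i) passes to a blow-up level at which $\partial S$ has been pushed away from $\mathcal C^+$ and applies the \L{}ojasiewicz inequality to $h\circ\Pi_n$ and $x\circ\Pi_n$ on $\overline{\Pi_n^{-1}(S\setminus\{0\})}$. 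That inequality is what produces the lower bound $|h(x,\gamma_B(x))|\ge c\,x^\alpha$ and cannot be replaced by a formal-order argument.

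A secondary point concerns the order of the two implications in (2). Your $(\Rightarrow)$ argument asserts that $\mathrm{dist}((x,\gamma_B(x)),\overline E)=o(x^N)$ directly from $\mathcal C^+\subset\overline E$; this is plausible but not justified as stated (containment of iterated tangents does not immediately give pointwise proximity). The paper avoids this by proving $(\Leftarrow)$ first, then combining it with part (1) to obtain a concrete trajectory $\Gamma'\subset S$ with regular separation; flat contact of $\Gamma$ and $\Gamma'$ (Lemma~\ref{lm:flat-contact-pencil}) gives $\mathrm{dist}((x,\gamma_B(x)),S)\le\|\gamma_B(x)-\gamma'_B(x)\|$, which is flat, and regular separation of $\Gamma$ then forces $\Gamma\subset\overline S$, with $\Gamma\not\subset\partial S$ by minimality of $\dim S$.
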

	
	\begin{proof}
		Let $s=\dim S$.
		If $s=1$ then $\mathcal{C}$ is a convergent analytic curve and $S$  contains the connected component, $\Ga$, of $\mathcal{C}\setminus\{0\}$ whose oriented iterated tangents correspond to $\mathcal{C}^+$. Since $\Ga$ is invariant for $\xi$ and not contained in the singular locus of $\xi$, $\Ga$ satisfies (i). The conclusion of (ii) is also clear in this case: a trajectory having the regular separation property and flat contact with the trajectory $\Ga$ (definable) must coincide with $\Ga$ (as germs).
		
		Assume that $s>1$.
		Up to taking a stratification of $S$, we may assume that $S$ is an analytic submanifold of pure dimension $s$. Moreover, we may also assume that $S$ is everywhere tangent to $\xi$ (otherwise, the locus of tangency between $S$ and $\xi$ {would} be a subanalytic set containing $\mathcal{C}^+$ and of dimension less than $s$).
		
		In order to prove (i), it suffices to prove it after a blow-up $\pi_Y:M\to\R^3$ with smooth analytic center $Y$ through $0$ such that $\dim(Y)\le 1$. 
		In fact, for any such center we have $\mathcal{C}^+\not\subset Y$ 
		(since $\mathcal{C}^+$ is not convergent because $s>1$), 
		so that the transform $\wt{\mathcal{C}^+}$ 
		of $\mathcal{C}^+$ by $\pi_Y$ is a well defined formal half-curve at some $p\in M$. 
		Moreover, $\wt{\mathcal{C}^+}$ is invariant by a vector field $\wt{\xi}$ at $(M,p)$ and, 
		using the characterization (\ref{eq:asymptotic}) {via Puiseux expansions}, 
		$\pi_Y$ establishes a bijection between the pencils $\mc P_{\wt{\mathcal{C}^+}}$ and 
		$\mc P_{\mathcal{C}^+}$ of $\wt{\xi}$ and $\xi$ respectively. With this remark and using 
		Hironaka's Rectilinearization Theorem \cite{Hir}, we may assume that $S$ is a smooth 
		analytic manifold at $0$. 
		When $s=2$, item (i) is a consequence of Seidenberg's reduction of planar vector fields 
		\cite{Sei} ($\mathcal C$ correspond to either a stable, unstable, or central manifold, up to further punctual blowing-ups). When $s=3$, item (i) follows from Bonckaert \cite[Theorem 2.1]{bon}.

		Let us prove (ii) in the case $s>1$. Suppose {that} $\Ga\in \mc P_{\mathcal{C}^+}$ is contained in $S$ near $0$.
		%Let $\Ga$ be a trajectory of $\xi$ contained in $S$ whose germ belongs to $\mc P_{\mathcal{C}^+}$. 
		Consider an analytic chart $B=(U,(x,y_1,y_2))$ at the origin in which $\mathcal{C}$ has a Puiseux's parameterization $\mathcal{C}(t)=(t^\nu,\theta(t))$ and so that $\mathcal{C}$ is transversal to $\{x=0\}$. Up to changing the sign of $x$, we may assume that $\Ga\subset\{x>0\}$. Note that $\Ga$ is {non-oscillating with respect to semi-analytic sets} {by \cite[Th\'eor\`eme 1]{Can-Mou-San1}} since $\mathcal{C}$ is divergent ({if it were convergent, we would have $\Gamma$ equal to one of its half branches and in fact $s=1$}). Thus, according to {\cite[Lemme 1.9]{Can-Mou-San2}}, the chart $B$ is adapted to $\Ga$.	
		Consider the solution $\g_B:(0,a)\to\R^2$ associated to $\Ga$ in this chart and let us show that $\g_B$ satisfies the two conditions (i) and (ii) of Definition \ref{def:separatedtrajectories}.
		
		Let $h$ be a subanalytic function such that $\Ga\subset {\rm Dom}(h)$. We notice that $\Ga$ is also {non-oscillating} with respect to subanalytic sets (\cite[Cor. 1.5]{Can-Mou-San2}) so that the function $x\mapsto h(x,\g_B(x))$ has ultimately a sign when $x\to 0$. Assume that it is positive for any $x\in (0,a)$ and that $\lim_{x\to 0}h(x,\g_B(x))=0$.
		
		First, by (\ref{eq:asymptotic}), we have that $\|\g_B(x)\|\leq x^{1/N}$ for some $N>0$ sufficiently big and for small $x$. Using Lion's Preparation Theorem for subanalytic functions \cite{Lio} and again the {non-oscillation} of $\Ga$ with respect to subanalytic sets, we obtain that  $|h(\g_B(x))|\le x^{1/N'}$ for $N'>0$ sufficiently big. This proves the condition (ii) of Definition \ref{def:separatedtrajectories}. 
		
		{Let us now show condition (i) of Definition \ref{def:separatedtrajectories}. 
			For this, assume {that} $h_{|\Gamma}$ does not identically vanish near $0$.}
		Taking a subanalytic stratification adapted to $S$ and $Z:=h^{-1}(0)$, we may assume that $S\cap Z=\emptyset$. 
		{Let} 
		$\mathcal{C}^+:=\{p_n\}$, 
		$M_0:=\R^3$, and {let}
		$\pi_n:M_n\to M_{n-1}$ {be}  the spherical blow-up at $p_{n-1}$ for $n\ge 1$. Denote also 
		{$\Pi_n:=\pi_{1}\circ\pi_{2}\circ\cdots\circ\pi_n:M_n\to\R^3$.}
		By minimality of $s$,
		$\mathcal{C}^+\not\subset\partial S$, and hence there exists $n\ge 1$ 
		such that $p_n\not\in\overline{\Pi_n^{-1}(\partial S\setminus\{0\})}$. 
		Let $V$ be an open neighborhood of $p_n$ in $M_n$ such that 
		$V\cap\Pi_n^{-1}(\partial S\setminus\{0\})=\emptyset$. 
		{Set} $g:=h\circ\Pi_n$ and let us show that
		\begin{equation}\label{eq:zeros-g}
			\{g=0\}\,\cap V\cap\,\overline{\Pi_n^{-1}(S\setminus\{0\})}\subset D:=\Pi_n^{-1}(0).
		\end{equation}
		Let $q\in V\cap\overline{\Pi_n^{-1}(S\setminus\{0\})}$ such that $q\not\in D$ and suppose that $g(q)=0$. Then 
		$\Pi_n(q)\in\overline{S}\setminus\{0\}$ and $\Pi_n(q)\in Z$. 
		Since $\overline{S}\cap Z\subset\partial S$, we would have 
		$q\in\Pi_n^{-1}(\partial S\setminus\{0\})$, which is impossible because 
		$\Pi_n^{-1}(\partial S\setminus\{0\})\cap V=\emptyset$. This shows (\ref{eq:zeros-g}).
		
		%Put also 
		{Now, the map $f:={x\circ\Pi_n}_{|V}$ is} a subanalytic function in $V$ satisfying 
		\[ \{f=0\}\cap\overline{\Pi_n^{-1}(S\setminus\{0\})}\subset D\cap V.\] Applying \L{}ojasiewicz Inequality to $f,g$ (see for instance \cite{Hir} or \cite{Bie-M}), there exists $c>0$, $\alpha>0$ such that
			\[ 
		|g(q)|\ge c |f(q)|^\alpha,\;\forall q\in\overline{\Pi_n^{-1}(S\setminus\{0\})}.\]
		Using this last inequality for the points $q\in\Pi_n^{-1}(\Ga)\subset\Pi_n^{-1}(S\setminus\{0\})$ we {finally} obtain for any $x>0$ sufficiently small {that}
		\[
		h(x,\g_B(x))\ge c (x(x,\g_B(x)))^\alpha=c x^\alpha.
		\]
		This proves the regular separation property for the solution $\g_B$ (condition (i) of Definition \ref{def:interlaced-separated}) and finishes the proof of the ``if'' part of  Theorem \ref{pro:formal-axis} (ii).
		
		Finally, let us prove the ``only if'' part of Theorem \ref{pro:formal-axis} (ii). 
		Let $\Delta\in P_{\mathcal{C}^+}$. 
		{Assume }that $\Delta$ has the regular separation property.
		Consider $B=(V, (x,y_1,y_2))$ an adapted chart for $\Delta$ and denote 
		$x\mapsto (x,\delta_B(x))\in V$
		the parameterization of $\Delta$ in this chart. 
		Since $\delta_B$ has the regular separation property, either $(x,\delta_B(x))\in S$ ultimately, 
		or $(x,\delta_B(x))\notin S$ for all small $x$ (Lemma \ref{lem:1}). We assume the latter,
		and get to a contradiction. 
		From {the} already proven {assertion} (i) and {the} ``if'' part of {assertion} (ii) in Theorem \ref{pro:formal-axis},
		$S$ contains a trajectory $\Gamma$ with regular separation.
		By Lemma \ref{lm:flat-contact-pencil}, $\Gamma$ and $\Delta$ have flat contact. 
		Then from Proposition \ref{prop:flat}, $B$ is adapted to $\Gamma$, 
		and $\gamma_B$ and $\delta_B$ have flat contact,
		where $(x,\gamma_B(x))$ parameterizes $\Gamma$. 
		Let $d:V\to\mathbb R$ be the subanalytic function $p\mapsto \inf_{q\in S\cap V}||p-q||$, that is, the distance to ${S}$ (in the chart $B$).
		By regular separation, $d(x,\delta_B(x))$ is either identically zero or bounded {from} below by a power
		of $x$. But $d(x,\delta_B(x))\le ||\delta_B(x)-\gamma_B(x)||$ and $||\delta_B-\gamma_B||$
		is ultimately smaller than any power of $x$, so $d(x,\delta_B(x))\equiv 0$, which means {that}
		$(x,\delta_B(x))\in\overline{S}$. Finally, $\Delta\subset \overline{S}\setminus S=\partial S$ near $0$.
		Since $IT(\Delta)=\mathcal C^+$, this implies {that} $\mathcal C^+\subset \partial S$. 
		But $\dim \partial S < \dim S$, which contradicts the minimality of $s$.
		This achieves the proof of Theorem \ref{pro:formal-axis}.\end{proof}

	Using Theorem~\ref{pro:formal-axis} and Lemma~\ref{lm:flat-contact-pencil}, we can apply the results obtained in section~\ref{sec:trajectories} to pairs of trajectories in a pencil with {non-degenerated} axis $\mathcal{C}^+$, provided that {one of them} belongs to {the} subanalytic set $S$ of minimal dimension {$s$} which contains the formal half-axis.
	
	\vspace{.2cm}
	
	{\bf Application to pencils with analytic axis.-}
	The case 
	$s=1$ corresponds to 
	{convergent axis.}
	{Then the unique trajectory $C^+$ of $\mc P_{\mathcal C^+}$ having the regular separation property
		is obtained as the sum of the series $\mathcal C(t)$ with a sign condition on $t$.}
	In this case, Theorem~\ref{th:main} 
	{says that, given another}
	trajectory $\Delta\in\mc P_{\mc C^+}$, either the pair $C^+,\Delta$ is interlaced or 
	the ring of germs of subanalytic functions restricted to $\Delta$ is a Hardy field.
	It is well known that the latter is equivalent to being non-oscillating {with respect to analytic sets}, (and non-oscillation with respect to
	analytic sets or to subanalytic sets {are} equivalent for trajectories, see \cite[Corollaire 1.5]{Can-Mou-San2}), so
	in this case, Theorem~\ref{th:main} only recovers the alternative non-oscillating/spiraling of \cite[Th\'eor\`eme 1]{Can-Mou-San1}.
	
	Despite this discussion, Theorem~\ref{th:defvf} {provides us with} a little bit {more information} in the non-oscillating situation: since the axis $ C^+$ 
	has also the regular separation property {in}
	any polynomially bounded o-minimal structure {wich} defines the subanalytic sets, 
	we can enlarge the class of sets 
	a trajectory in the pencil 
	{does not oscillate with.}
	More precisely, we have:

	\begin{corollary}\label{cor:analytic-axis}
		Let $\mc P_{\mathcal{C}^+}$ be an integral pencil of an analytic vector field at $(\mathbb{R}^3,0)$ with a non-degenerated convergent axis {$\mathcal{C}^+$}. Let $\Delta\in\mc P_{\mathcal{C}^+}$ be a non-oscillating trajectory of the pencil parameterized by $(x,\delta_B(x))$ in an analytic chart $B=(U,(x,y_1,y_2))$ at the origin for which $\mc C^+\subset\{x>0\}$. Then, for any polynomially bounded o-minimal structure $\mathcal{R}$ {expanding} $\mathbb{R}_{an}$, the ring $\mathcal{F}_{\mathcal{R}}(x,\delta_B)$ of germs at $x=0$ of functions of the form $x\mapsto h(x,\delta_B(x))$, where $h$ is definable {in} $\mathcal{R}$, is a Hardy field.
	\end{corollary}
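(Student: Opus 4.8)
The plan is to apply Theorem~\ref{th:main}, this time with the enlarged structure $\mathcal{R}$, to the pair of solutions $\gamma_B,\delta_B$ attached in the chart $B$ to the trajectories $C^+$ and $\Delta$, where $C^+$ is the convergent (hence analytic) trajectory obtained as the sum of the series $\mathcal{C}(t)$ subject to $t>0$. Since $\xi$ is analytic it is definable in $\mathbb{R}_{an}$, hence in $\mathcal{R}$, so the system $(S_{F_B})$ is definable in $\mathcal{R}$ and $\gamma_B,\delta_B$ are two of its solutions at $0$; likewise $C^+$ is definable in $\mathbb{R}_{an}\subseteq\mathcal{R}$, so the components $\gamma_{B,1},\gamma_{B,2}$ of $\gamma_B$ are germs definable in $\mathcal{R}$. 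The key observation is that $\gamma_B$ then has the regular separation property \emph{with respect to $\mathcal{R}$}: for any $\mathcal{R}$-definable $f$ whose domain contains the graph of $\gamma_B$, the germ $g(x):=f(x,\gamma_B(x))$ is definable in $\mathcal{R}$, and since $\mathcal{R}$ is o-minimal and polynomially bounded, either $g\equiv 0$ near $0^+$ or $|g(x)|=|c|\,x^{r}(1+o(1))$ for some $c\neq 0$ and some real $r$; in the latter case $|g(x)|\geq x^{k}$ for small $x>0$ as soon as $k$ is an integer with $k>r$ (or any $k\geq 0$ if $r\leq 0$). This is the only place where polynomial boundedness of $\mathcal{R}$ intervenes, and it is what separates the argument from the already-known case $\mathcal{R}=\mathbb{R}_{an}$.

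Next I would record that $\gamma_B$ and $\delta_B$ have flat contact. Both $C^+$ and $\Delta$ belong to the pencil $\mathcal{P}_{\mathcal{C}^+}$ with non-degenerated axis, so Lemma~\ref{lm:flat-contact-pencil} applies and, its proof being carried out precisely in an analytic chart where both trajectories admit the given Puiseux expansion, it yields $\|\gamma_B(x)-\delta_B(x)\|\leq x^{n}$ for every $n$ and all small $x>0$ in our chart $B$ (where $\gamma_B$ is the sum of $\mathcal{C}$ and $\delta_B$ has Puiseux expansion $\mathcal{C}$ because $\Delta\in\mathcal{P}_{\mathcal{C}^+}$). It is also worth noting that, since each $\gamma_{B,i}$ is $\mathcal{R}$-definable, substitution gives $\mathcal{F}_{\mathcal{R}}(x,\gamma_B,\delta_B)=\mathcal{F}_{\mathcal{R}}(x,\delta_B)$: the inclusion $\supseteq$ is immediate, and for $\subseteq$ one writes $f(x,\gamma_B(x),\delta_B(x))=h(x,\delta_B(x))$ with the $\mathcal{R}$-definable function $h(x,z_1,z_2):=f(x,\gamma_{B,1}(x),\gamma_{B,2}(x),z_1,z_2)$.

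With these facts, Theorem~\ref{th:main} applied to $(S_{F_B})$, $\gamma_B$ and $\delta_B$ over $\mathcal{R}$ yields that either $\mathcal{F}_{\mathcal{R}}(x,\gamma_B,\delta_B)=\mathcal{F}_{\mathcal{R}}(x,\delta_B)$ is a Hardy field -- which is exactly the assertion of the corollary -- or $\gamma_B$ and $\delta_B$ are interlaced, and it remains to exclude the latter. If $\gamma_B$ and $\delta_B$ were interlaced, the angle of $\varepsilon:=\delta_B-\gamma_B$ would diverge, so $\varepsilon_1:=\delta_{B,1}-\gamma_{B,1}$ would change sign, hence vanish, infinitely often as $x\to 0^+$. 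But $\gamma_{B,1}(x)$ is a convergent Puiseux series in $x$, so the map $(x,z_1,z_2)\mapsto z_1-\gamma_{B,1}(x)$ is subanalytic on $\{x>0\}$; its restriction to the $x$-parameterization of $\Delta$ is the germ $x\mapsto\varepsilon_1(x)$, and since $\Delta$ is non-oscillating with respect to (sub)analytic sets this germ has an ultimate sign (see \cite{Can-Mou-San1,Can-Mou-San2}), a contradiction. Therefore $\gamma_B$ and $\delta_B$ are not interlaced, and the corollary follows.

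The only genuinely substantive step -- and the one I expect to be the main obstacle -- is the first paragraph: making sure the analytic axis $C^+$ keeps the regular separation property when the ambient structure is enlarged from $\mathbb{R}_{an}$ to $\mathcal{R}$. This is in fact easy, because $C^+$ remains definable and $\mathcal{R}$ remains polynomially bounded, so germs of definable functions along $C^+$ stay comparable to powers of $x$; everything else is a transfer through Theorem~\ref{th:main} and the non-oscillating-versus-spiraling dichotomy already available for trajectories of analytic vector fields.
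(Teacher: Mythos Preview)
Your proof is correct and follows essentially the same approach the paper sketches in the paragraph preceding the corollary: the convergent axis $C^+$ is definable in $\mathcal{R}\supseteq\mathbb{R}_{an}$, hence $\gamma_B$ retains the regular separation property in $\mathcal{R}$ by polynomial boundedness, flat contact comes from Lemma~\ref{lm:flat-contact-pencil}, and non-oscillation of $\Delta$ rules out interlacement so that Theorem~\ref{th:main} yields the Hardy field conclusion. Your explicit observation that $\mathcal{F}_{\mathcal{R}}(x,\gamma_B,\delta_B)=\mathcal{F}_{\mathcal{R}}(x,\delta_B)$ via definability of $\gamma_B$ is a clean way to bridge the conclusion of Theorem~\ref{th:main} with the statement of the corollary.
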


	\vspace{.2cm}
	
	{\bf Applications to transcendental axis.-}
	If $\mathcal C^+$ is not included in {any} subanalytic set of positive codimension, (this is, when $s=3$), we will say that the axis $\mathcal{C}^+$ is {\em subanalytically transcendental}. In this case, all the trajectories in the pencil have the regular separation property and the results in section~\ref{sec:trajectories}  fully apply to any pair of trajectories {in it}.

	We remark that interlaced integral pencils of non-oscillating trajectories provide examples of pencils with subanalytically transcendental formal axis (see \cite[Th\'{e}or\`{e}me II and discussion p. 29]{Can-Mou-San2}).
	In {this} case, Theorem~\ref{th:main} gives no more information for the relative behavior of a pair of trajectories than the one already established in that reference: the trajectories are two by two interlaced.
	
	On the {other hand}, if we have a non-interlaced integral pencil with subanalytically transcendental axis, as already mentioned in section~\ref{sec:trajectories}, we obtain a better description than the ``s\'{e}par\'{e}'' condition of \cite{Can-Mou-San2} for {pairs} of trajectories. {The following results summarizes this new contribution}:

	\begin{corollary}\label{cor:separated-trans-pencil}
		Let $\mc P_{\mathcal{C}^+}$ be a separated (i.e. non-interlaced) integral pencil of an analytic vector field at $(\mathbb{R}^3,0)$ with a formal subanalytically transcendental axis. Then, given any pair of trajectories $\Ga,\Delta\in\mc P_{\mc C^+}$, the following assertions hold:
		\begin{enumerate}
			\item Any analytic chart $B=(U,(x,y_1,y_2))$ at the origin such that $\mathcal{C}^+\subset\{x>0\}$ is adapted to both $\Ga,\Delta$ and, being $(x,\g_B(x))$ and $(x,\delta_B(x))$ their respective parameterizations in $B$, the ring $\mathcal{F}_B(x,\g_B,\delta_B)$ of germs at $x=0$ of functions of the form $x\mapsto h(x,\g_B(x),\delta_B(x))$, where $h$ is a subanalytic function, is a Hardy field.
			\item For any subanalytic submersion $F:V\to\R^2$ defined in an open set $V\subset\R^3$ containing $\Ga\cup\Delta$, and satisfying $\lim_{p\to 0}F(p)=0$, either $F(\wt{\Ga})=F(\wt{\Delta})$ or $F(\wt{\Ga})\cap F(\wt{\Delta})=\emptyset$ for some representatives $\wt{\Ga}$, $\wt{\Delta}$ of $\Ga,\Delta$, respectively.
		\end{enumerate}
	\end{corollary}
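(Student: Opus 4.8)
The plan is to obtain both assertions directly from Theorems~\ref{th:main}, \ref{th:defvf}, \ref{pro:formal-axis} and Lemma~\ref{lm:flat-contact-pencil}; the substance of the argument is to check that the hypotheses of those results are met and, for item~(2), to read off from the proof of Theorem~\ref{th:defvf} an alternative slightly sharper than its statement. The first step is to observe that, since the axis is subanalytically transcendental, one may take $S=\R^3$ in Theorem~\ref{pro:formal-axis}: it is a subanalytic set containing $\mathcal{C}^+$, and because no subanalytic set of dimension $\le 2$ contains $\mathcal{C}^+$, it has minimal dimension ($s=3$) among those. Transcendence also forces the axis to be non-degenerated (the singular locus of $\xi$ is a subanalytic set of dimension $\le 2$). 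Hence Theorem~\ref{pro:formal-axis}(ii), applied with $S=\R^3$, shows that \emph{every} trajectory of $\mc P_{\mathcal{C}^+}$ has the regular separation property, and Lemma~\ref{lm:flat-contact-pencil} shows that any two trajectories of $\mc P_{\mathcal{C}^+}$ have flat contact. If $\Ga=\Delta$ both assertions are trivial (in~(1), $\mathcal{F}_B(x,\g_B,\delta_B)$ reduces to the ring of subanalytic germs along the single non-oscillating trajectory $\Ga$, a Hardy field; in~(2), $F(\wt{\Ga})=F(\wt{\Delta})$), so from now on I assume $\Ga\neq\Delta$; since the pencil is separated, $\Ga$ and $\Delta$ are not interlaced in the sense of Definition~\ref{def:enlace-separe-traj}.

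For~(1), fix an analytic chart $B=(U,(x,y_1,y_2))$ with $\mathcal{C}^+\subset\{x>0\}$. As $\mathcal{C}$ is divergent, $\Ga$ and $\Delta$ are non-oscillating with respect to semi-analytic sets (\cite[Th\'eor\`eme~1]{Can-Mou-San1}), so $B$ is adapted to both by \cite[Lemme~1.9]{Can-Mou-San2} together with the sign condition on $\mathcal{C}^+$ (the fact that $dx(\xi)$ does not vanish along the trajectories uses once more that the axis is transcendental, hence not contained in $\{dx(\xi)=0\}$). By Proposition~\ref{propo:reg-sep-cdv}, $\g_B$ has the regular separation property of Definition~\ref{def:regular-flat}; by the flat contact obtained above together with Proposition~\ref{prop:flat}, after shrinking $U$ the parameterizations $\g_B$ and $\delta_B$ have flat contact. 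Theorem~\ref{th:main} with $\mathcal{R}=\R_{an}$ then yields that either $\mathcal{F}_B(x,\g_B,\delta_B)$ is a Hardy field, or $\g_B$ and $\delta_B$ are interlaced; the latter would make $\Ga,\Delta$ interlaced trajectories, a contradiction. Hence $\mathcal{F}_B(x,\g_B,\delta_B)$ is a Hardy field.

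For~(2), a subanalytic submersion $F:V\to\R^2$ with $\lim_{p\to0}F(p)=0$ and $V\supset\Ga\cup\Delta$ is exactly a definable projection adapted to $(\Ga,\Delta)$ (Definition~\ref{def:enlace-separe-traj}), so Theorem~\ref{th:defvf} applies and the germ at $0$ of $F(\Ga)\cap F(\Delta)$ has a connected representative. I would then argue that the proof of Theorem~\ref{th:defvf} in fact delivers the cleaner alternative claimed here. Indeed, in the non-degenerate branch of that proof one builds a chart $C'=(V_k,(F_1,F_2,x_k))$ adapted to both $\Ga$ and $\Delta$ in which $F_1$ is the first coordinate, so that $F(\Ga)$ and $F(\Delta)$ are the graphs over $\{F_1>0\}$ of $\g_{C',1}$ and $\delta_{C',1}$; the germ $\g_{C',1}-\delta_{C',1}$ belongs to the Hardy field $\mathcal{F}(x,\g_{C'},\delta_{C'})$ (item~(1) in the chart $C'$, via Proposition~\ref{prop:interlacement} and Theorem~\ref{th:main}), hence is either identically zero near $0$ --- in which case $F(\wt{\Ga})=F(\wt{\Delta})$ --- or ultimately non-vanishing --- in which case $F(\wt{\Ga})\cap F(\wt{\Delta})=\emptyset$. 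In the degenerate branch $F$ is constant equal to $0$ on $\Ga$, and one concludes likewise from the fact that $x\mapsto (F_1^2+F_2^2)(x,\delta_C(x))$ lies in a Hardy field: if it vanishes identically then $F(\wt{\Ga})=F(\wt{\Delta})=\{0\}$, otherwise $F(\wt{\Ga})\cap F(\wt{\Delta})=\emptyset$.

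The main point requiring care is precisely this last step: the literal statement of Theorem~\ref{th:defvf} only gives connectedness of $F(\Ga)\cap F(\Delta)$, and since $\Ga$ and $\Delta$ need not be subanalytic, connectedness alone does not force the dichotomy ``equal or disjoint'' --- one genuinely has to revisit the Hardy-field argument inside that proof. Everything else is routine verification that the hypotheses of the quoted results are satisfied.
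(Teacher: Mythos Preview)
Your proof is correct and is essentially the argument the paper intends: the corollary is stated without proof, as a direct consequence of Theorem~\ref{pro:formal-axis} (with $S=\R^3$), Lemma~\ref{lm:flat-contact-pencil}, Theorem~\ref{th:main} and Theorem~\ref{th:defvf}, and you have assembled these ingredients correctly. Your observation that item~(2) requires the \emph{proof} of Theorem~\ref{th:defvf} rather than its literal statement --- since mere connectedness of $F(\Ga)\cap F(\Delta)$ does not by itself yield the dichotomy ``equal or disjoint'' for non-subanalytic images --- is accurate and worth making explicit; the paper's proof of Theorem~\ref{th:defvf} does establish precisely this dichotomy in each branch, so your appeal to it is legitimate.
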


	\section{Transcendental axis and the SAT condition - Examples.}\label{sec:examples}
	
	In order to {apply Corollary~\ref{cor:separated-trans-pencil}} 
	we need to construct separated pencils with subanalytically transcendental axis. So far, we do not know any such example in the literature. This section is devoted to provide examples of such integral pencils, with different ``sizes'': the pencil may consist of a solitary trajectory, a ``surface'' of trajectories (one-parameter family) or a whole ``open domain'' of trajectories (a two-parameter family).

	The main ingredient used to produce examples is the so called {\em Strongly Analytic Transcendence (SAT) property} introduced in {J.-P. Rolin, R. Schaefke and the third author's} work \cite{Rol-San-Sch}. By its very definition, it expresses a condition of transcendence of a formal curve with respect to analytic sets, {strengthened} by the possibility {of right composition by polynomials} (we recall the definition below). We prove that
	the SAT property assures the transcendence with respect to subanalytic sets (Proposition~\ref{pro:SAT-pencil}). For that, we show that SAT is preserved by local blow-ups with analytic smooth centers and use Hironaka's Rectilinearization Theorem \cite{Hir}.
	
	\strut
	
	A first difficulty is that an analytically transcendental axis (i.e., a formal invariant curve not contained in {any} {analytic} 
	set of positive codimension) may not be subanalytically transcendental, as the following example shows.

	\begin{example}[see also a similar example in \cite{LG:solitary}]\label{ex:an-tr-not-san-tr}\rm
		Let $\xi$ be the analytic vector field 
		\[ \xi(x,y,z) = x^3\frac{\partial}{\partial x} + x(y-x)\frac{\partial}{\partial y} + z(y-x)(1-x)\frac{\partial}{\partial z}.\]
		The vector field $\xi$ is singular at $0$ and $(x, y(x), z(x))$ parameterizes a trajectory of $\xi$ for $x>0$ if and only if
		the functions $y,z$ satisfy the {following} system of differential {equations}:
				\[ 
		\left\{ 
		\begin{array}{lclr}
			 x^2 y' & = & y-x & (a)
			 \\ 
			 x^3z' & = &  z(y-x)(1-x) & (b)
		 \end{array} 		 \right. 	 \]
		The differential equation $(a)$ does not depend on $z$. It is the classical Euler differential equation, whose solutions for $x>0$ all {tend to} $0$ and are asymptotic to the formal {power} series 
		\[ \hat{h}(x) := \sum_{n=0}^\infty n! x^{n+1}.\]
		Now, {by} dividing (b) by $x^3$ and since $y-x=x^2y'$, we get {that}
		\[ z'(x) = z(x)\left(\frac{1}{x}+\left(\frac{y(x)}{x}\right)'\right),\]
		which gives $z(x)=A xe^{\frac{y(x)}{x}}$ for some $A\in\mathbb R$.
		
		{In particular, if $A=1$, then}, for any choice $h(x),\; x>0,$ of a solution of $(a)$,
		the image of $x\mapsto (x, h(x), xe^{\frac{h(x)}{x}})$ is a trajectory of $\xi$ at $0$, 
		{which} is asymptotic to the axis $\mathcal C^+$ associated {to} the parameterized formal curve 
		\[ \displaystyle \mathcal C(t) := \left(t, \hat{h}(t), te^{{\hat{h}(t)}/{t}}\right),\]
		and contained in $\{x>0\}$.
		(The series $te^{{\hat{h}(t)}/{t}}$ is obtained from $e^{1+t}=e+et+e\frac{t^2}{2}+\dots$, as $\hat h(t)/t \in 1+t\mathbb R[[t]]$.) So the pencil $\mathcal P_{\mathcal C^+}$ contains a ({nonempty}) family of trajectories, parameterized by the choice of a solution of (a).
		
		Since $\hat{h}$ is divergent, the minimal dimension $s$ of a subanalytic set that contains $\mathcal C^+$
		is at least $2$. {Moreover,} since the subanalytic surface \[ S=\{(x,y,z)\in\mathbb R^3;\; z=xe^{\frac{y}{x}},\; 0<y<3x\}\]
		contains the trajectories of $\mathcal P_{\mathcal C^+}$, $\mathcal C^+\subset S$, {we have that} $s=2$. In particular,
		$\mathcal C^+$ is not subanalytically transcendental.
		
		It {however} happens that $\mathcal C^+$ is analytically transcendental. Indeed, if $X$ is analytic and contains $\mathcal C^+$, the intersection $S\cap X$ is a subanalytic set that contains $\mathcal C^+$, so  $\dim(S\cap X)\ge 2$, therefore $S\subset X$. But $S$ is not included in any analytic set but $\mathbb R^3$ near $0$ (this is a variation of a classical example due to Osgood, see \cite[Example 2.14]{Bie-M}). Indeed, if $f$ is analytic at $0$ and vanishes on $S$, 
		then $f(x,xy,xe^{y})$ vanishes on an open set {close} to $0$, then everywhere. Assuming {that} $f(x,y,z)=\sum_{(i,j,k)\in\mathbb N^3} f_{ijk}x^iy^jz^k$,
		this can be rewritten {as}  \[ f(x,xy,xe^{y}) = \sum_{n\in\mathbb N}x^n\sum_{i+j+k=n} f_{ijk}\,y^j\,(e^y)^k =0,\] which implies {that} all $f_{ijk}$ are zero {since} $e^y$ is transcendental over $\mathbb R[y]$.
	\end{example}

	In the {previous} example, the formal curve $\mathcal{C}$ is analytically transcendental but its transform {by}
	the blow-up at the origin is not analytically transcendental anymore. The following lemma clarifies the relation between subanalytic and analytic transcendence in terms of blow-ups. {We leave the proof to the reader}. It is a consequence of Hironaka's Rectilinearization Theorem \cite{Hir} which asserts that any subanalytic set can be transformed into a semianalytic set by a finite sequence of local blow-ups with smooth analytic centers.

	\begin{lemma}\label{lm:transcendence}
		Let $\mathcal{C}$ be a formal curve at $0\in\R^n$. The following are equivalent:
		\begin{enumerate}
			\item The curve $\mathcal{C}$ is subanalytically transcendental.
			\item The curve $\mathcal{C}$ is analytically transcendental and its strict transform by any finite sequence of local blow-ups with smooth analytic centers is also analytically transcendental.
		\end{enumerate}
	\end{lemma}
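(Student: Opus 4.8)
The plan is to prove both implications by contraposition, the delicate one being that the failure of (1) forces the failure of (2); this is where Hironaka's Rectilinearization Theorem \cite{Hir} enters. Throughout I would fix a half-branch $\mathcal{C}^{\epsilon}$ of $\mathcal{C}$ and use freely that, for $\mathcal{C}$, being analytically (resp. subanalytically) transcendental is equivalent to the corresponding statement for each of its half-branches, together with the elementary fact (b): for a formal half-branch, being contained — in the sense of containment in a subanalytic set defined in Section \ref{sec:integral-pencils} — in a real analytic hypersurface $\{f=0\}$ is equivalent to the formal identity $f\circ\mathcal{C}=0$. The common technical ingredient I would isolate first is a \emph{stability lemma}: if $\pi\colon M\to(\R^{n},0)$ is a local blow-up with smooth analytic center $Y$ and $\mathcal{C}^{\epsilon}\not\subset Y$, then the strict transform $\wt{\mathcal{C}^{\epsilon}}$ is again a formal half-branch at some point $p\in M$, and for every subanalytic set $A$ with $0\in\overline A$ one has $\mathcal{C}^{\epsilon}\subset A$ if and only if $\wt{\mathcal{C}^{\epsilon}}\subset\wt A$, where $\wt A:=\overline{\pi^{-1}(A\setminus Y)}$. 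I would also record the fact (a): a semianalytic set of positive codimension lies, near any point of its closure, inside the zero set of some non-zero analytic germ.

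For $\neg(1)\Rightarrow\neg(2)$ I would argue as follows. Assume $\mathcal{C}^{\epsilon}\subset S$ with $S$ subanalytic and $\dim S<n$. Apply Rectilinearization to $S$: a finite sequence of local blow-ups with smooth analytic centers, organised chart by chart, has a composition $\Pi$ turning the relevant transform of $S$ into a semianalytic set. I would follow $\mathcal{C}^{\epsilon}$ along this sequence, at each step keeping the chart containing $\wt{\mathcal{C}^{\epsilon}}(0)$. If at some step the current center contains the current transform of $\mathcal{C}^{\epsilon}$, then that transform of $\mathcal{C}$ lies in an analytic set of positive codimension, hence is not analytically transcendental and (2) fails. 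Otherwise the strict transform survives to the end as a half-branch $\wt{\mathcal{C}^{\epsilon}}$, which by the stability lemma is contained in the semianalytic transform $\wt S$ of $S$; since strict transforms and closures do not raise dimension, $\dim\wt S<n$, so by (a) we have $\overline{\wt S}\subset\{f=0\}$ for some non-zero analytic germ $f$, hence $\wt{\mathcal{C}^{\epsilon}}\subset\{f=0\}$ and, by (b), $f\circ\wt{\mathcal{C}}=0$: the transform $\wt{\mathcal{C}}$ is not analytically transcendental. In both cases (2) fails.

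For $\neg(2)\Rightarrow\neg(1)$: if $\mathcal{C}$ is not analytically transcendental it is, a fortiori, contained in a subanalytic set of positive codimension, so (1) fails. Otherwise some strict transform $\wt{\mathcal{C}}$, under a composition $\Pi$ of local blow-ups with smooth analytic centers, lies in an analytic germ $\wt X$ of positive codimension at the relevant point; as $\mathcal{C}$ is analytically transcendental, none of the centers involved contains a transform of $\mathcal{C}$, so $\wt{\mathcal{C}}$ is well defined. Since $\Pi$ restricts to a proper map over a neighbourhood of $0$, the set $X:=\overline{\Pi(\wt X\setminus\Pi^{-1}(0))}$ is subanalytic with $\dim X\le\dim\wt X<n$, and applying the stability lemma ``downwards'' at each blow-up of the sequence turns $\wt{\mathcal{C}^{\epsilon}}\subset\wt X$ into $\mathcal{C}^{\epsilon}\subset X$. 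Thus $\mathcal{C}$ is contained in a subanalytic set of positive codimension and (1) fails.

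I expect the main obstacle to be the stability lemma and its interaction with the multi-chart, local character of Rectilinearization: concretely, verifying that the iterated-tangents sequence of $\mathcal{C}^{\epsilon}$ and the operation of (total) transform of a subanalytic set both commute with blow-ups at smooth analytic centers, and that passing to a suitable chart at each step does not disturb this. Granting that, the remainder is the bookkeeping above together with standard facts on subanalytic and semianalytic sets, for which I would cite \cite{Hir}, \cite{Bie-M} and \cite{Can-Mou-San1, Can-Mou-San2}.
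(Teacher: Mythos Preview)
Your proposal is correct and follows exactly the approach the paper intends: the paper does not actually give a proof of this lemma but simply states that it ``is a consequence of Hironaka's Rectilinearization Theorem'' and leaves the details to the reader. Your two contrapositive arguments, built on the stability lemma for containment under blow-ups and on the fact that a semianalytic set of positive codimension locally lies in an analytic hypersurface, are precisely the details one has to supply, and you have correctly identified the only genuinely delicate point (tracking the half-branch through the local, multi-chart sequence of blow-ups furnished by Rectilinearization).
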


	We now recall the SAT property, introduced in \cite{Rol-San-Sch} for formal solutions of systems of ordinary differential equations. We {first} need the following {notions}. For a given $q\ge 1$,  
	%\textcolor{red}{((I have corrected here: $q=0$ has no sense))} 
	a polynomial $P(x)\in\R[x]$ {is}
	%will be 
	called {\em $q$-short} if $P(0)=0$ and $\text{deg}\,P<(q+1)\text{val}\,P$, where $\text{val}\,P$ {denotes the order of $P$ at $0$}.
	%of the order with respect to $x$. 
	We will say that $P$ 
	is {\em positive} if $P^{(\text{val}\,P)}(0)>0$ (where $P^{(k)}$ is the $k$-th derivative of $P$). Also, as a matter of notation, {for} $k\ge 0$ and $h(x)\in\R[[x]]$ is a formal power series, we {write}
	\[ 
	T_kh(x)=\frac{h(x)-J_kh(x)}{x^k},
	\]
	where $J_kh(x)$ is the truncation of $h(x)$ up to and including degree $k$ (thus, in particular, $T_kh(0)=0$).
	\begin{definition}\label{def:sat}
		Fix $q\in\N_{\ge 1}$. Let $H(x)=(H_1(x),...,H_n(x))\in({x}\R[[x]])^n$.
		%with $H(0)=0$. 
		We say that $H(x)$ is  {\em $q$-strongly analytically transcendent} ($q$-SAT 
		for short, or 
		SAT if $q$ is {given}) if for any $k\in\mathbb N{, l}\in\mathbb N_{\ge 1}$  and for any family of distinct $q$-short positive polynomials $P_1,...,P_l$ we have {that}
		\begin{equation}\label{eq:sat}
			f \Bigl(
			x,T_kH\big(P_1(x)\big),T_kH\big(P_2(x)\big),...,T_kH\big(P_l(x)\big) \Bigr)=0\Rightarrow f\equiv 0
		\end{equation}
		for any convergent series $f(x,Z)\in\R\{x,Z\}$ in $1+nl$ variables.
	\end{definition}

	\begin{remark}\label{rk:sat-under-diffeo}
		{{Condition (\ref{eq:sat})} means that the formal curve $\mathcal C^k_{H,P}(x)$ is analytically transcendental, where $H(x)=(H_1(x),...,H_n(x))$, $P=(P_1,\dots,P_l)$ and 
			\[ \mathcal C^k_{H,P}(x)= \Bigl(x,T_kH\big(P_1(x)\big),T_kH\big(P_2(x)\big),...,T_kH\big(P_l(x)\big)\Bigr).\]
			Furthermore, analytic transcendence is preserved under analytic isomorphism. Indeed, if $\mathcal C(x)\in (x\mathbb R[[x]])^m$ is 
			transcendental and $\Psi:(\mathbb R^m,0)\to(\mathbb R^m,0)$ is analytic and invertible at $0$, then $f\circ\Psi\circ \mathcal C(x)=0$ implies
			$f\circ\Psi =0$, that gives $f=0$ since $\Psi$ is invertible. 
			Together with the fact that a curve
			$\mathcal C^k_{\theta\circ H,P}(x)$ can be expressed as 
			$\mathcal C^k_{\theta\circ H,P}(x)=\Psi(\mathcal C^k_{H,P}(x))$
			with an invertible $\Psi$ if $\theta$ is invertible, 
			this shows that the $q$-SAT property is preserved under analytic isomorphism.}
		
		{The stability of analytic transcendence can be  {pushed} further as follows, and we  {will} use  {this later on}. 
			If $\mathcal C(x)\in (x\mathbb R[[x]])^m$ is transcendental and $\Psi:(\mathbb R^m,0)\to(\mathbb R^m,0)$ is analytic and  invertible in an open set $U$ with $0\in \overline{U}$, then $\Psi\circ \mathcal C(x)$ is transcendental. Indeed, if $f$ is analytic, $f\circ\Psi\circ \mathcal C(x)=0$ still gives 
			$f\circ\Psi =0$, so $f$ vanishes on the image of $\Psi$, which contains an open set close to $0$. So $f=0$ everywhere.
			{We will} call \emph{quasi-isomorphism} an analytic mapping $\Psi:(\mathbb R^m,0)\to(\mathbb R^m,0)$ of the form $\Psi(x,\ww)=(x,\psi(x,\ww))$ where $x\in\mathbb R$, $\ww\in\mathbb R^{m-1}$, and that is invertible in an open set with $0$ in its closure.}
	\end{remark}

	The following proposition shows that if $H$ is $q$-SAT,  {then} the composition of $H$ with distinct $q$-short positive polynomials is subanalytically transcendental.

	\begin{proposition}\label{pro:SAT-pencil}
		{Suppose  {that} $H(x)\in(x\R[[x]])^n$ has the $q$-SAT property} and let \\ $P_1(x),...,P_l(x)$ be distinct $q$-short positive polynomials. Let $\mathcal{C}$ be the 
		formal curve parameterized by 
		\[ 
		\mathcal{C}(x)=\Big(x,H\big(P_1(x)\big),H\big(P_2(x)\big),...,H\big(P_l(x)\big)\Big).
		\]
		Then $\mathcal{C}$ is subanalytically transcendental.
	\end{proposition}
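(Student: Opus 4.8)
The plan is to reduce, via Lemma~\ref{lm:transcendence}, to a statement about analytic transcendence of iterated blow-up transforms, and then to show that the $q$-SAT hypothesis propagates through local blow-ups with smooth analytic centers. More precisely, by Lemma~\ref{lm:transcendence} it suffices to check that $\mathcal C$ is analytically transcendental and that every strict transform of $\mathcal C$ by a finite sequence of local blow-ups with smooth analytic centers is analytically transcendental. The transcendence of $\mathcal C$ itself is immediate: since $H\in(x\R[[x]])^n$ we have $T_0H=H$, so $\mathcal C$ coincides with the curve $\mathcal C^0_{H,P}$ of Remark~\ref{rk:sat-under-diffeo}, and condition~(\ref{eq:sat}) of Definition~\ref{def:sat} applied with $k=0$ says exactly that $f\circ\mathcal C=0$ forces $f\equiv 0$ for every convergent $f$.

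To handle the transforms I would argue by induction on the number of blow-ups, proving the stronger statement that every strict transform of $\mathcal C$ belongs to a distinguished class $\mathcal K$ of formal curves attached to $H$, each member of which is analytically transcendental. The natural candidate for $\mathcal K$ is the family of curves $\Psi\circ\mathcal C^k_{H,Q}$, where $k\in\N$, $Q=(Q_1,\dots,Q_m)$ is a tuple of pairwise distinct $q$-short positive polynomials, and $\Psi$ is a quasi-isomorphism in the sense of Remark~\ref{rk:sat-under-diffeo}; by that remark together with the $q$-SAT property, every such curve is analytically transcendental, and $\mathcal C=\mathcal C^0_{H,P}\in\mathcal K$. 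It may be necessary to enlarge $\mathcal K$ so as to allow entries multiplied by powers of $x$, in order to close it under the operations appearing below, but analytic transcendence of the enlarged members still follows from Remark~\ref{rk:sat-under-diffeo}. The crux is then the closure claim: the strict transform of any curve in $\mathcal K$ by a single local blow-up with smooth analytic center again belongs to $\mathcal K$.

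For the closure claim, fix $\mathcal D=\Psi\circ\mathcal C^k_{H,Q}\in\mathcal K$ and a local blow-up $\pi_Y$ with smooth analytic center $Y$ through $0$. If $\dim Y=2$ then $\pi_Y$ is a local isomorphism near $\mathcal D$ and there is nothing to do; since $\mathcal D$ is analytically transcendental it cannot be contained in $Y$ when $\dim Y\le 1$, so only point-centers and smooth-curve-centers survive. Using that the composition of a quasi-isomorphism with the chart transitions of $\pi_Y$ again factors as a quasi-isomorphism followed by a monomial division map, and that recentering at the new infinitely near point only adds a constant translation, one reduces to understanding how such a monomial division acts on an entry $T_kH_j(Q_r(x))$. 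The model case is the identity $T_kH(x)=x\,T_{k+1}H(x)+(T_kH)'(0)\,x$, which shows that dividing by $x$ and recentering carries $\mathcal C^k_{H,(x)}$ to $\mathcal C^{k+1}_{H,(x)}$; the general case replaces $(x)$ by a tuple of $q$-short positive polynomials and may divide by $x$ or by another entry. Three things must then be verified: (a) the polynomials produced are again $q$-short and positive --- this is precisely the inequality $\deg<(q+1)\,\mathrm{val}$ built into $q$-shortness, designed to survive one division step; (b) pairwise distinct $Q_r$ give pairwise distinct transformed polynomials, after raising $k$ to a common value that separates their truncations; and (c) what remains after the monomial division is a quasi-isomorphism, so that the transformed curve is again of the form $\Psi'\circ\mathcal C^{k'}_{H,Q'}$, possibly in the enlarged class.

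Granting the closure claim, the induction shows that every strict transform of $\mathcal C$ by a finite sequence of local blow-ups with smooth analytic centers lies in $\mathcal K$, hence is analytically transcendental, and Lemma~\ref{lm:transcendence} yields that $\mathcal C$ is subanalytically transcendental. The step I expect to be the main obstacle is the closure computation (a)--(c): following the exponents $k$ and the polynomials $Q_r$ through an arbitrary smooth center of dimension at most one --- distinguishing point versus curve, and for a curve center transverse versus tangent to $\mathcal C$ --- uniformly over the chart decomposition of $\pi_Y$, and checking that $q$-shortness, positivity and distinctness are all preserved; indeed the correct definition of the class $\mathcal K$ is essentially forced by the requirement that this computation go through.
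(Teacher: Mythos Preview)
Your approach is the same as the paper's in outline: reduce via Lemma~\ref{lm:transcendence}, define a class $\mathcal K$ of curves of the form $\Psi\circ\mathcal C^k_{H,Q}$ with $\Psi$ a quasi-isomorphism, and show closure of $\mathcal K$ under a single local blow-up with smooth analytic center. Where you diverge is in expecting the tuple $Q$ of polynomials to change under blow-up, which leads you to flag preservation of $q$-shortness, positivity and distinctness as the main obstacle. The paper avoids this entirely: its three claims show that the polynomials $P_1,\dots,P_l$ can be kept \emph{fixed} throughout, with the blow-up absorbed into (i) an increase of the truncation index $k$ and (ii) composition with a new quasi-isomorphism. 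Concretely, a blow-up carries $(x,c(x))$ to $\Phi\bigl(x,T_k(\psi\circ(x,c(x)))\bigr)$ for quasi-isomorphisms $\Phi,\Psi=(x,\psi)$; then $T_k$ commutes with $\psi$ up to a quasi-isomorphism; and finally $T_k(H_i\circ P_j)(x)=\dfrac{P_j(x)}{x}\,(T_kH_i)(P_j(x))$, where the factor $P_j(x)/x$ is analytic and is again absorbed into a quasi-isomorphism. Thus your worries (a) and (b) never arise, no enlargement of $\mathcal K$ is needed, and the ``division by another entry'' in the tangent-center chart is handled by noting that the relevant entry has the form $x^k(1+u(x))$ with $u(0)=0$, so division by it is again a quasi-isomorphism times $x^{-k}$. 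Once you see that $Q\equiv P$ can be held constant, your proof collapses to the paper's.
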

	\begin{proof}
		By the SAT property, the curve $\CC(x)$ is analytically transcendental. 
		Using Lemma~\ref{lm:transcendence} and Remark~\ref{rk:sat-under-diffeo}, 
		it  {suffices} to show that if $\pi_Z$ is a local blow-up with a smooth analytic 
		center $Z$ at $0\in\R^{1+nl}$ and $\wt{\CC}$ is the strict transform of $\CC$ 
		by $\pi_Z$ then $\wt{\CC}$ can be parameterized  {by}
		\[ 
		\wt{\CC}(x)=\Psi\Big(x,T_kH\big(P_1(x)\big),...,T_kH\big(P_l(x)\big)\Big),
		\]
		where $k$ is a positive integer
		and $\Psi$ {is a quasi-isomorphism.}
		This is easily deduced from the three following facts.
		\begin{claim}\label{5:claim:1}
			If $\mathcal C(x)=(x,c(x))$ is a formal curve and $Z$ a smooth analytic center that does not contain the curve, 
			then the lift $\widetilde{\mathcal C}$ of the curve by the blow-up $\pi_Z$ with center 
			$Z$ has a parameterization of the form 
			\[ \widetilde{\mathcal C}(x)=\Phi\left(x, T_k\left(\psi\circ \mathcal C\left(x\right)\right)\right),\] for some $k> 0$ 
			and some quasi-isomorphisms $\Phi$ and $\Psi=(x,\psi)$.
		\end{claim}
		%$$\pi_Z(x,\Psi_1\circ T_k(\Psi_2\circ \mathcal C(x)))=(x,\mathcal C(x)).$$
		\begin{claim}\label{5:claim:2}
			If $\Psi=(x,\psi)$ is a quasi--isomorphism, and $\mathcal C(x)=(x,c(x))$
			is a formal curve, then the curve $(x,T_k(\psi\circ \mathcal C(x)))$ 
			is the image of the curve $(x,T_k c(x))$ by a quasi-isomorphism $\Phi$:
			\[ (x, T_k(\psi\circ\mathcal C(x)))=\Phi (x,T_k c(x)).\]
		\end{claim}
		\begin{claim}\label{5:claim:3}The curve  $(x,T_k(H\circ P(x)))$, with for short
			\[ H\circ P(x)=(H_i\circ P_j(x),\; {i=1,\dots, n,\; j=1,\dots,l}),\] is the image by a quasi-isomorphism
			$\Psi$ of the curve \[ \big(x,(T_kH_i)\circ(P_j(x)),\; {i=1,\dots, n,\; j=1,\dots,l}\big).\] With the same notation as above: 
			$(x,T_k(H\circ P(x)))=\Psi (x,(T_kH)\circ P(x)).$
		\end{claim}
		To complete the proof from the claims, it  {suffices to note} that Claim \ref{5:claim:1} 
		applies to our curve, as 
		it is analytically transcendental,  {hence} cannot be included in the analytic center $Z$.
		Applying Claim \ref{5:claim:2} then Claim \ref{5:claim:3} to the resulting curve gives exactly our  {assertion}.
		It only remains to prove the claims.
		
		\begin{proof}[Proof of claim \ref{5:claim:1}]
			The introduction of the quasi{-}isomorphism $\Psi$
			allows to make arbitrary choices of analytic coordinates as long as 
			the $x$ variable remains unchanged. 
			We  {consider} whether the curve is tangent to $Z$ or not. If not, we 
			choose a system of coordinates $(x,\yy,\zz)$, $\yy=y_1,\dots, y_m$, 
			$\zz=z_1,\dots,z_r$ (with $m+r=nl$), such that $Z=\{x=g(\zz),\yy=0\}$, and $c(x)=(c_{\yy}(x),c_{\zz}(x))$
			have order at least $2$ ($\mathcal C$ is tangent to the $x$ axis).
			
			We choose local coordinates $(\tilde x,\tilde \yy, \tilde \zz)$ 
			for the blow-up given by 
			%$$\left\{\begin{array}{l}
			$x=\tilde x,
			\yy=(\tilde x-g(\tilde \zz))\tilde \yy,
			\zz=\tilde \zz$
			%\end{array}\right.$$ 
			(so the exceptional divisor has equation $\tilde x= g(\tilde\zz)$). In  {these} coordinates, the lift $\widetilde{\mathcal C}$ of $\mathcal C$ by $\pi_Z$ has parameterization
			$(x,\frac{c_{\yy}(x)}{x-g(c_{\zz}(x))},c_{\zz}(x))$. 
			The function $h(x,\zz)=\frac{1}{1-g(x\zz)/x}$ is analytic in a neighborhood of $0$, so 
			$\widetilde{\mathcal C}$  can be expressed in terms of $T_1c(x)$ as:
			\[ \widetilde{\mathcal C}(x)=(x, \widetilde{c}(x))=(x,h(x,T_1c_{\zz}(x))T_1c_{\yy}(x),xT_1c_{\zz}(x)) = \Phi(x,T_1c(x)),\]
			where $\Phi:(x,\yy,\zz)\mapsto(x,  h(x,\zz)\yy,x\zz)$ is a quasi-isomorphism as announced.
			
			If $Z$ and $\mathcal C(x)$ are tangent, we choose local coordinates 
			$(x,y_1,\yy,\zz)$
			such that $Z=\{y_1=0, \yy=0\}$,
			$c(x)=(c_1(x),c_{\yy}(x),c_{\zz}(x))$, where $c_1(x)=x^k(1+T_kc_1(x))$ has order $k>1$, 
			and $c_{\yy}$, $c_{\zz}$ have order (strictly) larger than $k$.
			In the usual  {coordinate} systems for the blow-up, 
			the lift of $\mathcal C$ by $\pi_Z$ has parameterization
			$(x,c_1(x),\frac{c_{\yy}(x)}{c_1(x)},c_{\zz}(x))$, 
			which can be expressed in terms of $T_kc(x)$:
			\[ \widetilde{\mathcal C}(x)=(x,x^k(1+T_kc_1(x)), \frac{T_kc_{\yy}(x)}{1+T_kc_1(x)}, x^kT_kc_{\zz}(x)).\]
			Since $(x,y_1,\yy,\zz)\mapsto (x, x^k(1+y_1),\frac{\yy}{1+y_1},x^k\zz)$
			is a quasi-isomorphism, we have the desired  {form} for $\widetilde{\mathcal C}(x)$.\end{proof}
		
		\begin{proof}[Proof of claim \ref{5:claim:2}]
			Since $T_{k+1}=T_{k}T_1$, we only treat the case $k=1$.
			We can suppose that $c(x)=xT_1c(x)$ has order at least $2$. 
			Writing $\psi(x,\yy)=\psi_0(x)+\sum_{|\alpha|\ge 1} \psi_{\alpha}(x)\yy^{\alpha}$, (the $\psi_i(x)$'s are vectors of same length as $c(x)$)
			we have 
			\[ T_1(\psi(x,c(x)))=T_1\psi_0(x)+\sum_{|\alpha|\ge 1} \psi_{\alpha}(x)x^{|\alpha|-1}(T_1c(x))^{\alpha},\]
			and we set $\phi(x,\yy)=T_1\psi_0(x)+\sum_{|\alpha|\ge 1} \psi_{\alpha}(x)x^{|\alpha|-1}\yy^{\alpha}$.
			Denote $\Phi(x,\yy)=(x,\phi(x,\yy))$. Then $\Phi$ is a quasi-isomorphism and $(x,T_1(\psi\circ c(x)))=\Psi(x,T_1c(x))$.\end{proof}
		
		\begin{proof}[Proof of claim \ref{5:claim:3}]  {According to} Claim \ref{5:claim:2}, we only need to prove it for $k=1$. We suppose with no loss of generality that $H(x)=xT_1H(x)$ has order greater than $1$, and write \[ T_1(H_i\circ P_j(x))= \frac{1}{x}H_i(P_j(x)) = \frac{H_i(P_j(x))}{P_j(x)}\frac{P_j(x)}{x}= \frac{P_j(x)}{x}(T_1H_i)\circ(P_j(x)).\] Since the polynomials $P_j$ have order at least $1$, 
			the mapping $\Psi$ is a  quasi-isomorphism with
			\[\Psi(x,\ww)=\left(x,\frac{P_j(x)}{x}w_{i,j},\; i=1\dots, n,\; j=1,\dots,l \right),\]
			and where $\ww=(w_{i,j},\; i=1\dots, n,\; j=1,\dots,l)$.\end{proof}
		It concludes the proof of Proposition \ref{pro:SAT-pencil}.
	\end{proof}

	\strut

	Proposition~\ref{pro:SAT-pencil} provides  {us with} examples of subanalytically transcendental curves in any dimension once we have a tuple of formal power series $H(x)$ with the SAT property. 
	In \cite[Theorem 2.4]{Rol-San-Sch},  {the authors} give conditions in order to guarantee that a formal solution of a system of analytic ordinary differential equations has the SAT property. These conditions only concern the eigenvalues of the linearization of the system and the Stokes phenomena of the formal solution. A particular case where their result applies is the well known Euler equation \[ 
	x^2y'=y-x,
	\]
	{whose}
	%for which, its 
	(unique) formal solution $E(x)=\sum_{n\ge0}n!x^{n+1}$ has the $q$-SAT property with $q=1$. This property and Proposition~\ref{pro:SAT-pencil}  {allows} to construct the following examples of non-interlaced integral pencils with subanalytically transcendental axis. Any trajectory of such pencils has the regular separation property by Proposition~\ref{pro:formal-axis}, so
	%that 
	they illustrate the results of Section~\ref{sec:trajectories} and Corollary~\ref{cor:separated-trans-pencil}.
	
	\begin{example}\label{ex:pinceau1}
		{\em
			Consider the vector field
			\[ 
			\xi_1 =  2x^2\frac{\partial}{\partial x} +  2(y-x)\frac{\partial}{\partial y} +(z-2x)\frac{\partial}{\partial z}.
			\]
			The formal curve $\mathcal{C}(x)=(x,E(x),E(2x))$ is invariant by $\xi_1$ and subanalytically transcendental by Proposition~\ref{pro:SAT-pencil}. For any of the two associated half-curves $\mathcal{C}^\epsilon\subset\{\epsilon x>0\}$, $\epsilon\in\{+,-\}$, the integral pencil $\mc P_{\mathcal{C}^\epsilon}$ is separated by \cite[Pinceau final s\'epar\'e, section 4.4]{Can-Mou-San2}, since the eigenvalues $\{0,2,1\}$ of the linear part $d\xi_1(0)$ are all real and distinct. Note also that $x=0$ is the unstable manifold $W^u$ of $\xi_1$ and that $\CC(x)$ is the formal center manifold of $\xi_1$.
			
			%On the other hand,
			The coordinate $x$ grows along any integral curve not contained in $W^u$.
			As a consequence, there exists a unique trajectory of $\xi$ contained in $\{x<0\}$ which accumulates to the origin, so that $\mc P_{\mathcal{C}^-}$ has a unique element (a pencil of ``dimension'' one). 
			{This is a solitary trajectory in the sense of \cite{LG:solitary}}.
			% {(Olivier: peut-etre tu veux dire qu'il s'agit d'une ``solitary trajectory'' et citer ton article...?)}. 
			On the contrary, any trajectory issued  {from} a point in the half-space $\{x>0\}$  accumulates to the origin and its germ belongs to $\mc P_{\mathcal{C}^+}$ (a pencil of ``dimension'' three).\\
		}
	\end{example}
	\begin{example}\label{ex:pinceau2}
		{\em
			The vector field
			\[ 
			\xi_2 = x^2\frac{\partial}{\partial x} + (y-x)\frac{\partial}{\partial y} -(z+x)\frac{\partial}{\partial z}
		\]
			has as formal invariant curve $\mathcal{C}(x)=(x,E(x),E(-x))$.
			This curve is not  {included} in the situation of Proposition~\ref{pro:SAT-pencil} with $H(x)=E(x)$,
			since $P(x)=-x$ is not a positive 1-short polynomial. However, using \cite[Theorem 2.4]{Rol-San-Sch}, 
			we can observe that $H(x)=(E(x),E(-x))$ has the 1-SAT property: $H(x)$ is a formal 
			solution of a system of ODEs for which the linear part has eigenvalues $\{1,-1\}$ with distinct argument, 
			and  $H(x)$ has a nontrivial Stokes phenomenon along both singular directions 
			$\R_{>0}$ and $\R_{<0}$. We obtain by Proposition~\ref{pro:SAT-pencil} 
			that $\mathcal{C}$ is subanalytically transcendental. Since $\mbox{Spec}(d\xi_2(0))=\{0,1,-1\}$, 
			the vector field $\xi_2$ has unique center-stable $W^{cs}$ and center-unstable $W^{cu}$ manifolds, 
			both two-dimensional. Moreover, the germ of any trajectory of $\xi_2$ accumulating to $0$ must 
			be included either in $W^{cs}$ or in $W^{cu}$  (depending if  {it} accumulates to $0$ for 
			$t\to+\infty$ or $t\to-\infty$ for the time parameter $t$). Also, as in Example~\ref{ex:pinceau1}, 
			the coordinate $x$ grows on each trajectory not contained in the invariant surface $\{x=0\}$. 
			Consequently we obtain two separated pencils $\mc P_{\mathcal{C}^+}$, 
			$\mc P_{\mathcal{C}^-}$, associated to the corresponding half-branches 
			$\CC^\epsilon=\CC\cap\{\epsilon x > 0\}$ of $\CC$, both of ``dimension'' 2. 
			More precisely, $Y^-=W^{cs}\cap\{x<0\}$ (respectively $Y^+=W^{cu}\cap\{x>0\}$) 
			{\em realizes} the integral pencil $\mc P_{\CC^-}$ (respectively $\mc P_{\CC^+}$) 
			in the sense that any integral curve in $Y^\epsilon$ is a representative of an 
			element of $\mc P_{\CC^\epsilon}$ and each element of $\mc P_{\CC^\epsilon}$ 
			is contained in $Y^\epsilon$.
			It is worth to remark that neither $W^{cs}$  {nor} $W^{cu}$ is analytic, otherwise $\CC$ would not be analytically transcendental. 
		}
	\end{example}
	
	\begin{example}\label{ex:pinceau3}
		{\em
			Consider the analytic vector field 
			\[ 
			\xi_3 = x^2\frac{\partial}{\partial x} + (y-x)\frac{\partial}{\partial y}+\left(\frac{1+2x}{(1+x)^2} z -\frac{x(1+2x)}{1+x}\right)\frac{\partial}{\partial z}.
			\]
			It has the formal invariant curve $\mathcal{C}(x)=(x,E(x),E(x+x^2))$.
			The polynomial $P(x)=x+x^2$ is not a $1$-short polynomial. It is  {proven} in \cite[Lemme 3.1]{Rol-San-Sch} that there exists an analytic function $f(x,z_1,z_2)\not\equiv 0$ at the origin such that
			\[ 
			f(x,E(x),E(x+x^2))=0.
			\]
			Consequently, $\mathcal{C}\subset S_f:=\{f=0\}$ and it is not analytically transcendental. Note that $S_f$ has dimension two since $\mathcal{C}$ is not convergent. The integral pencil $\mc P_{\mathcal{C}^+}$, where $\mathcal{C}^+$ is the associated half-curve contained in $\{x>0\}$, is separated and contains the germ of any trajectory issued 
			{from} a point in $\{x>0\}$ in a small neighborhood of the origin. However, only those trajectories contained in $S_f$ have the property of regular separation (with respect to the polynomially bounded o-minimal structure $\RR=\R_{\rm an}$ of subanalytic sets).
		}
	\end{example}
	
	{Note} that the vector field $\xi_3$ in Example~\ref{ex:pinceau3} has rational coefficients,  {thus it is also definable} in the structure $\R_{\rm alg}$ of semi-algebraic sets. 
	{We suspect (but we did not prove it) that the surface $S_f$ is not semi-algebraic. If so, the formal curve would be transcendental with respect to semi-algebraic sets, and analogously to Proposition~\ref{pro:formal-axis},
		we could obtain the regular separation property for all trajectories with respect to semi-algebraic functions, 
		and  {apply} Theorem~\ref{th:main}  {for} the polynomially bounded o-minimal structure $\mathbb R_{\rm alg}$. The following example  {makes use of} this principle.}
	
	\begin{example}\label{ex:pinceau4}
		{\em
			Consider the polynomial vector field
			\[ 
			\xi = x^2\frac{\partial}{\partial x} + (y-x)\frac{\partial}{\partial y}+yz\frac{\partial}{\partial z}.
			\]
			For any $\mu\in\R\setminus\{0\}$, the formal curve $\mathcal{C}_\mu(x)=(x,E(x),\mu x\exp(E(x)))$ is invariant. Each $\mathcal{C}_\mu$ is not analytically transcendental since $\mathcal{C}_\mu$ is contained in the analytic surface $S_\mu=\{z-\mu y\exp(y)=0\}$. But $\mathcal{C}_\mu$ is algebraically transcendental: otherwise, if $\mathcal{C}_\mu$  {were} contained in some semialgebraic surface $B$ then $\mathcal{C}_\mu$  {would be} one of the components of the analytic curve $B\cap S_\mu$, which is impossible since $\mathcal{C}_\mu$ is divergent. 
		}
	\end{example}

%------
% Insert acknowledgments and information
% regarding funding at the end of the last
% section, i.e., right before the bibliography.
%------

%\begin{ack}
%We thank X.
%\end{ack}

%\begin{aknowledgment}
\noindent\textbf{Funding.}\\
F. Sanz Sánchez was partially supported by Ministerio de Ciencia, Spain, process MTM2016-77642-C2-1-P  and PID2019-105621GB-I00.
%\end{aknwoledgment}

%------
% Insert the bibliography.
%------

\end{document}